\newtheorem{thm}{Theorem}[section]
\newtheorem{prop}[thm]{Proposition}
\newtheorem{lem}[thm]{Lemma}
\newtheorem{cor}[thm]{Corollary}
\theoremstyle{definition}
\newtheorem{defi}[thm]{Definition}
\newtheorem{rem}[thm]{Remark}
\renewcommand{\epsilon}{\varepsilon}
\renewcommand{\phi}{\varphi}
\newcommand{\ie}{i.e.\@,\@ }
\newcommand{\etal}{et al.\@}
\newcommand{\R}{\mathbb{R}}
\newcommand{\Rd}{\mathbb{R}^d}
\newcommand{\Rtd}{\mathbb{R}^{2d}}
\newcommand{\Z}{\mathbb{Z}}
\newcommand{\T}{\mathbb{T}}
\newcommand{\Qf}{\mathbb{Q}}
\newcommand{\G}{\mathcal{G}}
\newcommand{\F}{\mathcal{F}}
\newcommand{\cK}{\mathcal{K}}
\newcommand{\U}{\mathcal{U}}
\newcommand{\D}{\mathcal{D}}
\newcommand{\Sc}{\mathcal{S}}
\newcommand{\I}{\mathcal{I}}
\newcommand{\A}{\mathcal{A}}
\newcommand{\Q}{\mathcal{Q}}
\newcommand{\RS}{R}
\newcommand{\stft}{short-time Fourier transform}
\newcommand{\tf}{time-frequency}
\newcommand{\fif}{if and only if}
\newcommand{\tfs}{time-frequency shift}
\newcommand{\modsp}{modulation space}
\newcommand*\dif{\mathop{}\!\mathrm{d}}
\DeclareRobustCommand\widecheck[1]{{\mathpalette\@widecheck{#1}}}
\def\@widecheck#1#2{%
    \setbox\z@\hbox{\m@th$#1#2$}%
    \setbox\tw@\hbox{\m@th$#1%
       \widehat{%
          \vrule\@width\z@\@height\ht\z@
          \vrule\@height\z@\@width\wd\z@}$}%
    \dp\tw@-\ht\z@
    \@tempdima\ht\z@ \advance\@tempdima2\ht\tw@ \divide\@tempdima\thr@@
    \setbox\tw@\hbox{%
       \raise\@tempdima\hbox{\scalebox{1}[-1]{\lower\@tempdima\box
\tw@}}}%
    {\ooalign{\box\tw@ \cr \box\z@}}}
\newcommand{\Fhat}{\widehat{\phantom{x}}}  
\newcommand{\Rimp}[2]{(#1)\,$\Rightarrow$(#2)}
\newcommand{\LRimp}[2]{(#1)\,$\Leftrightarrow$(#2)}
\newcommand{\duall}{\Lambda{ ^\circ}}
\def\inv{^{-1}}
\def\rd{\R ^d}
\def\zd{\Z ^d}
\def\td{\T ^d}
\def\rdd{{\R ^{2d}}}
\def\zdd{{\Z ^{2d}}}
\def\lrd{L^2(\rd)}
\def\lrdd{L^2(\rdd)}
\def\intrd{\int_{\rd}}
\def\intrdd{\int_{\rdd}}
\newcommand{\lnrm}{_{\ell^2}}
\newcommand{\Lnrm}{_{L^2}}
\DeclareMathOperator{\vspan}{span}
\DeclareMathOperator{\vol}{vol}
\DeclareMathOperator*{\einf}{ess\,inf}
\DeclareMathOperator{\supp}{supp}
\begin{document}
\begin{abstract}
This chapter offers a systematic and streamlined exposition of the
most important  characterizations of Gabor frames over a lattice. 
\end{abstract}

\title{Gabor Frames: Characterizations  and  Coarse Structure}
\author{Karlheinz Gr\"ochenig}
\address{Faculty of Mathematics \\
University of Vienna \\
Oskar-Morgenstern-Platz 1 \\
A-1090 Vienna, Austria}
\email{karlheinz.groechenig@univie.ac.at}
\author{Sarah Koppensteiner}
\email{sarah.koppensteiner@univie.ac.at}

\subjclass[2000]{}
\date{}
\keywords{}
\maketitle

\section{Introduction}

Given a point $z= (x,\xi ) \in \rdd $ in time-frequency space (phase
space), we define the corresponding time-frequency shift  $\pi (z)$
acting on a function $f\in \lrd $ by
$$
\pi (z) f(t) = e^{2\pi i \xi \cdot t} f(t-x) \, .
$$
Gabor analysis deals with the spanning properties of sets of \tfs
s. Specifically, for a window function $g\in \lrd $ and a discrete set
$\Lambda \subseteq \rdd $, which we will always assume to be a
lattice, we would like to understand when the set
$$
\G (g,\Lambda ) = \{ \pi (\lambda )g: \lambda \in \Lambda \}
$$
is a frame. This means that there exist positive constants $A,B>0$ such that
\begin{equation} \label{eq:fr1}
    A \|f\|\Lnrm^2 
    \leq \sum_{\lambda \in \Lambda} | \langle f, \pi(\lambda) g \rangle |^2 
    \leq B \|f\|\Lnrm^2 \qquad \forall f \in L^2(\Rd) \text{.}
  \end{equation}
For historical reasons a frame with this structure is called a
\emph{Gabor frame}, or sometimes a Weyl-Heisenberg frame.

The motivation for studying sets of \tfs s is in the foundations of
quantum mechanics by J.\ von Neumann~\cite{neumann} and in information
theory by D.\ Gabor~\cite{ga46}. Since 1980 the investigation of
Gabor frames has stimulated the interest of many mathematicians in
harmonic,  complex, and numerical  analysis and engineers in signal
processing and wireless communications.

Whereas \eqref{eq:fr1} expresses a strong form of completeness (with
stability built in the definition), a complementary concept is the
linear independence  of \tfs s. Specifically, we ask for constants
$A,B>0$ such that
\begin{equation}
  \label{eq:c8}
  A \|c\|\lnrm^2 \leq \Big\| \sum _{ \lambda \in \Lambda  } c_\lambda \pi
  (\lambda ) g \Big\|\Lnrm^2 \leq B \|c\|\lnrm^2  \qquad   \forall c\in
  \ell^2(\Lambda ) \, ,
\end{equation}
and  in this case $\G (g,\Lambda )$ is called a Riesz sequence in $\lrd $. Riesz
sequences are important  in wireless communications: a data set $(c_\lambda
)_{\lambda \in \Lambda }$ is transformed into an analog signal $f=
\sum _{\lambda \in \Lambda } c_\lambda \pi (\lambda )g$ and then
transmitted. The task at the receiver is to decode the data
$(c_\lambda )$. In this context \eqref{eq:c8} expresses the fact that
the coefficients $c_\lambda $ are uniquely determined by $f$ and that
their recovery is feasible in a robust way.

In this chapter we restrict our attention to sets of \tfs s over a
lattice $\Lambda $, i.e., $\Lambda = A \zdd $ for an invertible,
real-valued $2d\times 2d$ matrix $A$. The lattice structure implies
the translation invariance $\pi (\lambda ) \G (g,\Lambda ) = \G
(g,\Lambda ) $ (up to
phase factors) and is at the basis of a  beautiful and deep structure
theory and many
characterizations of \eqref{eq:fr1} and \eqref{eq:c8}. 

After three decades we have a clear understanding of the structures
governing Gabor systems.  Our goal is to  collect the most important
characterizations of Gabor frames and  offer a systematic exposition
of these structures. 
In the center of these characterizations is the duality theorem for
Gabor frames. To our knowledge \emph{all} other characterizations
within the $L^2$-theory follow directly from this fundamental
duality. In particular,  the celebrated characterizations of Janssen and
Ron-Shen are  consequences of  the duality theorem, and the
characterization of Zeevi and Zibulski for rational lattices also
becomes a corollary.

Even with this impressive list of different criteria at our disposal,
it remains very difficult to determine whether a given window function
and lattice generate a Gabor frame.  Ultimately, each criterion
(within the $L^2$-theory) is formulated by means of the invertibility
of some operator, and  
proving 
invertibility is always difficult.  This fact explains perhaps why there are so many general
results about Gabor frames, but so few explicit results about concrete
Gabor frames.

Yet, there are some success stories due to Lyubarski~\cite{MR1188007},
Seip~\cite{MR1173117}, Janssen~\cite{MR1955931,MR1964306}, and some
recent progress for totally positive
windows~\cite{MR3053565,grrost17}. All these results have applied some
of the characterizations presented here, or even invented some new
ones.
On the other hand, most questions
about concrete Gabor systems remain unanswered, and so far every
explicit conjecture about Gabor frames (with one exceptation) has been
disproved by counter-examples.

To document some of the many white spots on the  map of Gabor frames,
let us mention two specific examples. (i) Let $g_1(t) = (1-|t|)_+$ be the
hat function (or $B$-spline of order $1$). It is known that for all $\alpha >0$ the Gabor
system $\G (g_1, \alpha \Z \times 2\Z )$ is \emph{not} a Gabor
frame.  But it is not known whether   $\G (g_1,
0.33 \Z \times 2.001 \Z )$ is a frame. (ii) Let $h_1 (t) = t
e^{-\pi t^2}$ be the first Hermite function. It is known that  $\G
(h_1, \alpha \Z \times \beta\Z )$ is \emph{not} a Gabor 
frame whenever  $\alpha \beta = 2/3$~\cite{MR3027914}. But it is not known
whether   $\G (h_1,
\Z \times 0.66666 \Z )$ is a frame. In both cases, there is
numerical evidence that these Gabor systems are frames, but so far
there is no proof despite an abundance of precise criteria to check.

The novelty of our approach  is the streamlined sequence of proofs, so
that most of the structure theory of Gabor
frames fits  into a single, short chapter. In view of dozens of
efforts on every aspect of Gabor analysis, we hope that this survey
will be useful and inspire work on concrete open  questions.   The only prerequisite is
the thorough mastery of the Poisson summation formula and some basic
facts about frames and Riesz sequences. 

The chapter is organized as follows: Section~2 covers the main objects
of Gabor analysis. Section~3 is devoted to the interplay between the
\stft ,  the Poisson summation formula, and commutativity of \tfs
s. The central Section~4 offers a complete proof of the duality
theorem for Gabor frames. Section~5 sketches the main theorems about
the coarse structure of Gabor frames. A list of criteria that are
tailored to rectangular frames is discussed and proved in
Section~6. In Section~7 we derive the criterium of Zeevi and Zibulski
for rational lattices, and Section~8 presents a number of (technically
more advanced) criteria some of which have recently become
useful. Except for the last section, we fully prove all statements.

\section{The Objects of Gabor Analysis}

Let $g \in L^2(\Rd)$ be a non-zero window function and
$\Lambda \subseteq \Rtd$ a lattice.  The set
$\G(g, \Lambda)$ is called a \emph{Gabor frame} if there exist
positive constants $A,B > 0$ such that
\begin{equation}
  \label{eq:frame-ineq}
  A \|f\|\Lnrm^2 
  \leq \sum_{\lambda \in \Lambda} | \langle f, \pi(\lambda) g \rangle |^2 
  \leq B \|f\|\Lnrm^2 \qquad \forall f \in L^2(\Rd) \text{.}
\end{equation}

The frame inequality \eqref{eq:frame-ineq} can be recast by means of
functional analytic properties  of certain operators associated to a
Gabor system $\G (g,\Lambda )$. We will use the \index{frame operator} \index{Gabor frame operator}
\emph{frame operator} $S= S_{g,\Lambda}$ defined by 
\begin{equation*}
  S_{g,\Lambda}f
  = \sum_{\lambda \in \Lambda} \langle f, \pi (\lambda) g  \rangle
  \pi (\lambda) g \text{.}
\end{equation*}
Then $\G (g,\Lambda )$ is 
a frame \fif\ $S_{g,\Lambda }$ is bounded and invertible on $L^2(\rd
)$. The extremal spectral values $A,B$ are called the frame bounds. If
they can be chosen to be equal $A=B$, then the frame operator is a
multiple of the identity, and $\G (g,\Lambda )$ is called a tight
frame. 

We will also use the  \index{Gramian operator} \emph{Gramian operator}
$ G= G_{g, \Lambda}$ defined by its entries
\begin{equation*}
  G_{\lambda \mu} = \langle \pi(\mu) g, \pi(\lambda)g \rangle \text{.}
\end{equation*}
In this notation, $\G (g,\Lambda )$ is a Riesz sequence \fif\
$G_{g,\Lambda }$ is bounded and invertible on $\ell ^2(\Lambda )$. 

If the upper inequality in \eqref{eq:frame-ineq} is satisfied, then
the frame operator is well-defined and  bounded on $L^2(\Rd)$ and the
Gramian operator is bounded on 
$\ell^2(\Lambda)$.  In this case, we call
$\G(g, \Lambda)$ a \emph{Bessel sequence}.

  The underlying
object of this definition is the \emph{\stft} of $f$ with respect to the
window function $g\in \lrd $, which is defined by
\begin{equation*}
  \label{eq:c10}
  V_gf (z) = V_gf(x,\xi ) = \intrd f(t) \bar{g}(t-x) e^{-2\pi i \xi
    \cdot t} \, \dif t \, .
\end{equation*}
We will need the following properties of the \stft .

\begin{lem}[Covariance property]
  \label{sec:short-time-fourier-CovarianceProp}
  Let $f,g\in L^2(\Rd)$ and $w, z\in \rdd $. Then
  \begin{equation}
    \label{eq:2}
    V_g{(\pi (w) f)}(z) = e^{-2\pi i (z_2 - w_2) \cdot w_1}
    V_{g}{f}(z-w) \qquad \text{ and }
  \end{equation}  
  \begin{equation}
    \label{eq:3}
    V_{\pi(w) g}(\pi (w)f)(z) = e^{2\pi i z \cdot \mathcal{I} w }
    V_{g}f (z) \text{,}
  \end{equation}
where $\I =
\left(\begin{smallmatrix}
  0 &  I_d \\
  -I_d & 0
\end{smallmatrix}\right)$ denotes the standard symplectic matrix and
$I_d$ is the $d$-di\-men\-sion\-al identity matrix.
\end{lem}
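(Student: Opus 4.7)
The plan is to prove both identities by direct computation from the definition of the STFT, tracking the complex exponentials carefully; no deeper machinery is required.

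For \eqref{eq:2}, I would substitute $(\pi(w)f)(t) = e^{2\pi i w_2 \cdot t} f(t - w_1)$ into the definition of $V_g$ and combine the exponentials to obtain
\[
V_g(\pi(w)f)(z) = \int_{\R^d} f(t - w_1)\,\bar{g}(t - z_1)\,e^{-2\pi i (z_2 - w_2) \cdot t}\,\dif t.
\]
Then the change of variables $s = t - w_1$ extracts the phase $e^{-2\pi i (z_2 - w_2) \cdot w_1}$, and what remains inside the integral is precisely $V_g f(z_1 - w_1, z_2 - w_2) = V_g f(z - w)$, which gives \eqref{eq:2}.

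For \eqref{eq:3}, I would again proceed by direct substitution. The key observation is that when both $f$ and $g$ are replaced by $\pi(w)f$ and $\pi(w)g$, the modulation factors $e^{2\pi i w_2 \cdot t}$ and $\overline{e^{2\pi i w_2 \cdot (t - z_1)}}$ cancel exactly, leaving only a global phase $e^{2\pi i w_2 \cdot z_1}$ independent of $t$. The remaining integral becomes
\[
\int_{\R^d} f(t - w_1)\,\bar g(t - z_1 - w_1)\,e^{-2\pi i z_2 \cdot t}\,\dif t,
\]
and the substitution $s = t - w_1$ turns this into $e^{-2\pi i z_2 \cdot w_1} V_g f(z)$. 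Combining the two phases yields $e^{2\pi i (z_1 \cdot w_2 - z_2 \cdot w_1)}$, and one finally checks that $\mathcal{I} w = (w_2, -w_1)$ so $z \cdot \mathcal{I} w = z_1 \cdot w_2 - z_2 \cdot w_1$, matching the exponent in \eqref{eq:3}.

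There is no substantive obstacle; the entire proof is a careful bookkeeping exercise with phases. The only thing that needs attention is the sign convention in $\mathcal{I}$ and verifying that the symplectic product $z \cdot \mathcal{I} w$ reproduces exactly the phase produced by combining the shift in $t$ with the modulation in $\xi$. One could alternatively derive \eqref{eq:3} from \eqref{eq:2} by first establishing the composition rule $\pi(z)\pi(w) = e^{-2\pi i w_2 \cdot z_1}\pi(z+w)$ and applying it to $\langle \pi(w)f, \pi(z)\pi(w)g\rangle = V_{\pi(w)g}(\pi(w)f)(z)$, but the direct computation is shorter and avoids introducing auxiliary identities.
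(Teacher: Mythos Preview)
The paper states this lemma without proof, treating it as a standard identity in time-frequency analysis. Your direct computation from the definition of the STFT is correct in every detail: the substitution $s=t-w_1$ in both parts, the cancellation of the $t$-dependent modulation factors in \eqref{eq:3}, and the verification that $\mathcal{I}w=(w_2,-w_1)$ so that $z\cdot\mathcal{I}w=z_1\cdot w_2-z_2\cdot w_1$. This is precisely the routine bookkeeping one expects for such a result, and nothing is missing.
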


\begin{prop}[Orthogonality relations]
  \label{sec:short-time-fourier-OrthoRel}
  Let $ f, g, h , \gamma \in L^2(\Rd)$.
  \begin{enumerate}[(i)]
  \item Then $V_g f, V_\gamma h \in L^2(\Rtd)$ and 
    \begin{equation}
      \label{eq:short-time-fourier-OrthoRel}
      \langle V_g f, V_\gamma h \rangle_{L^2(\Rtd)} = \langle f, h \rangle_{L^2(\Rd)}
      \overline{\langle g, \gamma \rangle}_{L^2(\Rd)} \text{.}
    \end{equation}
    In particular, if $\|h\|\Lnrm = 1$, then $V_h$ is an isometry from
    $\lrd $ to $\lrdd $.
  \item Furthermore, for all $z \in \Rtd$,
    \begin{equation}
      \label{eq:modulation-spaces-eq4}
      \left( V_{g}{f} \cdot \overline{V_{\gamma}{h}} \right)\! \Fhat (z) 
      = \left(V_{g}{\gamma } \cdot \overline{V_ hf} \right)\!(\mathcal{I} z) \text{.}
    \end{equation}
  \end{enumerate}
\end{prop}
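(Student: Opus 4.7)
For part (i), my plan is to exploit the observation that, for each fixed $x\in\Rd$, the STFT is a (partial) Fourier transform in the time variable: $V_g f(x,\xi) = \mathcal F_t\{f \cdot \overline{T_x g}\}(\xi)$, where $T_x g(t) = g(t-x)$. Since $f\cdot\overline{T_x g}\in L^2(\Rd)\cap L^1(\Rd)$ (the latter by Cauchy--Schwarz), Plancherel's theorem in the $\xi$-variable yields, for each $x$,
\[
\int_{\Rd} V_g f(x,\xi)\,\overline{V_\gamma h(x,\xi)}\, d\xi
= \int_{\Rd} f(t)\overline{h(t)}\,\overline{g(t-x)}\gamma(t-x)\, dt.
\]
I would then integrate in $x$ and invoke Fubini. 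The inner integral $\int \overline{g(t-x)}\gamma(t-x)\, dx$ is independent of $t$ by translation invariance of Lebesgue measure and equals $\overline{\langle g,\gamma\rangle}$; it factors out and leaves $\langle V_g f, V_\gamma h\rangle = \langle f, h\rangle\,\overline{\langle g,\gamma\rangle}$. The isometry claim follows by specializing $f=h,\ g=\gamma=h$ with $\|h\|\Lnrm = 1$. The only slightly delicate point is the Fubini step, which can be justified either by a direct absolute-convergence estimate via Cauchy--Schwarz, or by first assuming $f,g,h,\gamma\in\mathcal S(\Rd)$ and extending to $L^2$ by density.

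For part (ii), the plan is a direct computation. Writing $z=(z_1,z_2)$ and substituting the integral definitions of $V_g f$ and $\overline{V_\gamma h}$ into the Fourier transform, one obtains a fourfold integral in $(x,\xi,t,s)$. Integrating in $\xi$ first yields a Dirac delta pinning $s = t + z_2$, which is used to eliminate the $s$-variable. After the substitution $y = t-x$ the remaining double integral decouples as a product of two one-dimensional integrals in $t$ and $y$. Each factor can be recognized, up to an explicit phase, as an STFT evaluated at the symplectically rotated point $\mathcal Iz = (z_2, -z_1)$. Absorbing the phases by means of the covariance identities of Lemma~\ref{sec:short-time-fourier-CovarianceProp} — in particular, the involutive symmetry $V_g f(x,\xi) = e^{-2\pi i x\xi}\,\overline{V_f g(-x,-\xi)}$, which follows from the definition of $V_g f$ by a single change of variables — then matches the right-hand side.

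The principal obstacle in both parts is technical rather than conceptual: for (i), the Fubini/density argument; for (ii), the bookkeeping of the phase factors $e^{\pm 2\pi i z_1 z_2}$ produced by the substitutions. The appearance of $\mathcal I z$ on the right-hand side is exactly what these phases yield once reinterpreted through the covariance property, so the symplectic rotation is really a shadow of Lemma~\ref{sec:short-time-fourier-CovarianceProp} rather than a mysterious feature of the identity. I would expect the entire computation for (ii) to fit in half a page.
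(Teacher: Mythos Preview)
Your approach to (i) is exactly the Plancherel argument the paper defers to the textbooks, so there is nothing to compare there.

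For (ii) your route is correct but genuinely different from the paper's. The paper proceeds in one stroke: it uses the covariance identity~\eqref{eq:3} together with $\mathcal{I}^2=-I_{2d}$ to rewrite the integrand as
\[
V_gf(w)\,\overline{V_\gamma h(w)}\,e^{-2\pi i w\cdot z}
= V_{\pi(\mathcal{I}z)g}\big(\pi(\mathcal{I}z)f\big)(w)\,\overline{V_\gamma h(w)},
\]
and then applies part (i) directly to obtain $\langle \gamma,\pi(\mathcal{I}z)g\rangle\,\overline{\langle h,\pi(\mathcal{I}z)f\rangle}=V_g\gamma(\mathcal{I}z)\,\overline{V_fh(\mathcal{I}z)}$. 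Thus (ii) is displayed as an immediate corollary of (i) once the Fourier kernel has been absorbed into a time-frequency shift; this also explains \emph{why} the symplectic rotation $\mathcal{I}z$ appears. Your direct expansion reaches the same endpoint by brute force, and the phase bookkeeping does come out as you predict (the $e^{\pm 2\pi i z_1\cdot z_2}$ factors from the two decoupled integrals cancel). The trade-off is that the paper's argument is shorter and structural, while yours is more elementary in that it needs only the involutive symmetry of the STFT rather than the full identity~\eqref{eq:3}. One caution: your ``Dirac delta from integrating in $\xi$'' is a formal step, since that $\xi$-integral is not absolutely convergent; to make it rigorous you should repeat the Plancherel manoeuvre from (i), now applied to $f\cdot T_x\bar g$ against the translate $T_{-z_2}(h\cdot T_x\bar\gamma)$.
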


\begin{proof}
  (i) The orthogonality
  relations~\eqref{eq:short-time-fourier-OrthoRel} are a well
  established fact from representation theory. For a direct proof
  using only Plancherel's theorem we refer to the
  textbooks~\cite{daubechies92,MR1843717}.

  (ii) By the Cauchy-Schwarz inequality the product
  $V_{g}{f} \cdot \overline{V_{\gamma}{h}}$ is in $L^1(\Rtd)$,
  therefore the Fourier transform is defined pointwise, and we obtain
  \begin{align*}
    \left( V_{g}{f} \cdot \overline{V_{\gamma}{h}} \right)\!\Fhat (z) &=
    \int_{\Rtd} V_{g}{f}(w) \cdot \overline{V_{\gamma}{h}(w)}
    e^{-2 \pi i w \cdot z}\dif w\\
    & = \int_{\Rtd} V_{\pi
        (\mathcal{I}z)g}\big(\pi
        (\mathcal{I}z)f\big)(w) \cdot \overline{V_{\gamma}
      h(w)}\dif w \\
    & = \langle \gamma  ,
      \pi (\mathcal{I}z) g \rangle  \overline{\langle h,
      \pi (\mathcal{I}z)f \rangle } \text{,}
  \end{align*}
  where we first used $\mathcal{I}^2 = - I_{2d}$ and the covariance
  property \eqref{eq:3}, then the orthogonality
  relations~\eqref{eq:short-time-fourier-OrthoRel} to separate the
  integral into two inner products.
\end{proof}

\section{Commutation Rules and the Poisson Summation Formula in Gabor 
  Analysis}

In this section we exploit the  invariance properties of a Gabor system
for  the structural interplay between the 
\stft\ and \tf\ lattices.

\subsection{Poisson Summation Formula}
If $\Lambda $ is a lattice, then the function
$\Phi (z) = \sum _{\lambda \in \Lambda } |\langle f, \pi (z+\lambda )g
\rangle |^2$
satisfies $\Phi (z+\nu ) = \Phi (z)$ for $\nu \in \Lambda $ and thus
is periodic with respect to $\Lambda $. It is therefore natural to
study the Fourier series of $\Phi $.  The mathematical tool is the
Poisson summation formula, and this is in fact the mathematical core
of all existing characterizations of Gabor frames over a lattice
(though the terminology is often a bit different, e.g., fiberization
technique in~\cite{MR1460623}).

We formulate the Poisson summation formula explicitly for an arbitrary
lattice $\Lambda = A \zdd $ where $A$ denotes an invertible, real-valued
$2d \times 2d$ matrix.  We write $\Lambda ^\perp = (A^T)\inv \zdd $
for the dual lattice and $\Lambda ^\circ = \mathcal{I} \Lambda ^\perp$
for the adjoint lattice with 
$\I =
\left(\begin{smallmatrix}
  0 &  I_d \\
  -I_d & 0
\end{smallmatrix}\right)$.

The \emph{volume} of the lattice is 
$\vol{\Lambda} = |\det(A)|$, and   the reciprocal value
$D(\Lambda)=\vol(\Lambda)^{-1}$ is  the \emph{density} or
\emph{redundancy} of $\Lambda $.

We first formulate a sufficiently general version of the Poisson
summation formula~\cite{MR0304972}. 

\begin{lem}
  Assume that $\Lambda = A \zdd $ and $F\in L^1(\rdd )$. Then the
  periodization
  $\Phi (x) = \sum _{\lambda \in \Lambda } F(x-\lambda )$ is in
  $L^1(\rdd / \Lambda )$.

  \begin{enumerate}[(i)]
  \item The Fourier coefficients of $\Phi $ are given by
    $\hat{\Phi }(\nu ) = \hat{F}(\nu )$ for all
    $\nu \in \Lambda ^\perp $.

  \item Poisson summation formula -- general version: Let $K_n$ be a
    summability kernel~\footnote{It suffices to take the Fourier
      coefficients of the multivariate Fejer kernel
      $\hat{F}_n(k) = \prod _{j=1}^d \big(1 - \frac{|k_j|}{n+1} \big)$
      and set $K_n (\nu ) = \hat{F}_n(A^T \nu) = \hat{F}_n(k)$ for
      $\nu = (A^T)\inv k \in \Lambda ^\perp $ }, then
    \begin{equation*}
      \label{eq:c1}
      \sum_{\lambda \in \Lambda} F(z+\lambda)
      = \vol(\Lambda)^{-1}\lim _{n\to \infty } \sum_{\nu \in \Lambda^{\perp}}  K_n(\nu )\hat{F}(\nu)  \,
      e^{2\pi i \nu \cdot z} \, .
    \end{equation*}
    with convergence in $L^1(\rdd / \Lambda )$.

  \item  If
    $(\hat{F}(\nu ))_{\nu \in \Lambda ^\perp } \in \ell ^1(\Lambda
    ^\perp )$,
    then the Fourier series converges absolutely and $\Phi $ coincides
    almost everywhere with a continuous function. 
  \end{enumerate}
\end{lem}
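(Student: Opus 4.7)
The plan is to reduce everything to Tonelli's theorem and standard Fejér summation on the compact torus $\R^{2d}/\Lambda$. First, to see that $\Phi \in L^1(\R^{2d}/\Lambda)$, I would fix a measurable fundamental domain $D$ for $\Lambda$ (for instance $D=A[0,1)^{2d}$) and note that $\{D-\lambda:\lambda\in\Lambda\}$ tiles $\R^{2d}$ up to a null set. By Tonelli,
\[
\int_D \sum_{\lambda\in\Lambda}|F(x-\lambda)|\,dx \;=\; \sum_{\lambda\in\Lambda}\int_{D-\lambda}|F(y)|\,dy \;=\; \|F\|_{L^1(\R^{2d})} < \infty,
\]
which both shows that the defining series for $\Phi$ converges absolutely almost everywhere and yields $\|\Phi\|_{L^1(\R^{2d}/\Lambda)} \leq \|F\|_{L^1}$.

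For (i), I would identify the Fourier coefficients of $\Phi$ explicitly. If $\nu=(A^T)\inv m\in\Lambda^\perp$ and $\lambda=Ak\in\Lambda$, then $\nu\cdot\lambda=m\cdot k\in\Z$, so the character $e^{-2\pi i\nu\cdot x}$ is $\Lambda$-periodic. Using this periodicity, Fubini (justified by the bound above), and the change of variables $y=x-\lambda$,
\[
\int_D \Phi(x)\,e^{-2\pi i\nu\cdot x}\,dx \;=\; \sum_{\lambda\in\Lambda}\int_{D-\lambda}F(y)\,e^{-2\pi i\nu\cdot y}\,dy \;=\; \int_{\R^{2d}} F(y)\,e^{-2\pi i\nu\cdot y}\,dy \;=\; \hat F(\nu),
\]
which is the asserted identity $\hat\Phi(\nu)=\hat F(\nu)$ in the unnormalized Fourier-coefficient convention of the statement.

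For (ii), I would regard $\Phi$ as an element of $L^1$ on the compact abelian group $\R^{2d}/\Lambda$, equipped with Lebesgue measure inherited from $D$ (total mass $\vol(\Lambda)$), and invoke the standard fact that convolution of $\Phi$ with a positive summability kernel on the torus converges to $\Phi$ in $L^1$. The specific kernel indicated in the footnote is the pullback of the multivariate Fejér kernel under the map $A^T$, designed precisely so that the corresponding Fejér sum on $\R^{2d}/\Lambda$ equals
\[
\vol(\Lambda)^{-1}\sum_{\nu\in\Lambda^\perp}K_n(\nu)\,\hat\Phi(\nu)\,e^{2\pi i\nu\cdot z},
\]
with the prefactor $\vol(\Lambda)^{-1}$ arising from the normalization between Haar measure on $\R^{2d}/\Lambda$ and Lebesgue measure on $D$. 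Substituting $\hat\Phi(\nu)=\hat F(\nu)$ from (i) delivers the formula.

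Finally, (iii) follows because $\ell^1$-summability of $(\hat F(\nu))_{\nu\in\Lambda^\perp}$ makes the trigonometric series converge absolutely and uniformly to a continuous $\Lambda$-periodic function $\Psi$; its Fejér means converge to it uniformly, hence in $L^1$, and comparison with the $L^1$ limit identified in (ii) forces $\Psi=\Phi$ almost everywhere. The main obstacle throughout is purely one of bookkeeping: reconciling the unnormalized Euclidean Fourier transform $\hat F$ with the Fourier-coefficient convention on $\R^{2d}/\Lambda$ and keeping the factor $\vol(\Lambda)^{-1}$ correctly placed. Once the conventions are pinned down, each part is a direct consequence of Tonelli's theorem and the standard Fejér summation on a multidimensional torus.
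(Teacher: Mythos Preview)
Your proof is correct and follows the standard route (Tonelli for the periodization, direct computation of Fourier coefficients via tiling, Fej\'er summation on the compact group $\R^{2d}/\Lambda$, and uniqueness of $L^1$-limits for the $\ell^1$ case). The paper does not actually prove this lemma; it is stated as a known formulation of the Poisson summation formula with a reference to Stein--Weiss, so there is no in-paper argument to compare against. Your write-up is exactly the kind of verification one would extract from that reference, and your remark about the normalization is on point: with the unnormalized convention $\hat\Phi(\nu)=\int_D\Phi(x)e^{-2\pi i\nu\cdot x}\,dx$, the inversion carries the prefactor $\vol(\Lambda)^{-1}$, which is why it appears in (ii).
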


By applying the Poisson summation formula to the function $V_gf \cdot
\overline{V_\gamma h}$ and a lattice $\Lambda $, we obtain an
important identity for the analysis of  Gabor frames. 
This technique is so ubiquitous in Gabor analysis, that
Janssen~\cite{MR1601115} and later Feichtinger and
Luef~\cite{MR2264211} called it \emph{the} ``Fundamental Identity of
Gabor Analysis''.


\begin{thm}
  \label{sec:fund-ident-gabor-FIGA}
  Let $f,g, h, \gamma \in L^2(\Rd)$, and $\Lambda = A
  \Z^{2d}$ be a lattice.
  \begin{enumerate}[(i)]
  \item Then
    \begin{equation}
      \label{eq:FIGA}
       \sum_{\lambda \in \Lambda} V_{g}f(z+\lambda) \overline{V_{\gamma}h(z+\lambda)}
      = \vol(\Lambda)^{-1} \lim _{n\to \infty } \sum_{\mu \in
        \Lambda^\circ} K_n (-\I \mu ) V_{g}\gamma(\mu)
      \overline{V_{f}h(\mu)} e^{2\pi i \mu \cdot \I z }
    \end{equation}
    with convergence in $L^1(\rdd / \Lambda )$. 
  
  \item Assume in addition that both $\G (g,\Lambda )$ and
    $\G (\gamma ,\Lambda )$ are Bessel sequences and that
    $\sum _{\mu \in \duall} |V_g\gamma (\mu ) | < \infty $. Then 
    \begin{equation}
      \label{eq:FIGAptw}
      \sum_{\lambda \in \Lambda} V_{g}f(z+\lambda) \overline{V_{\gamma}h(z+\lambda)}
      = \vol(\Lambda)^{-1} \sum_{\mu \in
        \Lambda^\circ}  V_{g}\gamma(\mu)
      \overline{V_{f}h(\mu)} e^{2\pi i \mu \cdot \I z} \quad \forall z \in \Rtd \text{.}
    \end{equation}
  \end{enumerate}
\end{thm}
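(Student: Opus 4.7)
The plan is to apply the general Poisson summation formula of the preceding lemma to the function $F = V_g f \cdot \overline{V_\gamma h}$. By the orthogonality relations (Proposition~\ref{sec:short-time-fourier-OrthoRel}(i)), both $V_g f$ and $V_\gamma h$ lie in $L^2(\Rtd)$, so the Cauchy--Schwarz inequality puts $F$ in $L^1(\Rtd)$, which is exactly the hypothesis needed. The periodization of $F$ over $\Lambda$ is the left-hand side of \eqref{eq:FIGA}. The Fourier coefficients on $\Lambda^\perp$ are supplied by part (ii) of the same proposition, yielding $\hat F(z) = V_g\gamma(\I z)\,\overline{V_f h(\I z)}$. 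I then reindex via $\mu = \I\nu$, so that $\mu$ ranges over $\Lambda^\circ = \I\Lambda^\perp$ as $\nu$ ranges over $\Lambda^\perp$. The relation $\I^2 = -I_{2d}$ gives $\nu = -\I\mu$, hence $K_n(\nu) = K_n(-\I\mu)$, while the antisymmetry $\I^T = -\I$ yields
\[
  e^{2\pi i \nu \cdot z} \;=\; e^{-2\pi i (\I\mu)\cdot z} \;=\; e^{2\pi i \mu \cdot \I z}.
\]
Substituting everything into the Poisson formula produces \eqref{eq:FIGA} with convergence in $L^1(\Rtd/\Lambda)$.

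\textbf{Part (ii).} Here the extra hypotheses upgrade the identity to pointwise equality. First, the trivial bound $|V_f h(\mu)| \le \|f\|\Lnrm \|h\|\Lnrm$ combined with $\sum_{\mu \in \duall}|V_g\gamma(\mu)| < \infty$ forces $(\hat F(\nu))_{\nu \in \Lambda^\perp}$ into $\ell^1(\Lambda^\perp)$. Part (iii) of the Poisson lemma then tells us that the Fourier series in \eqref{eq:FIGAptw} converges absolutely to a continuous function equal almost everywhere to the periodization. It remains to show that the left-hand side of \eqref{eq:FIGAptw}, interpreted as a pointwise sum, is itself continuous in $z$. Covariance gives $\pi(z+\lambda) = \phi(z,\lambda)\,\pi(z)\pi(\lambda)$ with the \emph{same} phase $\phi(z,\lambda)$ entering both inner products, so these phases cancel in the product and
\[
  \sum_{\lambda \in \Lambda} V_g f(z+\lambda)\overline{V_\gamma h(z+\lambda)} \;=\; \langle S_{g,\gamma,\Lambda}\,\pi(z)^*f,\;\pi(z)^*h\rangle,
\]
where $S_{g,\gamma,\Lambda}u = \sum_\lambda \langle u,\pi(\lambda)g\rangle\,\pi(\lambda)\gamma$ is the mixed frame operator. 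The Bessel hypotheses on $\G(g,\Lambda)$ and $\G(\gamma,\Lambda)$ make $S_{g,\gamma,\Lambda}$ bounded on $L^2(\Rd)$ (by Cauchy--Schwarz on the defining series), and combined with strong continuity of $z \mapsto \pi(z)^*$ on $L^2$ this gives continuity of the left-hand side. Two continuous periodic functions agreeing almost everywhere are equal pointwise.

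\textbf{Expected obstacle.} The bookkeeping in (i) is routine once the orthogonality relations are in hand, the only subtlety being the symplectic change of variables between $\Lambda^\perp$ and $\Lambda^\circ$ and the corresponding sign in the exponential. The delicate point is the upgrade in (ii): the Bessel hypothesis enters essentially through the boundedness of the mixed frame operator, without which the Poisson machinery alone delivers only an a.e.\ identity and pointwise equality cannot be asserted.
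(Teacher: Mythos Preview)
Your proof is correct and follows essentially the same route as the paper: Poisson summation applied to $F=V_gf\cdot\overline{V_\gamma h}$ together with Proposition~\ref{sec:short-time-fourier-OrthoRel}(ii) for the Fourier coefficients in part~(i), and in part~(ii) the phase-cancellation rewriting of the left-hand side combined with the Bessel hypothesis to obtain continuity. The only cosmetic difference is that the paper expresses the left-hand side as the $\ell^2(\Lambda)$-pairing of the two analysis maps $z\mapsto V_g(\pi(-z)f)|_\Lambda$ and $z\mapsto V_\gamma(\pi(-z)h)|_\Lambda$ (each continuous by the Bessel bound), whereas you package the same computation through the bounded mixed frame operator $S_{g,\gamma,\Lambda}$; the underlying estimate is identical.
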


\begin{proof}
  (i) We apply the Poisson summation formula to the product
  $V_gf \cdot \overline{V_\gamma h}$ and the lattice $\Lambda $ and
  obtain
  \begin{align*}
    \sum_{\lambda \in \Lambda} V_{g}f(z+\lambda) \overline{V_{\gamma}h(z+\lambda)}
    & = \vol(\Lambda)^{-1} \lim _{n\to \infty }  \sum_{\nu \in
      \Lambda^\perp} K_n(\nu ) 
      \left(V_{g}f \cdot \overline{V_{\gamma}h} \right)\! \Fhat (\nu)
      \, e^{2\pi i \nu \cdot z} \\
    & = \vol(\Lambda)^{-1} \lim _{n\to \infty } \sum_{\nu \in
      \Lambda^\perp} K_n(\nu ) 
      \left(V_{g}{\gamma} \cdot \overline{V_{f}{h}}\right)\!(\I \nu)  \, e^{2\pi i \nu \cdot z} \\
    & = \vol(\Lambda)^{-1} \lim _{n\to\infty } \sum_{\mu \in
      \Lambda^\circ} K_n(-\I \mu ) 
      V_{g}{\gamma}(\mu) \overline{V_{f}{h}(\mu)} \,  \, e^{2\pi i
      \mu \cdot \I z} \, ,
  \end{align*}
  where we used 
  Proposition~\ref{sec:short-time-fourier-OrthoRel} to rewrite the
  Fourier transform in the first line.

  (ii) If $\sum _{\mu \in \duall} |V_g\gamma (\mu)| <\infty $, then
  the right-hand side of \eqref{eq:FIGAptw} converges absolutely to a
  continuous function, and we do not need the summability kernel.
  Next we rewrite the left-hand side with the help of
  identity~\eqref{eq:2} as
  $$
  \Phi (z) = \sum _{\lambda \in \Lambda } V_g(\pi (-z)f)(\lambda )
  \overline{V_\gamma(\pi (-z)h)(\lambda ) }\, ,
  $$
  where, as so often, the phase factors cancel. Since $\G(g,\Lambda )$
  is a Bessel sequence with Bessel bound $B_g$, we know that
  \begin{equation*}
    \|V_g (\pi (-z)f - f)|_\Lambda \|\lnrm \leq B_g^{1/2} \|\pi (-z)f -
    f \|\Lnrm \text{.}
  \end{equation*}
  This means that the map $z\mapsto V_g(\pi (-z)f)|_\Lambda $ is
  continuous from $\R $ to $\ell ^2(\Lambda )$ for all $f\in \lrd $.
  Likewise, the map $z \mapsto V_\gamma(\pi (-z)h)|_\Lambda $ is 
  continuous for all $h \in \lrd$.

  This observation implies that the left-hand side is also a
  continuous function. Thus both sides of \eqref{eq:FIGAptw}  are continuous and
  coincide almost everywhere, therefore 
  \eqref{eq:FIGAptw} must hold everywhere. 
\end{proof}

\subsection{Commutation Rules}
In the fundamental identity \eqref{eq:FIGA} the adjoint lattice $\duall $
appears as a consequence of the Poisson summation formula. We now
present a more structural property of the adjoint lattice. 

\begin{lem}
\label{sec:commutation-rules-cadjoint}
Let $\Lambda \subseteq \rdd $ be a lattice. Then its adjoint lattice
is characterized by the property 
\begin{equation*}
  \label{eq:FIGA-adjoint-Char}
  \Lambda^\circ = \{ \mu \in \Rtd : \pi(\lambda) \pi(\mu)
  = \pi(\mu) \pi(\lambda) \qquad \forall \lambda \in \Lambda \} \text{.}
\end{equation*}
\end{lem}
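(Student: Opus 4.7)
The plan is to reduce the claim to a concrete algebraic condition on $\mu$ by computing the commutator of two \tfs s explicitly, and then to recognize this condition as the defining property of the adjoint lattice $\Lambda^\circ = \mathcal{I}\Lambda^\perp$.

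First I would derive the composition rule for \tfs s by direct calculation: applying both sides to an arbitrary $f \in L^2(\R^d)$, one verifies
\begin{equation*}
  \pi(z)\pi(w) = e^{-2\pi i\, z_2 \cdot w_1}\, \pi(z+w) \text{,}
\end{equation*}
and swapping $z \leftrightarrow w$ gives $\pi(w)\pi(z) = e^{-2\pi i\, w_2 \cdot z_1}\,\pi(z+w)$. Dividing the two expressions yields the Weyl commutation relation
\begin{equation*}
  \pi(z)\pi(w) = e^{-2\pi i\, z \cdot \mathcal{I} w}\, \pi(w)\pi(z) \text{,}
\end{equation*}
where I use $z \cdot \mathcal{I} w = z_1 \cdot w_2 - z_2 \cdot w_1$. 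In particular, $\pi(\lambda)\pi(\mu) = \pi(\mu)\pi(\lambda)$ if and only if $\lambda \cdot \mathcal{I}\mu \in \Z$.

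Second, I would translate the condition ``$\lambda \cdot \mathcal{I}\mu \in \Z$ for all $\lambda \in \Lambda$'' into a lattice membership. Writing $\Lambda = A\zdd$, this reads $(Am) \cdot \mathcal{I}\mu = m \cdot (A^T \mathcal{I}\mu) \in \Z$ for every $m \in \zdd$, which is equivalent to $A^T \mathcal{I}\mu \in \zdd$, that is, $\mathcal{I}\mu \in (A^T)^{-1}\zdd = \Lambda^\perp$. Applying $\mathcal{I}$ to both sides and invoking $\mathcal{I}^2 = -I_{2d}$ and $-\Lambda^\perp = \Lambda^\perp$ (since $\Lambda^\perp$ is a subgroup), I conclude $\mu \in \mathcal{I}\Lambda^\perp = \Lambda^\circ$. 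Reversing the argument gives the opposite implication.

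The only real obstacle is bookkeeping of signs and phase factors: one must make sure that the convention underlying $\mathcal{I}$ and the definition $\Lambda^\circ = \mathcal{I}\Lambda^\perp$ is consistent with the symplectic form that emerges from the commutator. The identity $\mathcal{I}^2 = -I_{2d}$ combined with $-\zdd = \zdd$ absorbs the sign ambiguity. No functional-analytic input is needed — the lemma is purely an algebraic identity on the Heisenberg group, once the Weyl commutation relation is in hand.
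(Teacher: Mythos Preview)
Your proposal is correct and follows essentially the same route as the paper: compute the Weyl commutation relation, observe that $\pi(\lambda)$ and $\pi(\mu)$ commute \fif\ $\lambda \cdot \I \mu \in \Z$, and then rewrite this for $\lambda = Ak$ as $A^T\I\mu \in \zdd$, i.e., $\mu \in \I^{-1}(A^T)^{-1}\zdd = \Lambda^\circ$. One caveat on the bookkeeping you flagged: with the paper's convention $\pi(z)=M_{z_2}T_{z_1}$ the composition law is $\pi(z)\pi(w)=e^{-2\pi i\, w_2\cdot z_1}\pi(z+w)$ (your indices are swapped), but this does not affect the commutator or the conclusion.
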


\begin{proof}
  Let $z\in \rdd $ and $\lambda = Ak \in \Lambda $ for some
  $k\in \zdd $. A straight forward computation yields
  $\pi (z ) \pi (\lambda) = e^{2\pi i (\lambda _1 \cdot z_2 - \lambda
    _2 \cdot z_1)} \pi (\lambda ) \pi (z)$.
  Consequently, the time-frequency shifts commute
  \fif\
  \begin{equation*}
    1 = e^{2\pi i (\lambda _1 \cdot z_2 - \lambda _2 \cdot z_1)} =
    e^{2\pi i \lambda \cdot \I z} = e^{2\pi i Ak \cdot \I z} \, .
  \end{equation*}
  This holds for all $k\in \zdd $ \fif\
  $Ak \cdot \I z = k \cdot A^T \I z \in \Z$ for all $k\in \zdd $,
  which is precisely the case when $A^T \I z \in \zdd $, or equivalently when
  $z\in \I \inv (A^T)\inv \zdd = \duall $.
\end{proof}

  This interpretation of the adjoint lattice is crucial for an
  important technical point.  
\begin{lem}[Bessel duality] \label{cbessel}
Let $g\in \lrd $ and $\Lambda \subseteq \rdd $ be a lattice. Then 
$\G (g,\Lambda ) $ is a Bessel sequence if and only if $\G (g,
\Lambda ^\circ )$ is a Bessel sequence. 
\end{lem}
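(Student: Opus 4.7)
By the involutive identity $(\Lambda^\circ)^\circ = \Lambda$ (immediate from $\Lambda^\circ = \I(A^T)^{-1}\Z^{2d}$, or equivalently from the symmetric characterization in Lemma~3.3), the statement is symmetric in $\Lambda$ and $\Lambda^\circ$, so it suffices to prove one implication. I would assume $\G(g,\Lambda)$ is a Bessel sequence with bound $B$ and show $\G(g,\Lambda^\circ)$ is Bessel.

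The central tool is the Fundamental Identity of Gabor Analysis (Theorem~3.2(i)) applied with $\Lambda^\circ$ in place of $\Lambda$ — this is legitimate precisely because $(\Lambda^\circ)^\circ = \Lambda$ and $\vol(\Lambda^\circ) = \vol(\Lambda)^{-1}$. With the choices $\gamma = g$ and $h = f$, the identity reads
\[
  \sum_{\mu \in \Lambda^\circ} |V_g f(z+\mu)|^2 = \vol(\Lambda) \lim_{n \to \infty} \sum_{\lambda \in \Lambda} K_n(-\I \lambda)\, V_g g(\lambda)\, \overline{V_f f(\lambda)}\, e^{2\pi i \lambda \cdot \I z}
\]
in $L^1(\R^{2d}/\Lambda^\circ)$. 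The Bessel condition for $\G(g,\Lambda^\circ)$ is equivalent (after substituting $\pi(-z)f$ for $f$) to a uniform-in-$z$ bound on the LHS by $B'\|f\|^2$, so the task reduces to bounding the RHS in this fashion.

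By the Cauchy--Schwarz inequality, the RHS is bounded by $\vol(\Lambda)\,\|V_g g|_\Lambda\|_{\ell^2}\,\|V_f f|_\Lambda\|_{\ell^2}$. The first factor is $\leq \sqrt{B}\,\|g\|$ by applying Bessel for $\Lambda$ with test function $g$. The difficulty lies in the second factor, since a direct estimate $\|V_f f|_\Lambda\|_{\ell^2} \leq C\|f\|^2$ would amount to a Bessel property for the \emph{different} system $\G(f,\Lambda)$, which is not assumed. My plan is to reduce by density to finite linear combinations $f = \sum_{\mu \in F} c_\mu \pi(\mu)g$ with $F \subset \Lambda^\circ$ finite, and to expand $V_f f(\lambda)$ using the commutation identity from Lemma~3.3: $\pi(\lambda)\pi(\mu') = \pi(\mu')\pi(\lambda)$ for $\lambda \in \Lambda$, $\mu' \in \Lambda^\circ$. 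This reduces $V_f f(\lambda)$ to a double sum of values $V_g g(\lambda-(\mu-\mu'))$ with phase factors. For each $\sigma = \mu-\mu' \in \Lambda^\circ$, the translated system $\pi(-\sigma)\G(g,\Lambda) = \G(\pi(-\sigma)g, \Lambda)$ is unitarily equivalent to $\G(g,\Lambda)$, hence Bessel with the same bound $B$, so $\sum_\lambda |V_g g(\lambda-\sigma)|^2 \leq B\|g\|^2$ uniformly in $\sigma$; this uniform estimate combined with a careful bookkeeping of the double sum should produce the desired bound $\|f\|^2 \leq B'\|c\|_{\ell^2}^2$, which extends to all $f \in L^2$ by density.

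The main obstacle is precisely the apparent circularity of controlling $V_f f|_\Lambda$ without a separate Bessel hypothesis for $\G(f,\Lambda)$. It is the commutation relation from Lemma~3.3 that breaks this circularity: it lets the Bessel bound for $\G(g,\Lambda)$ propagate to every $\Lambda^\circ$-translate of $g$, and hence to finite Gabor expansions of $f$ along $\Lambda^\circ$, in a way that is uniform in the translation parameter $\sigma$.
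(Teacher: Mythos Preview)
Your approach has a genuine gap at the step you yourself flag as the ``main obstacle.'' After applying Cauchy--Schwarz to the right-hand side of the fundamental identity you need a bound on $\|V_f f|_\Lambda\|_{\ell^2}$ for $f=\sum_{\mu\in F} c_\mu \pi(\mu)g$. Using commutation as you suggest, one can write $V_f f(\lambda)=\sum_{\mu'}\overline{c_{\mu'}}\,V_g(\pi(\mu')^*f)(\lambda)$, and the Bessel bound for $\G(g,\Lambda)$ gives $\|V_g(\pi(\mu')^*f)|_\Lambda\|_{\ell^2}\le \sqrt{B}\,\|f\|$ uniformly in $\mu'$. But combining these estimates via the triangle inequality only yields
\[
\|V_f f|_\Lambda\|_{\ell^2}\le \sqrt{B}\,\|c\|_{\ell^1}\,\|f\|,
\]
and more refined bookkeeping (grouping by $\sigma=\mu-\mu'$ and using Young's inequality for the resulting convolution) still produces an $\|c\|_{\ell^1}$ factor. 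You never get $\|c\|_{\ell^2}^2$, so the constant in the final inequality grows with $|F|$ and the argument does not close. A secondary technical issue: Theorem~3.2(i) gives the identity only in $L^1(\R^{2d}/\Lambda^\circ)$, so extracting a pointwise bound on the left-hand side (in particular at $z=0$) is not automatic without the extra hypotheses of part~(ii), which you do not have.

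The paper resolves the circularity by an idea you are missing: it introduces an \emph{auxiliary} window $h\in\mathcal{S}(\R^d)$ with $\|h\|_{L^2}=1$. The point is that $\G(h,M)$ is automatically a Bessel sequence for \emph{every} lattice $M$, in particular for $\Lambda^\circ$; this supplies for free the very Bessel bound on $\Lambda^\circ$ that your argument lacks. The paper then computes $\|f\|^2=\|V_h f\|^2$, periodizes over $\Lambda$, uses the commutation relation to pull the $\pi(\mu)$'s through, and applies first the Bessel bound for $\G(g,\Lambda)$ and then the (free) Bessel bound for $\G(h,\Lambda^\circ)$ to obtain $\|f\|^2\le B_g B_h\,|\det A|\,\|c\|_{\ell^2}^2$. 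The auxiliary Schwartz window is precisely the missing ingredient that breaks the circle.
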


\begin{proof}
  The proof is inspired by \cite{MR1350701}.

  Fix  $h\in \Sc (\rd )$ with $\|h\|\Lnrm = 1$. Then $\G (h, M)$ is a
  Bessel sequence for every lattice $ M \subseteq \rdd $. Next let
  $Q=A[0,1)^{2d}$ be a fundamental domain of $\Lambda = A \zdd$, i.e.,
  $\rdd = \bigcup _{\lambda \in \Lambda } \lambda +Q $ as a disjoint
  union. As a consequence we may write $\int _{\rdd } f(z) \, \dif z =
  \int _Q \sum _{\lambda \in \Lambda } f(z-\lambda ) \, \dif z $ for $f\in
  L^1(\rdd )$. 

  Now assume that $\G (g,\Lambda )$ is a Bessel sequence. Let
  $c = (c_\mu )_{\mu \in \duall} \in \ell ^2(\duall )$ be a finite
  sequence and $f = \sum _{\mu \in \duall } c_\mu \pi (\mu )g$. Since
  $V_h: \lrd \to \lrdd $ is an isometry by
  Proposition~\ref{sec:short-time-fourier-OrthoRel}, we obtain
  \begin{equation}
    \begin{aligned}
      \label{eq:c4}
      \|f\|_{L^2(\rd )}^2 &= \|V_hf\|_{L^2(\rdd )} ^2 \\
      &= \int _{Q } \sum _{\lambda \in \Lambda } \Big| \langle \sum
      _{\mu \in \duall } c_\mu \pi (\mu )g, \pi (-\lambda +z)h \rangle
      \Big|^2 \, \dif z = \int _Q I(z) \, \dif z \, .
    \end{aligned}
  \end{equation}

  We now reorganize the sum over $\mu $.  First we use
  $\pi (-\lambda +z) = \gamma _{z,\lambda } \pi (\lambda )^* \pi (z)$
  for some phase factor $|\gamma _{z,\lambda }|=1$. Then we use the
  commutativity
  $\pi (\lambda ) \pi (\mu ) = \pi (\mu ) \pi (\lambda ) $ for all
  $\lambda \in \Lambda , \mu \in \duall $
  (Lemma~\ref{sec:commutation-rules-cadjoint}). This is the heart of
  the proof, and the reader should convince herself that the proof
  does not work without this property.  We obtain
  \begin{align*}
    \langle   \sum _{\mu \in \duall } c_\mu \pi (\mu
    )g, \pi (-\lambda +z)h \rangle
    &=  \bar{\gamma } _{\lambda , z}
    \langle \pi (\lambda ) g, \sum _{\mu \in \duall } \bar{c}_\mu \pi
    (\mu )^* \pi (z) h\rangle \\
    &=   \bar{\gamma } _{\lambda , z}
    \langle \pi (\lambda ) g, \pi (z) \sum _{\mu \in \duall }
    \bar{c}_{-\mu } \gamma _{\mu ,z} \pi(\mu )  h\rangle  \, .
  \end{align*}
  Since both Gabor families $\G (g,\Lambda )$ and $\G (h,\duall )$ are
  Bessel sequences by assumption (with constants $B_g $ and $B_h$), we
  obtain a pointwise estimate for the integrand $I(z)$ in
  \eqref{eq:c4}:
  \begin{align*}
    I(z) &= \sum _{\lambda \in \Lambda } \Big| \bar{\gamma } _{\lambda , z}
           \langle \pi (z) \sum _{\mu \in \duall }
           \bar{c}_{-\mu } \gamma _{\mu ,z} \pi
           (\mu )  h , \pi (\lambda ) g \rangle \Big|  ^2 \\
         &\leq B_g \Big\| \pi (z) \sum _{\mu \in \duall }
           \bar{c}_{-\mu } \gamma _{\mu ,z} \pi
           (\mu )  h \Big\|_{L^2(\rd) } ^2 \\
         &\leq B_g B_h \sum _{\mu \in \duall } |\bar{c}_{-\mu } \gamma _{\mu
           ,z} |^2 = B_g B_h \|c\|\lnrm^2 \, .
  \end{align*}
  Integration over $z$ now yields
  \begin{equation*}
    \Big\|\sum _{\mu \in \duall } c_\mu \pi (\mu )g \Big\|\Lnrm^2 = \int _Q I(z) \, \dif z
    \leq B_g B_h |\det A| \|c\|\lnrm^2 \, ,
  \end{equation*}
  and thus $\G (g, \duall )$ is a Bessel sequence.

  Since $\Lambda = (\duall ) ^\circ $, the proof of the converse is
  the same.
\end{proof}

\section{Duality Theory}
\label{sec:duality-theory}

The duality theory relates the spanning properties of a Gabor family 
$\G(g, \Lambda)$ on a lattice $\Lambda $ to the spanning properties of
$\G(g, \Lambda^\circ)$ over the adjoint lattice. 
The following duality theorem is the central result of the theory of
Gabor frames. We will see that most structural results about Gabor frames can be
derived easily from it. 
\begin{thm}[Duality theorem]
  \label{sec:duality-theory-DualityTheory}
  Let $g \in L^2(\Rd)$ and $\Lambda \subseteq \Rtd$ be a lattice. Then
  the following are equivalent:
  \begin{enumerate}[(i)]
  \item $\G(g, \Lambda)$ is a frame for $L^2(\Rd)$.
  \item $\G(g, \Lambda^\circ)$ is a Bessel sequence and there exists a
    dual window $\gamma \in L^2(\Rd)$ such that $\G(\gamma,
    \Lambda^\circ)$ is a Bessel sequence satisfying
    \begin{equation}
      \label{eq:duality-theory-BiorthogonalityRelation}
      \langle \gamma , \pi(\mu)g \rangle
      = \vol(\Lambda) \delta_{\mu,0} \qquad \forall \mu \in \Lambda^\circ
      \text{.}
    \end{equation}
  \item $\G(g, \Lambda^\circ)$ is a Riesz sequence for $L^2(\Rd)$.
  \end{enumerate}
\end{thm}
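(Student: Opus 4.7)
My plan revolves around the single identity obtained from the pointwise FIGA \eqref{eq:FIGAptw} at $z=0$: whenever $\G(g,\Lambda)$ and $\G(\gamma,\Lambda)$ are Bessel with $\sum_{\mu\in\duall}|V_g\gamma(\mu)|<\infty$,
\begin{equation*}
  \langle T_{g,\gamma,\Lambda}f,h\rangle
  = \vol(\Lambda)\inv \sum_{\mu\in\duall} V_g\gamma(\mu)\,\overline{V_f h(\mu)},
  \qquad
  T_{g,\gamma,\Lambda}f := \sum_{\lambda\in\Lambda}\langle f,\pi(\lambda)g\rangle\,\pi(\lambda)\gamma.
\end{equation*}
The biorthogonality \eqref{eq:duality-theory-BiorthogonalityRelation} is exactly the statement $V_g\gamma|_{\duall}=\vol(\Lambda)\delta_0$, which collapses the right-hand side to $\langle f,h\rangle$, i.e., to the reconstruction formula $T_{g,\gamma,\Lambda}=I$. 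Bessel duality (Lemma~\ref{cbessel}) will be invoked throughout to move Bessel hypotheses between $\Lambda$ and $\duall$. I plan to close the cycle \Rimp{ii}{iii}, \Rimp{iii}{i}, \Rimp{i}{ii}.

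For \Rimp{ii}{iii}, a short computation with the commutation rule from Lemma~\ref{sec:commutation-rules-cadjoint} turns biorthogonality into $\langle \pi(\mu)g,\pi(\nu)\gamma\rangle = \vol(\Lambda)\delta_{\mu,\nu}$ (the phases accumulated from $\pi(\mu)^*\pi(\nu)$ are killed by the Kronecker delta). Hence for a finite sequence $c$ and $f=\sum_\mu c_\mu\pi(\mu)g$ one reads off $c_\nu = \vol(\Lambda)\inv\langle f,\pi(\nu)\gamma\rangle$, and Bessel for $\G(\gamma,\duall)$ supplies $\|c\|\lnrm^2 \leq \vol(\Lambda)^{-2}B_\gamma\|f\|\Lnrm^2$; the upper Riesz bound is just the Bessel bound for $\G(g,\duall)$.

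For \Rimp{iii}{i}, the Riesz property makes the Gramian $G_{g,\duall}$ invertible on $\ell^2(\duall)$, so I set $\gamma:=\vol(\Lambda)\sum_\mu(G\inv e_0)_\mu\pi(\mu)g\in L^2(\rd)$; biorthogonality is then the identity $Ga = \vol(\Lambda)e_0$ read coordinate-wise. The Bessel bound for $\G(\gamma,\duall)$ follows because each $\pi(\nu)\gamma$ lies in $\overline{\mathrm{span}}\,\G(g,\duall)$, so the Bessel sum collapses via biorthogonality to the Riesz lower bound for $\G(g,\duall)$. Two applications of Bessel duality transfer the Bessel properties to $\Lambda$, biorthogonality provides the absolute summability of $V_g\gamma|_{\duall}$ needed to invoke pointwise FIGA, and the identity above then gives $T_{g,\gamma,\Lambda}=I$. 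A Cauchy--Schwarz estimate
\begin{equation*}
  \|f\|\Lnrm^4 = |\langle T_{g,\gamma,\Lambda}f,f\rangle|^2 \leq B_\gamma\|f\|\Lnrm^2 \sum_{\lambda\in\Lambda}|\langle f,\pi(\lambda)g\rangle|^2
\end{equation*}
produces the lower frame bound with $A=B_\gamma\inv$, while the upper frame bound is the Bessel bound for $\G(g,\Lambda)$.

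The genuinely delicate step is \Rimp{i}{ii}. I take the canonical dual $\gamma:=S_{g,\Lambda}\inv g$, for which $T_{g,\gamma,\Lambda}=I$ by construction, and Bessel duality applied to $\G(g,\Lambda)$ and to the canonical dual frame $\G(\gamma,\Lambda)=S_{g,\Lambda}\inv\G(g,\Lambda)$ yields Bessel on $\duall$ for both windows. Biorthogonality, however, is not free: since the summability of $V_g\gamma|_{\duall}$ is not yet available, I must work with the $L^1$-convergent FIGA (Theorem~\ref{sec:fund-ident-gabor-FIGA}(i)) to obtain $\langle f,h\rangle = \vol(\Lambda)\inv\lim_n\sum_\mu K_n(-\I\mu)V_g\gamma(\mu)\overline{V_f h(\mu)}$, then probe with shifted test pairs $f=\pi(\nu)f_0$, $h=h_0$, using the covariance \eqref{eq:2} to shift the index in $V_f h$ by $\nu\in\duall$. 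Isolating the $\nu$-coefficient then forces $V_g\gamma(\nu)=\vol(\Lambda)\delta_{\nu,0}$. Making this coefficient extraction rigorous is the main obstacle: it requires the density of $\{V_{f_0}h|_{\duall}:f_0,h\in L^2(\rd)\}$ in $\ell^2(\duall)$, which in turn rests on the linear independence of the time-frequency shifts $\pi(\mu)$, $\mu\in\duall$, and this is where the proof will spend most of its effort.
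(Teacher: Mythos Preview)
Your implications \Rimp{ii}{iii} and \Rimp{iii}{i} are correct and essentially coincide with the paper's route through \Rimp{ii}{iii}, \Rimp{iii}{ii}, and \Rimp{ii}{i}: your explicit construction $\gamma=\vol(\Lambda)\sum_\mu(G\inv e_0)_\mu\,\pi(\mu)g$ is precisely the dual-basis element that the paper obtains abstractly from Riesz-basis theory, and the Cauchy--Schwarz estimate for the lower frame bound is identical to the paper's.

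Your \Rimp{i}{ii}, however, is overcomplicated, and the ``main obstacle'' you identify---density of $\{V_{f_0}h|_{\duall}:f_0,h\in L^2\}$ in $\ell^2(\duall)$, resting on linear independence of \tfs s---is a phantom. The missed observation is that the \emph{left-hand side} of FIGA is already known to be constant in $z$: applying the reconstruction formula $T_{g,\gamma,\Lambda}=I$ to $\pi(z)^*f$ and pairing with $\pi(z)^*h$ gives
\[
\Phi(z):=\sum_{\lambda\in\Lambda}V_gf(z+\lambda)\,\overline{V_\gamma h(z+\lambda)}
=\langle\pi(z)^*f,\pi(z)^*h\rangle
=\langle f,h\rangle\qquad\text{for every }z\in\rdd.
\]
A constant function in $L^1(\rdd/\Lambda)$ has a single nonzero Fourier coefficient; by Proposition~\ref{sec:short-time-fourier-OrthoRel}(ii) the Fourier coefficients of $\Phi$ are $V_g\gamma(\mu)\,\overline{V_fh(\mu)}$ for $\mu\in\duall$. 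Hence $\vol(\Lambda)^{-1}V_g\gamma(\mu)\,\overline{V_fh(\mu)}=\langle f,h\rangle\,\delta_{\mu,0}$ for all $f,h\in L^2(\rd)$. For each fixed $\mu\neq 0$ one only needs \emph{some} pair $f,h$ with $V_fh(\mu)\neq 0$ (e.g.\ $h=\pi(\mu)f$), which is trivial---no density, no probing with shifted test pairs, no summability kernel. The biorthogonality drops out in two lines once you see that $\Phi$ is constant; this is exactly the paper's argument.
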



We follow the proof sketch given in the survey
article~\cite{MR3232589}.
\begin{proof}

  \Rimp{i}{ii}: Since $\G(g, \Lambda)$ is a Gabor frame,
  there exists a dual window $\gamma$ in $L^2(\Rd)$ such that
  $\G(\gamma, \Lambda)$ is a frame and  the
  reconstruction formula
  \begin{equation*}
    f = \sum_{\lambda \in \Lambda} \langle f, \pi(\lambda)g \rangle \pi(\lambda) \gamma
  \end{equation*}
  holds for all $f \in L^2(\Rd)$ with unconditional $L^2$-convergence.
  We apply the reconstruction formula to $\pi (z)^*f$ and take the
  inner product with $\pi (z) ^*h$ for $z\in \rdd $ and $h\in \lrd
  $.  Then
  we have 
  \begin{align*}
    \langle f, h \rangle   &=  \langle \pi (z) ^* f, \pi (z)^* h \rangle
    = \sum_{\lambda \in \Lambda} \langle \pi (z) ^*f, \pi(\lambda)g \rangle
    \langle \pi(\lambda) \gamma , \pi (z) ^* h \rangle \\
   & = \sum_{\lambda \in \Lambda} V_gf(z+\lambda)
     \overline{V_{\gamma}h(z+\lambda)} = \Phi (z)
  \end{align*}
  for all $f,h \in L^2(\Rd)$ and all $z \in \Rtd$.  This means that the
  $\Lambda $-periodic function $\Phi $ on the right-hand side is
  constant.

  By Proposition~\ref{sec:short-time-fourier-OrthoRel}\,(ii) the
  Fourier coefficients of the right-hand side are given by
  \begin{equation*}
    \hat{\Phi} (\nu ) = \big( V_gf \cdot \overline{V_\gamma h }\big)\,
    \widehat{}\, (\nu ) =   V_g \gamma(\mu)
    \overline{V_f h(\mu)} \text{\,,} 
  \end{equation*}
  where $\nu \in \Lambda ^\perp$ and $\mu = \I \nu \in \duall $.
  Since these are the Fourier coefficients of a constant function, they must satisfy
  \begin{equation*}
    \vol(\Lambda)^{-1}  V_g \gamma(\mu)
    \overline{V_f h(\mu)} = \langle f,h \rangle\delta _{\mu ,0} 
    \qquad \forall \mu \in \Lambda^\circ \text{.}
  \end{equation*}

  As this identity holds for all $f, h \in \lrd $, we obtain the
  biorthogonality relation
  \begin{equation*}
    \vol(\Lambda)^{-1}  \langle \gamma , \pi (\mu ) g\rangle = \delta
    _{\mu ,0}  \qquad \forall \mu \in \Lambda ^\circ \text{.}
  \end{equation*}

  By assumption both $\G (g,\Lambda )$ and $\G (\gamma , \Lambda )$
  are frames and thus Bessel sequences, therefore Lemma~\ref{cbessel}
  implies that both $\G (g, \duall ) $ and
  $\G (\gamma , \Lambda ^\circ )$ are Bessel sequences.

  \Rimp{ii}{i}: 
  We use the biorthogonality and read the fundamental identity \eqref{eq:FIGAptw} backwards:
  \begin{equation*}
    \mathrm{vol}\, (\Lambda )\inv \sum _{\mu \in \duall } V_g\gamma
    (\mu ) \overline{V_f h (\mu ) } e^{2\pi i \mu \cdot \I z} = \sum
    _{\lambda \in \Lambda } V_gf(z+\lambda ) \, V_\gamma h (z+\lambda )
    \, .
  \end{equation*}
  Since both $\G (g, \duall )$ and $\G (\gamma , \duall )$ are Bessel
  sequences, Lemma~\ref{cbessel} implies that the Gabor systems
  $\G(g, \Lambda)$ and $\G(\gamma, \Lambda)$ are also Bessel
  sequences.  Furthermore
  $\sum _{\mu \in \duall } |V_g\gamma (\mu )|<\infty $ by the
  biorthogonality
  relation~\eqref{eq:duality-theory-BiorthogonalityRelation}, hence
  all assumptions of Theorem~\ref{sec:fund-ident-gabor-FIGA} are
  satisfied and guarantee that \eqref{eq:FIGAptw} holds
  pointwise. For $z=0$ and $f=h$ we thus obtain
  \begin{equation*}
    \|f\|\Lnrm^2 = \sum   _{\lambda \in \Lambda } \langle  f , \pi
    (\lambda ) g  \rangle \, \langle \pi (\lambda ) \gamma ,   f \rangle \, .
  \end{equation*}
  Since both sets $\G (g,\Lambda )$ and $\G (\gamma , \Lambda )$ are
  Bessel sequences with Bessel bounds $B_g$ and $B_{\gamma}$
  respectively, the frame inequality for $\G (g,\Lambda )$ is obtained
  as follows:
  \begin{align*}
    \|f \|\Lnrm^4 & \leq \Big( \sum _{\lambda \in \Lambda } |\langle f, \pi
    (\lambda ) g \rangle |^2\Big)  \, \Big( \sum _{\lambda \in \Lambda } |\langle f, \pi
                 (\lambda ) \gamma  \rangle |^2\Big) \\
    &\leq B_{\gamma} \|f \|\Lnrm ^2   \sum _{\lambda \in \Lambda } |\langle f, \pi
    (\lambda ) g \rangle |^2  \leq B_gB_{\gamma}\|f\|\Lnrm^4 \, .
  \end{align*}


  

  \Rimp{ii}{iii}: By assumption, the Bessel sequences
  $\G(g, \Lambda^\circ)$ and $\G(\gamma, \Lambda^\circ)$ satisfy the
  biorthogonal
  condition~\eqref{eq:duality-theory-BiorthogonalityRelation}, thus
   \begin{equation}
     \label{eq:duality-theory-eq8}
     \langle \pi(\nu) \gamma, \pi(\mu) g \rangle
     = e^{-2\pi i (\mu_2 - \nu_2)\cdot\nu_1} \,  \langle \gamma, \pi(\mu - \nu) g \rangle
     =  \vol(\Lambda) \delta_{\mu - \nu, 0}
     \quad \forall \mu,\nu \in \Lambda^\circ \, .
   \end{equation}
   Define $\tilde{\gamma} := \vol(\Lambda)^{-1}\gamma$, then
   \eqref{eq:duality-theory-eq8} implies that $\G(\tilde{\gamma},
   \Lambda^\circ)$ is a biorthogonal Bessel sequence for $\G(g,
   \Lambda^\circ)$.  This means that  $\G(g, \Lambda^\circ)$ is a Riesz
   sequence. 



   \Rimp{iii}{ii}: By assumption $\G(g, \Lambda^\circ)$ is a Riesz
   sequence, \ie a Riesz basis for its closed linear span, which we
   denote by $\cK := \overline{\vspan}\{\G(g, \Lambda^\circ)\}$. 
   By the general  properties of Riesz bases \cite{MR3495345}, there exists a Bessel sequence
   $\{ e_{\nu} : \nu \in \Lambda^\circ \}$ in $\cK $ such that
   \begin{equation*}
     \label{eq:duality-theory-eq10}
     \langle e_{\nu}, \pi (\mu) g \rangle = \delta_{\nu,\mu}
     \qquad \forall \mu, \nu \in \Lambda^\circ \text{.}
   \end{equation*}
   On the other hand, since $\cK $ is invariant with respect to
   $\pi (\nu )$ for all  $\nu \in \duall $, we have that $\pi (\nu )e_0$ is also in $\cK $
   and satisfies the biorthogonality
   \begin{equation*}
     \langle \pi (\nu ) e_0, \pi (\mu ) g \rangle = 
     e^{-2\pi i (\mu_2 - \nu_2)\cdot\nu_1}
     \langle e_0, \pi(\mu - \nu) g \rangle  = \delta_{0, \mu - \nu}
     \, .
   \end{equation*}
   This implies that
   $e_\nu - \pi (\nu )e_0 \in \cK \cap \cK ^\perp = \{0\}$.

   
   After the normalization $\gamma := \vol(\Lambda)^{-1}e_0$, the set
   $\G(\gamma, \Lambda^\circ) = \{ \vol(\Lambda)^{-1} e_{\nu} : \nu
   \in \Lambda^\circ \}$
   satisfies the biorthogonality relations~
   \eqref{eq:duality-theory-BiorthogonalityRelation} and is Bessel
   sequence by the properties of Riesz bases.
\end{proof}

The duality theory was foreshadowed by Rieffel's abstract work on
non-com\-mu\-ta\-tive tori~\cite{rieffel88}. The biorthogonality
relations~\eqref{eq:duality-theory-BiorthogonalityRelation} were
discovered by the engineers Wexler and
Raz~\cite{Wexler:1990:DGE:104676.104677} and characterize all possible
dual windows (see Corollary~\ref{sec:duality-theorem-dual-CharDualWindows}).
Janssen~\cite{MR1310658,MR1350700}, Daubechies et al.~\cite{MR1350701}
and Ron-Shen~\cite{MR1460623} made the results of Wexler and Raz rigorous
and further expanded upon them which became the duality theory for
separable lattices.  The theory for general lattices is due to
Feichtinger and Kozek~\cite{MR1601091}.  Recently, Jakobsen and
Lemvig~\cite{MR3419761} formulated density and duality theorems for
Gabor frames along a closed subgroup of the time-frequency plane. We
remark that the duality theory also holds verbatim for general locally
compact Abelian groups admitting a lattice. 

\begin{rem}[Frame bounds and an alternative proof]
  \label{sec:duality-theory-FrameBounds}

  By rewriting Janssen's proof of the duality theory in
  \cite{MR1350700} for general lattices, one can show that
  \begin{equation*}
    A I_{L^2} \leq S_{g, \Lambda} \leq B I_{L^2} \quad  \Longleftrightarrow \quad
     A I_{\ell^2} \leq \vol(\Lambda)^{-1} G_{g, \Lambda^\circ} \leq  B I_{\ell^2}
    \text{.}
  \end{equation*}
  Hence, the family $\G(g,\Lambda)$ is a frame with frame bounds
  $A,B >0$ if and only if $\G(g, \Lambda^\circ)$ is a Riesz sequence
  with bounds $\vol(\Lambda)A, \vol(\Lambda)B >0$ respectively.
\end{rem}



\begin{defi}
  Let $g \in L^2(\Rd)$ and $\G(g, \Lambda)$ be a Bessel sequence.  We
  call $\gamma \in L^2(\Rd)$ a \emph{dual window} for $\G(g,
  \Lambda)$ if $\G(\gamma, \Lambda)$ is a Bessel sequence and the
  reconstruction property 
  $$
  f = \sum _{\lambda\in \Lambda } \langle f, \pi (\lambda )g \rangle
  \pi (\lambda ) \gamma = \sum _{\lambda \in \Lambda } \langle f, \pi
  (\lambda ) \gamma \rangle \pi (\lambda ) g
   $$
  holds for all $f\in \lrd $. 
\end{defi}

The duality theorem now yields the  following characterization of all  dual windows.



\begin{cor}
  \label{sec:duality-theorem-dual-CharDualWindows}
  Suppose $g, \gamma \in L^2(\Rd)$ and $\Lambda \subseteq \Rtd$ such
  that $\G(g,\Lambda)$ and $\G(\gamma,\Lambda)$ are Bessel sequences.
  Then $\gamma$ is a dual window for $\G(g, \Lambda)$ if and only if
  the Wexler-Raz biorthogonality
  relations~\eqref{eq:duality-theory-BiorthogonalityRelation} are
  satisfied.
\end{cor}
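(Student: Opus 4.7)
The plan is to treat both directions of the corollary as symmetric applications of the Fundamental Identity of Gabor Analysis, Theorem~\ref{sec:fund-ident-gabor-FIGA}\,(ii). The underlying observation is that the Wexler--Raz biorthogonality relations are precisely the condition under which the right-hand side of the identity \eqref{eq:FIGAptw} collapses to the constant $\langle f,h\rangle$. So the entire proof reduces to checking the hypotheses that allow us to apply \eqref{eq:FIGAptw} pointwise, and then reading the identity in one direction or the other.

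For the forward direction $(\Rightarrow)$, I would mimic the first part of the \Rimp{i}{ii} step in Theorem~\ref{sec:duality-theory-DualityTheory}. Starting from the reconstruction formula $f=\sum_{\lambda}\langle f,\pi(\lambda)g\rangle\pi(\lambda)\gamma$, I apply it to $\pi(z)^\ast f$ and pair with $\pi(z)^\ast h$ to obtain
\begin{equation*}
\langle f,h\rangle=\sum_{\lambda\in\Lambda}V_gf(z+\lambda)\overline{V_\gamma h(z+\lambda)}\qquad\forall z\in\Rtd,
\end{equation*}
so the $\Lambda$-periodic function on the right is constant. By Proposition~\ref{sec:short-time-fourier-OrthoRel}\,(ii) its $\nu$-th Fourier coefficient is $V_g\gamma(\mu)\overline{V_fh(\mu)}$ with $\mu=\mathcal{I}\nu\in\Lambda^\circ$, and the constant must be $\vol(\Lambda)^{-1}V_g\gamma(0)\overline{V_fh(0)}$. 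Letting $f,h$ vary yields $V_g\gamma(\mu)=\langle\gamma,\pi(\mu)g\rangle=0$ for $\mu\neq 0$ and $\langle\gamma,g\rangle=\vol(\Lambda)$, which is \eqref{eq:duality-theory-BiorthogonalityRelation}.

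For the backward direction $(\Leftarrow)$, assume the Wexler--Raz relations. Since $\G(g,\Lambda)$ and $\G(\gamma,\Lambda)$ are Bessel sequences, the Bessel duality Lemma~\ref{cbessel} guarantees that $\G(g,\Lambda^\circ)$ and $\G(\gamma,\Lambda^\circ)$ are also Bessel, so the hypotheses of Theorem~\ref{sec:fund-ident-gabor-FIGA}\,(ii) hold (the summability $\sum_{\mu\in\Lambda^\circ}|V_g\gamma(\mu)|<\infty$ is trivial, since by biorthogonality only the term $\mu=0$ is nonzero). The identity \eqref{eq:FIGAptw} then collapses, at $z=0$, to
\begin{equation*}
\sum_{\lambda\in\Lambda}\langle f,\pi(\lambda)g\rangle\langle\pi(\lambda)\gamma,h\rangle=\vol(\Lambda)^{-1}V_g\gamma(0)\overline{V_fh(0)}=\langle f,h\rangle.
\end{equation*}
Since this holds for all $h\in L^2(\Rd)$, and the series $\sum_\lambda\langle f,\pi(\lambda)g\rangle\pi(\lambda)\gamma$ converges unconditionally in $L^2$ by the Bessel property of $\G(\gamma,\Lambda)$, the reconstruction formula $f=\sum_\lambda\langle f,\pi(\lambda)g\rangle\pi(\lambda)\gamma$ follows. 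Swapping the roles of $g$ and $\gamma$ (the biorthogonality is symmetric after complex conjugation) gives the other reconstruction identity, so $\gamma$ is a dual window.

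The only real obstacle is a bookkeeping one: verifying that Theorem~\ref{sec:fund-ident-gabor-FIGA}\,(ii) applies, which requires Bessel bounds on the adjoint lattice $\Lambda^\circ$. This is exactly what the Bessel duality Lemma~\ref{cbessel} was set up to supply, so the corollary really is immediate once that lemma and the fundamental identity are in place. The rest is the standard passage from a weak reconstruction identity to an unconditionally convergent expansion, using only the Bessel property of $\G(\gamma,\Lambda)$.
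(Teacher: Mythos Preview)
Your proof is correct and is essentially the paper's approach: the paper's proof is a one-line pointer to the arguments in \Rimp{i}{ii} and \Rimp{ii}{i} of Theorem~\ref{sec:duality-theory-DualityTheory}, which you have faithfully unpacked. One minor redundancy: in the backward direction you invoke Lemma~\ref{cbessel} to pass from Bessel on $\Lambda$ to Bessel on $\Lambda^\circ$, but Theorem~\ref{sec:fund-ident-gabor-FIGA}\,(ii) only requires Bessel on $\Lambda$, which is already assumed, so that step can be dropped.
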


\begin{proof}
  This is simply
  equivalence \LRimp{i}{ii} of
  Theorem~\ref{sec:duality-theory-DualityTheory}.
\end{proof}

We conclude this section with a characterization of tight Gabor
frames.

\begin{cor}
  \label{sec:duality-theorem-CharTightFrame}
  A Gabor system $\G(g,\Lambda)$ is a tight frame if and only if $\G(g,
  \Lambda^\circ)$ is an orthogonal system.  In this case the frame
  bound satisfies $A = \vol(\Lambda)^{-1}\|g\|\Lnrm^2$.
\end{cor}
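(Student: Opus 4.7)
The plan is to read tightness as the statement that $g$ is (up to a scalar) its own dual window, and then invoke the Wexler--Raz biorthogonality relations from the duality theorem and its corollary. The forward direction is almost mechanical once this reformulation is made; the converse requires a short Bessel argument and a matching of constants.

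First, suppose $\G(g,\Lambda)$ is a tight frame with bound $A$. Then $S_{g,\Lambda} = A\,I_{L^2}$, so the canonical reconstruction reads $f = A^{-1}\sum_{\lambda\in\Lambda} \langle f,\pi(\lambda)g\rangle \pi(\lambda)g$ for all $f\in L^2(\Rd)$. This identifies $\gamma := A^{-1}g$ as a dual window in the sense of Definition~4.4. By Corollary~\ref{sec:duality-theorem-dual-CharDualWindows}, the Wexler--Raz biorthogonality
\begin{equation*}
\langle \gamma,\pi(\mu)g\rangle = \vol(\Lambda)\,\delta_{\mu,0}, \qquad \forall \mu\in\Lambda^\circ,
\end{equation*}
holds. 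Multiplying by $A$ gives $\langle g,\pi(\mu)g\rangle = A\,\vol(\Lambda)\,\delta_{\mu,0}$, which is exactly the statement that $\G(g,\Lambda^\circ)$ is an orthogonal system. Evaluating at $\mu = 0$ yields $\|g\|_{L^2}^2 = A\,\vol(\Lambda)$, hence $A = \vol(\Lambda)^{-1}\|g\|_{L^2}^2$.

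For the converse, assume $\G(g,\Lambda^\circ)$ is orthogonal. Since every element $\pi(\mu)g$ has norm $\|g\|_{L^2}$, normalizing reduces the Bessel inequality for $\G(g,\Lambda^\circ)$ to the classical Bessel inequality for an orthonormal system, with constant $\|g\|_{L^2}^2$. Define $\gamma := \vol(\Lambda)\|g\|_{L^2}^{-2}\,g$; then $\G(\gamma,\Lambda^\circ)$ is likewise a Bessel sequence, and a direct computation gives
\begin{equation*}
\langle \gamma,\pi(\mu)g\rangle = \vol(\Lambda)\|g\|_{L^2}^{-2}\langle g,\pi(\mu)g\rangle = \vol(\Lambda)\,\delta_{\mu,0}, \qquad \forall \mu\in\Lambda^\circ.
\end{equation*}
Hence the hypotheses of Theorem~\ref{sec:duality-theory-DualityTheory}\,(ii) are met, so by the implication \Rimp{ii}{i} of the duality theorem, $\G(g,\Lambda)$ is a frame and $\gamma$ is a dual window. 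Because $\gamma$ is a scalar multiple of $g$, the reconstruction formula $f = \sum_\lambda \langle f,\pi(\lambda)g\rangle\pi(\lambda)\gamma$ collapses to $S_{g,\Lambda} = \vol(\Lambda)^{-1}\|g\|_{L^2}^2\,I_{L^2}$, which is tightness with the claimed bound.

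The only step that requires care is the Bessel property of $\G(g,\Lambda^\circ)$ in the converse direction: one has to observe that orthogonality together with the uniform norm $\|\pi(\mu)g\|_{L^2} = \|g\|_{L^2}$ is precisely what is needed to apply the classical Bessel inequality, so no further growth or summability assumption on $g$ is required. Everything else is bookkeeping around the biorthogonality relation and the scalar normalization.
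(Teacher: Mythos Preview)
Your proof is correct and follows essentially the same path as the paper's: both directions identify $\gamma$ as a scalar multiple of $g$ and read tightness off the Wexler--Raz biorthogonality. In the converse the paper applies the Fundamental Identity (Theorem~\ref{sec:fund-ident-gabor-FIGA}) directly with $\gamma=g$, $h=f$, $z=0$, whereas you route through Theorem~\ref{sec:duality-theory-DualityTheory}\,(ii)$\Rightarrow$(i); since that implication is itself proved via the Fundamental Identity, the difference is cosmetic, though your version has the virtue of making the Bessel hypothesis on $\G(g,\Lambda^\circ)$ explicit.
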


\begin{proof}
  If $\G(g,\Lambda)$ is a tight frame, then the frame operator is just
  a multiple of the identity, \ie $S= A I_{L^2}$.  Hence
  the canonical dual window is of the form
  $\gamma = S^{-1}g = \frac{1}{A} g$ and the
  biorthogonality
  relations~\eqref{eq:duality-theory-BiorthogonalityRelation} yield 
  \begin{equation*}
    \label{eq:duality-theory-6}
    \langle g, \pi(\mu) g \rangle
    = A \langle \gamma, \pi(\mu) g \rangle
    = A \vol(\Lambda) \delta_{\mu, 0} \qquad \forall \mu \in \Lambda^\circ \text{.}
  \end{equation*}
  Therefore, $\G(g, \Lambda^\circ)$ is an orthogonal system and in
  particular $A = \vol (\Lambda )\inv \|g\|\Lnrm^2 $.

   
   Conversely, let $\G(g, \Lambda^\circ)$ be an orthogonal system, \ie
   \begin{equation*}
     \langle g, \pi(\mu) g \rangle = \|g\|\Lnrm^2 \delta_{\mu,0}
     \qquad \forall \mu \in \Lambda^\circ \text{.}
   \end{equation*}
   Then by Theorem~\ref{sec:fund-ident-gabor-FIGA} with
   $\gamma =g$, $h=f$ and $z=0$, we obtain
   \begin{equation*}
     \vol (\Lambda )\inv  \|g\|\Lnrm^2 \|f\|\Lnrm^2 
     = \sum _{\lambda \in \Lambda} |\langle f, \pi
     (\lambda )g \rangle |^2 
   \end{equation*}
   and thus $\G(g, \Lambda)$ is a tight frame.
\end{proof}

\section{The Coarse Structure of Gabor Frames}
Many of the fundamental properties of  Gabor frames can be derived
with little effort from the duality theorem. In the following we deal
with the density theorem, the Balian-Low theorem, and the  existence
of Gabor frames. 

\subsection{Density Theorem}
To recover $f$ from the inner products $\langle f, \pi (\lambda
)g\rangle$, we need enough information. The density theorem quantifies
this statement. The density theorem has a long history and has been
proved many times. We refer to Heil's comprehensive
survey~\cite{MR2313431}. Our point is that it follows immediately from
the duality theory.


\begin{thm}[Density theorem]
  \label{sec:density-theorem-DensityThm}
  Let $g \in L^2(\Rd)$ and $\Lambda \subseteq \Rtd$ be a lattice.
  Then the following holds:
  \begin{enumerate}[(i)]
  \item If $\G(g,\Lambda)$ is a frame for $\lrd $, then $0< \vol(\Lambda) \leq 1$.
  \item If $\G(g,\Lambda)$ is a Riesz sequence in $\lrd $, then $\vol(\Lambda)
    \geq 1$.
  \item $\G(g,\Lambda)$ is a Riesz basis for $\lrd $ if and only if it is a frame
    and $\vol(\Lambda) = 1$.
  \end{enumerate}
\end{thm}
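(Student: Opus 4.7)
The plan is to derive all three parts from the duality theorem (Theorem~\ref{sec:duality-theory-DualityTheory}) together with Corollary~\ref{sec:duality-theorem-CharTightFrame}, via a tight-frame reduction. For (i), I would first invoke the standard commutation $S_{g,\Lambda}\pi(\lambda)=\pi(\lambda)S_{g,\Lambda}$ for every $\lambda\in\Lambda$, which follows from reindexing the frame-operator sum and the group-like composition rule of time-frequency shifts. Setting $\tilde g := S_{g,\Lambda}^{-1/2}g$, this commutation implies $S_{\tilde g,\Lambda}=S_{g,\Lambda}^{-1/2}S_{g,\Lambda}S_{g,\Lambda}^{-1/2}=I_{L^2}$, so $\G(\tilde g,\Lambda)$ is a Parseval frame. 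Corollary~\ref{sec:duality-theorem-CharTightFrame} applied with tight bound $A=1$ then yields $\|\tilde g\|\Lnrm^2 = \vol(\Lambda)$. Applying the Parseval identity to $f=\tilde g$ and retaining only the $\lambda=0$ term gives $\|\tilde g\|\Lnrm^4 \leq \|\tilde g\|\Lnrm^2$, hence $\|\tilde g\|\Lnrm^2 \leq 1$, which combined with the previous identity proves $\vol(\Lambda)\leq 1$.

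Part (ii) is obtained by dualizing (i): if $\G(g,\Lambda)$ is a Riesz sequence, then by Theorem~\ref{sec:duality-theory-DualityTheory} the system $\G(g,\Lambda^\circ)$ is a frame, so (i) applied to $\Lambda^\circ$ gives $\vol(\Lambda^\circ)\leq 1$. Since $\Lambda^\circ = \I(A^T)^{-1}\zdd$ and $|\det\I|=1$, we have $\vol(\Lambda^\circ)=\vol(\Lambda)^{-1}$, so $\vol(\Lambda)\geq 1$.

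For (iii), the forward implication is immediate: a Riesz basis is simultaneously a frame and a Riesz sequence, so (i) and (ii) together force $\vol(\Lambda)=1$. For the converse, assume $\G(g,\Lambda)$ is a frame with $\vol(\Lambda)=1$; then $\|\tilde g\|\Lnrm^2=1$, so the inequality in the proof of (i) becomes an equality. The Parseval identity applied at $\tilde g$ then forces $\langle\tilde g,\pi(\lambda)\tilde g\rangle=0$ for every $\lambda\in\Lambda\setminus\{0\}$, so $\G(\tilde g,\Lambda)$ is an orthonormal Parseval frame, hence an orthonormal basis. Using the commutation identity, $\G(g,\Lambda)=S_{g,\Lambda}^{1/2}\,\G(\tilde g,\Lambda)$ is the image of an orthonormal basis under a bounded invertible operator, and therefore a Riesz basis.

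The main obstacle I anticipate is verifying the commutation $S_{g,\Lambda}\pi(\lambda)=\pi(\lambda)S_{g,\Lambda}$ cleanly, as it requires one to track the phase factors in $\pi(\lambda)\pi(\mu)=c_{\lambda,\mu}\pi(\lambda+\mu)$ and to use that $\Lambda+\lambda=\Lambda$; once this commutation is in place (and hence transfers to $S_{g,\Lambda}^{\pm 1/2}$ via functional calculus), the remainder is a short sequence of deductions from the duality theorem, Corollary~\ref{sec:duality-theorem-CharTightFrame}, and the trivial lower bound on the $\lambda=0$ term in the Parseval identity.
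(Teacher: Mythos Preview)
Your proof is correct and takes a genuinely different route from the paper's. The paper works with the canonical \emph{dual} window $\gamma=S_{g,\Lambda}^{-1}g$: it compares the two expansions $g=1\cdot g=\sum_\lambda\langle g,\pi(\lambda)\gamma\rangle\pi(\lambda)g$ and invokes the minimum-$\ell^2$-norm property of the canonical dual coefficients (cited from Duffin--Schaeffer) to get $\sum_\lambda|\langle g,\pi(\lambda)\gamma\rangle|^2\le 1$, then reads off $\vol(\Lambda)^2=\langle g,\gamma\rangle^2\le 1$ from the biorthogonality at $\mu=0$. You instead pass to the canonical \emph{tight} window $\tilde g=S_{g,\Lambda}^{-1/2}g$ and use Corollary~\ref{sec:duality-theorem-CharTightFrame} to obtain the exact value $\|\tilde g\|\Lnrm^2=\vol(\Lambda)$, after which the Parseval identity evaluated at $\tilde g$ gives the bound directly. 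Your argument is slightly more self-contained (it stays entirely within the results proved in the chapter and avoids the external minimality fact), at the cost of the functional-calculus step $S^{-1/2}\pi(\lambda)=\pi(\lambda)S^{-1/2}$. For part~(iii), the paper deduces biorthogonality $\langle g,\pi(\lambda)\gamma\rangle=\delta_{\lambda,0}$ from equality and concludes via ``Bessel $+$ biorthogonal $\Rightarrow$ Riesz sequence''; your route through the orthonormality of $\G(\tilde g,\Lambda)$ and the image of an ONB under $S^{1/2}$ is equally clean. Note that the commutation $S_{g,\Lambda}\pi(\lambda)=\pi(\lambda)S_{g,\Lambda}$, which you flag as the main obstacle, is also implicitly used in the paper's approach (it is what makes the canonical dual frame a Gabor system $\G(\gamma,\Lambda)$ in the first place), so this is not an extra cost of your method.
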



\begin{proof}
  (i) Let $\gamma = S\inv g$ be the canonical dual window of
  $\G (g, \Lambda )$. Then $g$ possesses the following two distinguished
  representations with respect to the frame $\G (g,\Lambda )$:
  \begin{equation*}
    g = 1 \cdot g = \sum _{\lambda \in \Lambda } \langle g, \pi (\lambda
    ) \gamma  \rangle \pi (\lambda ) g \, .
  \end{equation*}
  By the general properties of the dual frame~\cite{DS52}, the latter expansion
  has the coefficients with the minimum $\ell ^2$-norm, therefore
  \begin{equation*}
    \sum _{ \lambda \in \Lambda } | \langle g, \pi (\lambda )\gamma
    \rangle |^2 \leq 1 + \sum _{\lambda \neq 0} 0 = 1 \, .
  \end{equation*}
  Consequently, with the
  biorthogonality~\eqref{eq:duality-theory-BiorthogonalityRelation}
  (in fact, we only need the condition for $\mu = 0$) we obtain
  \begin{equation} \label{eq:ti8}
    \vol (\Lambda ) ^2 = \langle g, \gamma  \rangle ^2 \le \sum _{ \lambda
      \in \Lambda } | \langle g, \pi (\lambda )\gamma \rangle |^2 \leq 1
    \, , 
  \end{equation}
  which is the density theorem.

  (ii)  The volume of the adjoint lattice is
  $\vol(\Lambda^\circ) = \vol(\Lambda)^{-1}$.  Therefore, the claim is
  equivalent to (i) by
  Theorem~\ref{sec:duality-theory-DualityTheory}.

  (iii)  A Riesz basis is a Riesz sequence that is complete in
  the Hilbert space, and therefore is also a frame. Consequently, both (i)
  and (ii) apply and thus $\vol(\Lambda)=1$. 

  Conversely, if $\G(g, \Lambda)$ is a frame with $\vol(\Lambda)=1$,
  then we have equality in \eqref{eq:ti8} and thus $\langle g, \pi
  (\lambda ) g \rangle = \delta _{\lambda , 0}$ for $\lambda \in
  \Lambda $. Since the  Gabor system $\G (\gamma , \Lambda )$ is a
  Bessel sequence and biorthogonal to $\G (g,\Lambda )$, we deduce
  that $\G (g,\Lambda )$ is a Riesz sequence, and by the
  assumed completeness it is a Riesz basis for $\lrd $. 
 \end{proof}

 The above proof of the density theorem is due to Janssen~\cite{MR1310658}. 

\subsection{Existence of Gabor Frames for sufficiently dense lattices}

In the early treatments of Gabor frames one finds many qualitative
statements that assert the existence of Gabor frames. Typically they
claim that for a window function $g\in \lrd $ with ``sufficient'' decay
and smoothness and for a ``sufficiently dense'' lattice $\Lambda $ the
Gabor system $\G (g, \Lambda )$ is a frame for $\lrd $. For a sample
of results we refer to ~\cite{daubechies92,MR1021139,walnut93}. In this section
we derive such a qualitative result as a consequence of the duality
theorem.

We will measure decay and smoothness by means of \tf\ concentration as
follows: we say that $g$ belongs to the \modsp\ $M^\infty _{v_s}(\rd
)$ if
\begin{equation*}
  \label{eq:c16}
  |V_gg(z) | \leq C (1+|z|)^{-s} \qquad \forall z\in \rdd \,
  .
\end{equation*}
This is not the standard definition of the \modsp , but it is the most
convenient definition for our purpose.  A systematic
exposition of modulation spaces is contained in~\cite{MR1843717}.

To quantify the density of a
lattice $\Lambda = A \zdd $, we set simply
$$
\|\Lambda \| = \|A \|_{op} \, ,
$$
with the understanding that this definition is highly ambiguous and
depends more on the choice of a basis $A$ for the lattice than on the
lattice itself.

\begin{thm} \label{t:ex}
  Assume that $g\in M^\infty _{v_s}(\rd ) $ for some $s>2d$. Then
  there exists a $\delta _0$ depending on $g$ such that for every
  lattice $\Lambda = A \zdd $ with $\|A\|_{op} < \delta _0$  the Gabor
  system $\G (g, \Lambda )$ is a frame for $\lrd $.  
\end{thm}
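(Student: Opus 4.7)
The plan is to derive the frame property from the duality theorem (Theorem~\ref{sec:duality-theory-DualityTheory}) by showing that $\G(g,\duall)$ is a Riesz sequence once $\|A\|_{op}$ is small enough. The guiding intuition is that as $\Lambda$ becomes denser, the adjoint lattice $\duall = \I (A^T)^{-1}\zdd$ becomes sparser, and in view of the prescribed decay of $V_gg$ the atoms $\pi(\mu)g$ with $\mu \in \duall$ are then nearly orthogonal.

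First I would study the Gramian $G = G_{g,\duall}$ on $\ell^2(\duall)$. A short computation using $\pi(\nu)^*\pi(\mu) = c_{\mu,\nu}\,\pi(\mu-\nu)$ with $|c_{\mu,\nu}|=1$ gives $|G_{\mu\nu}| = |V_gg(\mu-\nu)|$, and the hypothesis $g\in M^\infty_{v_s}(\rd)$ yields $|G_{\mu\nu}|\leq C(1+|\mu-\nu|)^{-s}$. Since $G_{\mu\mu}=\|g\|\Lnrm^2$, I split $G = \|g\|\Lnrm^2 I + R$ with $R$ off-diagonal and plan to bound $\|R\|_{op}$ by Schur's test.

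The geometric ingredient is the estimate $|\mu'|\geq \|A\|_{op}^{-1}|k|$ for $\mu' = \I(A^T)^{-1}k \in \duall$, which follows from $\|(A^T)^{-1}k\|\geq \|A^T\|_{op}^{-1}|k|$ and the isometry property of $\I$. Combined with the decay of $V_gg$, Schur's test gives
\[
  \|R\|_{op} \;\leq\; \sum_{\mu'\in\duall,\;\mu'\neq 0} |V_gg(\mu')|
  \;\leq\; C \sum_{k\in\zdd\setminus\{0\}} (1+\|A\|_{op}^{-1}|k|)^{-s}
  \;\leq\; C\,\|A\|_{op}^{s} \sum_{k\in\zdd\setminus\{0\}} |k|^{-s},
\]
the last step being valid for $\|A\|_{op}\leq 1$. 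The remaining lattice sum converges precisely because $s>2d$, and the whole expression tends to $0$ as $\|A\|_{op}\to 0$.

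Choosing $\delta_0>0$ so that $\|R\|_{op}<\|g\|\Lnrm^2$ whenever $\|A\|_{op}<\delta_0$, a Neumann series shows that $G$ is bounded and invertible on $\ell^2(\duall)$, hence $\G(g,\duall)$ is a Riesz sequence, and the equivalence \LRimp{i}{iii} of Theorem~\ref{sec:duality-theory-DualityTheory} concludes that $\G(g,\Lambda)$ is a frame. The main obstacle is not the Schur estimate itself but recognizing that the decisive geometric quantity is the length of the shortest nonzero vector of $\duall$, which scales like $\|A\|_{op}^{-1}$; this is what produces the critical prefactor $\|A\|_{op}^{s}$ and simultaneously explains why the decay exponent $s$ must strictly exceed the dimension $2d$ of the phase space.
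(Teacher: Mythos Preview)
Your proposal is correct and follows essentially the same route as the paper: both arguments apply the duality theorem, split the Gramian $G_{g,\duall}$ as a multiple of the identity plus an off-diagonal remainder $R$, use the geometric bound $|\mu|\geq \|A\|_{op}^{-1}|k|$ for $\mu=\I(A^T)^{-1}k$, and control $\|R\|_{op}$ via Schur's test so that $G$ is invertible for small $\|A\|_{op}$. The only cosmetic differences are that the paper normalizes $\|g\|\Lnrm=1$ and stops at $\phi(\delta)=\sum_{k\neq 0}(1+\delta^{-1}|k|)^{-s}\to 0$, whereas you keep the $\|g\|\Lnrm^2$ factor and extract the explicit rate $\|A\|_{op}^s$.
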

In other words, there exists a sufficiently small neighborhood $V$  of the
zero matrix such that $\G (g, A \zdd ) $ is a frame for every $A\in V
$. 

\begin{proof}
  Invariably, qualitative existence theorems in Gabor analysis (and
  more generally in sampling theory) use the fact that an operator
  that is close enough to the identity operator is 
  invertible. For this proof, we use the duality theorem and show that
  on the adjoint lattice, which is sufficiently sparse, the Gramian
  matrix is diagonally dominant and therefore invertible.

  Without loss of generality we assume that $\|g\|\Lnrm=1$, then the
  Gramian can be written as  $G =
  \mathrm{I} + R$, where $G_{\mu , \nu } = \langle \pi (\nu ) g, \pi
  (\mu ) g \rangle$ and $R$ is the off-diagonal part of $G$. 

  We now make the following observations about $R$: 

  (i) If  $\|A\|_{op} = \delta  $ and $\mu = \I (A^T)\inv k \in \duall $, then
  $|k| = | A^T \I \inv \I (A^T)\inv k | \leq \|A\|_{op} |\mu | =
  \delta 
  |\mu |$ and
  therefore
  $$
  (1+|\mu |)^{-s} \leq (1+\delta \inv |k|)^{-s} \, .
  $$
  
  (ii) By applying a simplified version of Schur's test to the
  self-adjoint operator $R$, the operator norm of $R$ can estimated by
  \begin{align}
    \|R\|_{op} &\leq \sup _{\mu \in \duall } \sum _{\nu \neq \mu } |\langle
                 \pi (\nu ) g, \pi (\mu ) g \rangle |   \notag \\
               &= \sup _{\mu \in \duall } \sum _{\nu \neq \mu } |\langle
                 g, \pi (\mu -\nu )g \rangle | \notag \\
               &\leq \sum_{ \mu \neq 0} (1+|\mu |)^{-s} \label{mui} \\
               &\leq \sum_{ \substack{k\in \zd \\ k \neq 0}} (1+\delta \inv |k |)^{-s} := \phi
                 (\delta ) \, . \notag 
  \end{align}

  (iii) Since $s>2d$, $\phi (\delta ) $ is finite for all $\delta >0$,
  and $\phi $ is a continuous, increasing function that satisfies
  $$
  \lim _{\delta\to 0+} \phi (\delta ) = 0 \, .
  $$
  Consequently, there is a $\delta _0$ such that $\phi (\delta_0)=1$.
  For $\delta < \delta _0$ we then obtain that
  $$
  \|R\|_{op} \leq \phi (\delta ) < 1 \, ,
  $$
  therefore $G$ is invertible on $\ell ^2(\duall )$. This means that
  $\G (g,\duall )$ is a Riesz sequence, and by duality 
  $\G (g,\Lambda )$ is a frame, whenever the matrix $A$ defining
  $\Lambda =A\zdd $ satisfies $\|A\|_{op} < \delta _0$.
\end{proof}

The above proof highlights the role of the duality theorem in the
qualitative existence proof. By emphasizing some technicalities about
\modsp s, one may prove a slightly more general version of the
existence theorem.
We say that $g$ belongs to the \modsp\ $M^1(\rd )$ if
$$
\intrdd |\langle g, \pi (z) g\rangle | \, \dif z < \infty \, .
$$
The proof of Theorem~\ref{t:ex} can be extended to yield the following
result. 
\begin{thm}[~\cite{MR1021139,fg92chui}]
  Assume that $g\in M^1(\rd ) $. Then
  there exists a $\delta _0$ depending on $g$ such that for every
  lattice $\Lambda = A \zdd $ with $\|A\|_{op} < \delta _0$  the Gabor
  system $\G (g, \Lambda )$ is a frame for $\lrd $.  
\end{thm}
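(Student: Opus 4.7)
The plan is to mirror the proof of Theorem~\ref{t:ex}, replacing the pointwise decay $|V_g g(z)| \leq C(1+|z|)^{-s}$ by a Wiener-amalgam-type bound. Normalizing $\|g\|\Lnrm = 1$ and writing the Gramian of $\G(g,\duall)$ on $\ell^2(\duall)$ as $G = \mathrm{I} + R$ with $R$ its off-diagonal part, Schur's test (as in the chain of inequalities culminating in \eqref{mui}) again reduces matters to arranging
\[
  \|R\|_{op} \leq \sum_{\mu \in \duall,\,\mu \neq 0} |V_g g(\mu)| < 1
\]
by choosing $\|A\|_{op}$ small; the remainder of the argument (Neumann inversion of $G$ on $\ell^2(\duall)$, Riesz-sequence property of $\G(g,\duall)$, and passage to the frame property of $\G(g,\Lambda)$ via Theorem~\ref{sec:duality-theory-DualityTheory}) is then identical to that of Theorem~\ref{t:ex}.

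The missing pointwise decay is replaced by the Wiener-amalgam membership
\[
  V_g g \in W(L^\infty,\ell^1)(\rdd), \qquad \text{i.e.,} \quad \sum_{k\in\zdd} \sup_{z \in [0,1)^{2d}+k} |V_g g(z)| < \infty,
\]
a standard structural consequence of $g \in M^1$. This sharpening of the defining bound $V_g g \in L^1$ can be derived by combining the uniform continuity of $V_g g$ with a convolution-type identity coming from the orthogonality relations of Proposition~\ref{sec:short-time-fourier-OrthoRel} (writing $V_g g$ as a suitable convolution of $L^1$ short-time Fourier transforms and applying a Young-type inequality for Wiener amalgam spaces), or simply quoted from the theory of Feichtinger's algebra~\cite{MR1843717}.

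Given this Wiener-amalgam control, the argument closes cleanly. For any $\epsilon \in (0,1)$ choose $R$ so that the tail $\sum_{|k|_\infty > R} \sup_{[0,1)^{2d}+k} |V_g g|$ is less than $\epsilon$. Since $\duall = \I(A^T)^{-1}\zdd$ forces $|\mu| \geq \|A\|_{op}^{-1}$ for every nonzero $\mu \in \duall$, taking $\delta_0$ small enough guarantees (whenever $\|A\|_{op} < \delta_0$) both that distinct points of $\duall$ land in distinct unit cubes $[0,1)^{2d}+k$ and that every non-origin $\mu \in \duall$ sits in a cube with $|k|_\infty > R$. The tail estimate then gives $\|R\|_{op} < \epsilon < 1$, and the proof concludes exactly as in Theorem~\ref{t:ex}.

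The main obstacle is establishing the Wiener-amalgam statement $V_g g \in W(L^\infty,\ell^1)$: it is the precise quantitative upgrade of the defining $L^1$-bound needed to make samples of $V_g g$ on a sparse lattice summable, and it is the only technical step that goes essentially beyond the proof of Theorem~\ref{t:ex}.
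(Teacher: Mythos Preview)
Your proposal is correct and follows exactly the route the paper indicates: the paper does not give a separate proof of this theorem but merely states that ``the proof of Theorem~\ref{t:ex} can be extended,'' and you have supplied precisely that extension by replacing the pointwise decay $|V_gg(z)|\le C(1+|z|)^{-s}$ with the Wiener-amalgam membership $V_gg\in W(L^\infty,\ell^1)$, which is the standard structural fact for $g\in M^1$ (quotable from~\cite{MR1843717}). The remaining steps---Schur's test on the off-diagonal part of the Gramian over $\duall$, the separation estimate $|\mu|\ge\|A\|_{op}^{-1}$ for nonzero $\mu\in\duall$, and the appeal to Theorem~\ref{sec:duality-theory-DualityTheory}---are carried over verbatim from the proof of Theorem~\ref{t:ex}, as intended.
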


These existence results are complemented by an important theorem of
Bekka~\cite{bekka04}: \emph{For every lattice $\Lambda $ with
  $\mathrm{vol} \, (\Lambda )\leq 1$, there exists a window $g\in \lrd $
  such that $\G (g,\Lambda )$ is a frame.}

\subsection{Balian-Low Theorem}

The Balian-Low theorem (BLT) states that for a window with  a mild
decay in time-frequency the necessary density condition must be
strict. In the standard formulation,  the window a Gabor frame $\G (g,\Lambda )$ at
the critical density $\mathrm{vol}\, ( \Lambda ) = 1$ lacks time-frequency
localization. We refer to the surveys~\cite{BHW95,CP06} for a detailed
discussion of the Balian-Low phenomenon in dimension $1$. For higher
dimensions and arbitrary lattices, the BLT follows from an important
deformation result of Feichtinger and Kaiblinger~\cite{FK04} with useful
subsequent improvements in~\cite{MR3192621,MR3336091}.

\begin{thm}\label{t:blt}
  Assume that $g\in M^\infty _{v_s}(\rd )$ for some $s>2d$ and that
  $\G (g, \Lambda )$ is a frame for $\lrd $. Then there exists an
  $\epsilon _0>0$ such that $\G (g, (1+\tau )\Lambda )$ is a
  frame for every $\tau $ with $ |\tau | < \epsilon _0$. 
\end{thm}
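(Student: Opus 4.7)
The plan is to reduce the statement to invertibility of a perturbed Gramian on the adjoint lattice and then exhibit continuous dependence on the deformation parameter $\tau$, concluding via the open mapping theorem.

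First, by the duality theorem (Theorem~\ref{sec:duality-theory-DualityTheory}), $\G(g,\Lambda)$ is a frame if and only if $\G(g,\Lambda^\circ)$ is a Riesz sequence, which is the case if and only if the Gramian $G_{g,\Lambda^\circ}$ is bounded and invertible on $\ell^2(\Lambda^\circ)$. For $\Lambda = A\zdd$ the perturbed lattice is $\Lambda_\tau := (1+\tau)\Lambda = (1+\tau)A\zdd$, and a direct computation from the definitions gives $\Lambda_\tau^\circ = (1+\tau)^{-1}\Lambda^\circ$. So I need only show that the Gramian of $\G(g, (1+\tau)^{-1}\Lambda^\circ)$ remains invertible for small $|\tau|$.

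Second, I transfer all Gramians to the common space $\ell^2(\zdd)$ via the unitary identification $k \mapsto \mu_k^\tau := (1+\tau)^{-1}\I(A^T)\inv k$, so that $\mu_k^0 = \mu_k \in \Lambda^\circ$. Let $G^\tau$ denote the resulting matrix with entries
\begin{equation*}
  G^\tau(k,l) = \langle \pi(\mu_l^\tau)g, \pi(\mu_k^\tau)g \rangle
  = e^{i\phi_\tau(k,l)}\, V_gg(\mu_l^\tau - \mu_k^\tau),
\end{equation*}
where $\phi_\tau(k,l)$ is the usual cocycle phase from Lemma~\ref{sec:short-time-fourier-CovarianceProp}. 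By hypothesis $G^0$ is invertible, so it suffices to show $\|G^\tau - G^0\|_{op} \to 0$ as $\tau \to 0$; then invertibility persists for $|\tau| < \epsilon_0$ and duality finishes the proof.

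The main work is this operator-norm continuity, which I will establish by a Schur test. From $g \in M^\infty_{v_s}(\rd)$ one has the uniform bound
\begin{equation*}
  |G^\tau(k,l) - G^0(k,l)| \leq 2C\bigl(1 + \tfrac12|\I(A^T)\inv(l-k)|\bigr)^{-s}
\end{equation*}
for all $|\tau| \leq 1/2$, and the right-hand side is summable in $l-k$ because $s > 2d$ and $A$ is invertible. Meanwhile, for each fixed pair $(k,l)$ the entry $G^\tau(k,l)$ converges to $G^0(k,l)$ as $\tau \to 0$, using continuity of $V_g g$ and of the phase $\phi_\tau(k,l)$ in $\tau$. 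Applying dominated convergence to the row and column sums yields $\sup_k \sum_l |G^\tau(k,l) - G^0(k,l)| \to 0$ (and the same with the roles exchanged, by the Hermitian symmetry of the Gramians), and Schur's test then gives $\|G^\tau - G^0\|_{op} \to 0$. The main obstacle is precisely the control of the phase factor $\phi_\tau(k,l)$, which contains products like $(\mu_k^\tau)_1 \cdot (\mu_l^\tau - \mu_k^\tau)_2$ that are unbounded in $k$; this is handled by combining the pointwise convergence $e^{i\phi_\tau} \to e^{i\phi_0}$ with the uniform, summable majorant coming from the $M^\infty_{v_s}$ decay of $V_gg$.
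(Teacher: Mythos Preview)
Your overall strategy---duality, then continuity of the Gramian on the adjoint lattice in the dilation parameter---matches the paper's, but the central claim $\|G^\tau - G^0\|_{op}\to 0$ is \emph{false}, and the dominated-convergence step you invoke does not fix it. Write $\rho=(1+\tau)^{-1}$ and $m=l-k$. The phase difference works out to $\phi_\tau(k,k+m)-\phi_0(k,k+m)=(\rho^2-1)\,Q(k,m)$, where $Q$ is a quadratic form in $k$ for fixed $m$. Hence for every $\tau\neq 0$ and every fixed $m$ with $V_gg(\mu_m)\neq 0$ one can choose $k=k(\tau)$ so that this phase difference lies within $o(1)$ of $\pi$; since $V_gg(\rho\mu_m)\to V_gg(\mu_m)$, one obtains
\[
|G^\tau(k,k+m)-G^0(k,k+m)|\;\ge\;2\,|V_gg(\mu_m)|-o(1).
\]
Testing $G^\tau-G^0$ against the unit vector $e_k$ then gives $\|G^\tau-G^0\|_{op}\ge 2\,|V_gg(\mu_m)|-o(1)$, which is bounded away from zero unless $\G(g,\Lambda^\circ)$ happens to be an orthogonal system. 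Your last paragraph identifies exactly this obstruction, but the proposed remedy fails: dominated convergence yields $\sum_l|G^\tau(k,l)-G^0(k,l)|\to 0$ for each \emph{fixed} $k$, whereas Schur's test needs the supremum over $k$. A majorant depending only on $l-k$ cannot upgrade pointwise convergence to uniform convergence when the quantity being majorized does not converge uniformly in $k$, which is precisely the situation here.

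The paper does not compare the full perturbed Gramian to $G^0$. Instead it introduces the \emph{cross}-Gramian $\tilde G^\rho_{\mu\nu}=\langle \pi(\rho\nu)g,\pi(\mu)g\rangle$ (only one argument dilated) and, rather than deducing invertibility of the new Gramian directly, appeals to a Paley--Wiener-type stability theorem for Riesz bases. The paper is explicit that it offers only a proof idea and refers to the cited literature for the complete argument, which is genuinely delicate.
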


\begin{proof}
  We only give the proof idea and indicate where the duality theorem
  enters. Let $\tilde{ \Lambda } = (1+\tau ) \Lambda $, then its
  adjoint lattice is $\tilde{\Lambda }^\circ = (1+\tau )\inv \duall
  $. If $\G (g,\Lambda )$ is a frame, then the Gramian operator $G=
  G_{g,\duall }$ is invertible on $\ell ^2(\duall )$. Set $\rho =
  (1+\tau )\inv $ and  we consider
  the cross-Gramian operator $\tilde{G}^\rho$ with entries
  \begin{equation*}
    \tilde{G}^\rho_{\mu \nu } = \langle \pi (\rho \nu ) g, \pi (\mu )g
    \rangle \qquad \mu , \nu \in \duall \, .
  \end{equation*}
  We argue that
  \begin{equation}
    \label{eq:m}
    \lim _{\rho \to 1} \|\tilde{G}^\rho  - G \|_{op} = 0 \, .
  \end{equation}
  This implies that for $|\rho -1|< \epsilon _0$ for some
  $\epsilon _0$ the cross-Gramian operator $\tilde{ G}^\rho $ is
  invertible on $\ell ^2(\duall )$. Now a perturbation result for
  Riesz bases that goes back to Paley-Wiener (see,
  e.g.,~\cite{MR3495345}) implies that
  $\G (g, (1+\tau )\inv \Lambda )$ is a Riesz sequence, and by the
  duality theorem  $\G(g, (1+\tau ) \Lambda )$ is a frame.
  
  The proof of \eqref{eq:m} is similar to the proof of
  Theorem~\ref{t:ex}. We apply Schur's test to estimate the operator
  norm of $\tilde{G}^\rho -G$.
  Given $\delta >0$, we may choose $R>0$ such that
  $$
  \sum _{\mu : |\mu -\nu | >R} |\tilde{G}^\rho _{\mu \nu } - G_{\mu
    \nu }| < \delta  /2    \qquad \text{ for all } \nu \in \duall \,
  \, \text{ and } \, 
  1/2 < \rho <2 \, ,  
  $$
  and likewise
  $\sum _{\nu : |\mu - \nu |> R} |\tilde{G}^\rho _{\mu \nu } - G_{\mu
    \nu }| < \delta /2 $ for all $\mu \in \duall$.  As in \eqref{mui},
  this is possible because $g\in M^\infty _{v_s}(\Rd)$ guarantees the
  off-diagonal decay of $G$ and $\tilde{G}^\rho $.

  Next, we choose $\epsilon _0> 0$ such that for
  $|\rho -1|<\epsilon _0$
  \begin{equation*}
    \sum _{\mu : |\mu -\nu | \leq R} |\tilde{G}^\rho _{\mu \nu } - G_{\mu
      \nu }| = \sum _{\mu : |\mu -\nu | \leq R} |\langle \pi (\rho \nu
    )g - \pi (\nu )g, \pi (\mu )g \rangle | < \delta /2 
  \end{equation*}
  and 
  $\sum _{\nu : |\mu -\nu | \leq R} |\tilde{G}^\rho _{\mu \nu } -
  G_{\mu \nu }| < \delta /2 $. Combining both estimates yields
  $\|\tilde{G}^\rho - G \|_{op} < \delta $.
\end{proof}

Again, the optimal assumption on $g$ in Theorem~\ref{t:blt} is that it belongs to
$M^1(\rd )$.
 
\begin{cor}
  Assume that $\vol (\Lambda ) = 1$ and $\G (g, \Lambda )$ is a frame
  for $\lrd $. Then $g\not \in M^\infty _{v_s}(\rd )$ for all $s>2d$.
\end{cor}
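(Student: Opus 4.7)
The plan is to prove the corollary by contradiction, playing the Balian--Low deformation theorem (Theorem~\ref{t:blt}) off against the density theorem (Theorem~\ref{sec:density-theorem-DensityThm}(i)). Suppose, contrary to the claim, that $g\in M^\infty_{v_s}(\rd)$ for some $s>2d$ while $\G(g,\Lambda)$ is a frame with $\vol(\Lambda)=1$. I will then produce a dilated lattice of volume strictly greater than $1$ that still yields a Gabor frame, which violates the density bound.

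First I would invoke Theorem~\ref{t:blt}: by the hypothesized \tf\ localization $g\in M^\infty_{v_s}(\rd)$ together with the assumption that $\G(g,\Lambda)$ is a frame, there exists $\epsilon_0>0$ such that $\G(g,(1+\tau)\Lambda)$ is a frame for $\lrd$ for every real $\tau$ with $|\tau|<\epsilon_0$. Next I would compute the volume of the dilated lattice: since $(1+\tau)\Lambda = \bigl((1+\tau)A\bigr)\zdd$, we have
\begin{equation*}
    \vol\bigl((1+\tau)\Lambda\bigr) = |\det((1+\tau)A)| = (1+\tau)^{2d}\,\vol(\Lambda) = (1+\tau)^{2d}.
\end{equation*}
Choosing any $\tau\in(0,\epsilon_0)$ gives $\vol((1+\tau)\Lambda) = (1+\tau)^{2d} > 1$.

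Finally, the density theorem applied to the Gabor frame $\G(g,(1+\tau)\Lambda)$ forces $\vol((1+\tau)\Lambda)\le 1$, which contradicts the strict inequality just derived. Hence no such $s>2d$ can exist, \ie $g\notin M^\infty_{v_s}(\rd)$ for any $s>2d$.

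There is essentially no obstacle here, as all the hard work has already been carried out in Theorems~\ref{sec:density-theorem-DensityThm} and~\ref{t:blt}; the only point worth verifying carefully is the direction of the deformation. One must dilate \emph{outward} (take $\tau>0$) so that the volume increases past the critical value $1$ and collides with the density bound, rather than \emph{inward} (which would only make the lattice denser and tell us nothing).
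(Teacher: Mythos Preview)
Your proof is correct and follows essentially the same route as the paper: argue by contradiction, apply Theorem~\ref{t:blt} to obtain a frame $\G(g,(1+\tau)\Lambda)$ for small $\tau>0$, compute $\vol((1+\tau)\Lambda)=(1+\tau)^{2d}>1$, and contradict the density theorem. Your additional remark about needing $\tau>0$ (outward dilation) is a helpful clarification but not an extra ingredient.
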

\begin{proof}
  If  $g\in M^\infty _{v_s}$ and $\G (g, \Lambda )$ were a frame for
  some lattice $\Lambda $ with $\vol (\Lambda ) = 1$, then
  by Theorem~\ref{t:blt} the   Gabor system $\G (g, (1+\tau )\Lambda ) $
  would also be a frame for some $\tau >0$. But $\vol \big( (1+\tau )\Lambda
  \big) = (1+\tau )^{2d} \vol (\Lambda ) >1$, and this contradicts the
  density theorem. 
\end{proof}

\subsection{The Coarse Structure of Gabor Frames and Gabor Riesz
  Sequences} 

One of the principal questions of Gabor analysis is the question
under which conditions on a window $g$ and a lattice $\Lambda $ the
Gabor system $\G (g,\Lambda )$ is a frame for $\lrd $ or a Riesz
sequence in $\lrd $.  To formalize this, we define the full
frame set $\F _{\mathrm{full}} (g)$ of $g$ to be the set of all
lattices $\Lambda $ such that $\G (g,\Lambda )$ is a frame and the
reduced frame set $\F (g)$ to be the set of all rectangular lattices
$\alpha \zd \times \beta \zd$ such that
$\G(g, \alpha \zd \times \beta \zd )$ is a frame. Formally,
 \begin{align*}
  \F _{\mathrm{full}} (g) &= \{\Lambda  \subseteq \rdd  \,\, \mathrm{
                            lattice }\, : \G(g, \Lambda)
   \text{ is a frame }\} \\
 \F(g) &= \{(\alpha,\beta) \subseteq  \R ^2 _+ : \G(g, \alpha \zd \times \beta
         \zd ) \text{ is a frame }\} \, . 
 \end{align*}

We summarize the results of the previous sections in the main result
about the coarse structure of Gabor frames, i.e., results that hold
for arbitrary Gabor systems over a lattice.

\begin{thm} \label{corase} If $g \in M ^\infty _{v_s} (\rd )$ for some
  $s>2d$ or  in $ M^1(\rd )$, then $\F _{\mathrm{full}} (g)$ is an open
  subset of $\{\Lambda : \vol(\Lambda ) < 1\}$ and contains a
  neighborhood of $0$.

Likewise, $\F(g)$ is open in $\{(\alpha ,\beta) \in  \R ^2 _+ :
\alpha\beta < 1\}$  and contains a neighborhood
of $(0, 0)$ in $\R ^2 _+$.
\end{thm}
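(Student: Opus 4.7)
The theorem bundles three assertions about $\F_{\mathrm{full}}(g)$ — containment in $\{\vol(\Lambda)<1\}$, containing a neighborhood of $0$, and openness — each of which I would read off from a distinct result of the preceding subsections. The rectangular version I would obtain mechanically by restricting the lattice matrix to diagonals $A=\diag(\alpha I_d,\beta I_d)$, for which $\|A\|_{op}=\max(\alpha,\beta)$.

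The inclusion and the neighborhood of $0$ are essentially free. The density theorem (Theorem~\ref{sec:density-theorem-DensityThm}) gives $\vol(\Lambda)\le 1$ for every $\Lambda\in\F_{\mathrm{full}}(g)$, and the corollary following Theorem~\ref{t:blt} upgrades this to strict inequality under $g\in M^\infty_{v_s}$ with $s>2d$; the same deformation argument (as noted after Theorem~\ref{t:blt}) handles the $M^1$ case. That $\F_{\mathrm{full}}(g)$ contains a neighborhood of $0$ is precisely Theorem~\ref{t:ex} (and its $M^1$-analogue), and restricting to diagonal $A$ delivers the corresponding neighborhood of $(0,0)$ in $\R^2_+$.

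Openness is the only step with genuine content. Fix $\Lambda_0=A_0\zdd\in\F_{\mathrm{full}}(g)$; by the duality theorem (Theorem~\ref{sec:duality-theory-DualityTheory}), $\G(g,\Lambda_0^\circ)$ is a Riesz sequence, so the Gramian
\begin{equation*}
G^{(A)}_{k,\ell}=\bigl\langle \pi(\I(A^T)^{-1}\ell)g,\;\pi(\I(A^T)^{-1}k)g\bigr\rangle,\qquad k,\ell\in\zdd,
\end{equation*}
is bounded and invertible on $\ell^2(\zdd)$ when $A=A_0$. I would prove $\|G^{(A)}-G^{(A_0)}\|_{op}\to 0$ as $A\to A_0$ in $GL(2d,\R)$; this keeps $G^{(A)}$ invertible on a matrix neighborhood of $A_0$, and the duality theorem run in reverse returns $A\zdd\in\F_{\mathrm{full}}(g)$. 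The openness of $\F(g)$ is the two-parameter specialization to diagonal $A$.

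\textbf{Main obstacle.} The heart of the proof is the operator-norm continuity $A\mapsto G^{(A)}$, which is the general matrix analogue of \eqref{eq:m}. I would mimic the Schur-test argument from Theorem~\ref{t:blt}: split entries into the near-diagonal block $|k-\ell|\le R$ and a tail $|k-\ell|>R$; the tail is uniformly small for $A$ in a compact neighborhood of $A_0$, since the bound $|V_gg(z)|\le C(1+|z|)^{-s}$ (or the $M^1$-integrability) produces a summable majorant after comparing $\I(A^T)^{-1}k$ with $\I(A_0^T)^{-1}k$, while each individual near-diagonal entry tends to zero by $L^2$-continuity of $A\mapsto\pi(\I(A^T)^{-1}k)g$. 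Keeping the tail estimate uniform in $A$ is the one technical point that requires care.
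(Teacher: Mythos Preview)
Your structural plan matches the paper exactly: Theorem~\ref{corase} is not given a separate proof there but is presented as the compression of the density theorem, the existence theorem (Theorem~\ref{t:ex}), and the deformation/Balian--Low result (Theorem~\ref{t:blt} and its corollary). Your treatment of the inclusion $\F_{\mathrm{full}}(g)\subseteq\{\vol(\Lambda)<1\}$ and of the neighborhood of $0$ is correct and is precisely what the paper intends, and the rectangular specialization is indeed mechanical.

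There is, however, a genuine gap in your openness argument. You propose to show $\|G^{(A)}-G^{(A_0)}\|_{op}\to 0$ via Schur's test and locate the obstacle in the tail $|k-\ell|>R$. The tail is actually the easy part; the real obstruction sits in the near-diagonal block. Using $\pi(z)^*\pi(w)=e^{2\pi i(w_2-z_2)\cdot z_1}\pi(w-z)$ one finds, with $B=\I(A^T)^{-1}$ and $m=k-\ell$,
\[
G^{(A)}_{k,k-m}=e^{2\pi i\,(Bm)_2\cdot(Bk)_1}\,\overline{V_gg(-Bm)},
\]
and the analogous expression for $A_0$. For fixed $m\neq 0$ with $V_gg(-B_0m)\neq 0$, the phase depends on the row index through $(Bk)_1$, and the difference of exponents between $A$ and $A_0$ grows linearly in $|k|$. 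Consequently $\sup_k|G^{(A)}_{k,k-m}-G^{(A_0)}_{k,k-m}|$ does \emph{not} tend to zero as $A\to A_0$, and the Schur bound on the near-diagonal block fails. Your appeal to ``$L^2$-continuity of $A\mapsto\pi(\I(A^T)^{-1}k)g$'' delivers only pointwise convergence in $(k,\ell)$, not the uniformity over rows that Schur's test requires; and the phase does not factor as a diagonal unitary conjugation, so it cannot simply be conjugated away.

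The paper does not resolve this within its pages either: Theorem~\ref{t:blt} treats only scalar dilations $\Lambda\mapsto(1+\tau)\Lambda$, is explicitly billed as a ``proof idea,'' and the general openness is attributed to Feichtinger--Kaiblinger~\cite{FK04} (with refinements~\cite{MR3192621,MR3336091}). Even the cross-Gramian sketch there glosses over an analogous phase issue. So your plan is at the same level of rigor as the paper's own sketch; the substantive difference is that the paper flags the gap and points to the literature, whereas you misidentify where the difficulty lies.
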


Theorem~\ref{corase} should not be underestimated. It 
compresses the efforts of dozens of articles into a single
statement. It contains the existence of Gabor frames,  the density
theorem, and the  Balian-Low theorem. For each result there are now several
different proofs (with subtle differences in the hypotheses) and many
ramifications. What is perhaps new  is the close
connection of the coarse structure of Gabor frames to the duality
theory. 

\section{The Criterion of Janssen, Ron and Shen for Rectangular Lattices}
\label{sec:criterion-ron-shen}

In this and the following section, we consider rectangular lattices of
the form $\Lambda = \alpha \Z^d \times \beta \Z^d$ for
$\alpha, \beta >0$.  Observe that the adjoint of such a lattice is
\begin{equation*}
  \Lambda^\circ = \I \begin{pmatrix}
    \frac{1}{\alpha} I_d & 0 \\
    0 & \frac{1}{\beta} I_d
  \end{pmatrix} \Z^{2d}
  = \tfrac{1}{\beta} \Z^d \times \tfrac{1}{\alpha} \Z^d \text{,}
\end{equation*}
and is  again a rectangular lattice.

For convenience, we denote
$\G(g, \alpha, \beta) := \G(g, \alpha \Z^d \times \beta \Z^d)$. 

\begin{defi}
  Let $g \in L^2(\Rd)$ and $\Lambda = \alpha \Z^d \times \beta \Z^d$
  with $\alpha, \beta >0$.  The
  \emph{pre-Gramian matrix} $P(x)$ is defined
  by
  \begin{equation*}
    P(x)_{j,k} = \overline{g}\big(x+\alpha j - \tfrac{k}{\beta}\big)
    \qquad \forall j,k \in \Z^d \, , 
  \end{equation*}
  and the \emph{Ron-Shen matrix}
  $\RS(x) := P(x)^* P(x)$ has the entries
  \begin{equation*}
    \RS(x)_{k,l} = \sum_{j \in \Z^d}    g\big(x+\alpha j - \tfrac{k}{\beta}\big)
     \overline{g}\big(x+\alpha j - \tfrac{l}{\beta}\big)
     \qquad \forall k,l \in \Z^d \text{.}
  \end{equation*}
\end{defi}

\begin{thm}
  \label{sec:rectangular-lattices-Char}
  Let $g \in L^2(\Rd)$ and $\Lambda = \alpha \Z^d \times \beta \Z^d$
  with $\alpha, \beta >0$ be a rectangular lattice. Then the following
  are equivalent:
  \begin{enumerate}[(i)]
  \item $\G(g, \alpha, \beta)$ is a frame for $L^2(\Rd)$. \label{item:frame}
    
  \item $\G(g, \alpha, \beta)$ is a Bessel sequence and there exists a
    dual window $\gamma \in L^2(\Rd)$ such that
    $\G (\gamma , \alpha, \beta )$ is a Bessel sequence satisfying
    \begin{equation}
      \label{eq:j9}
      \sum _{ j\in \Z^d} \gamma (x+\alpha j ) \bar{g} \big(x+\alpha j -
      \tfrac{k}{\beta}\big) = \beta^d \delta _{k,0} \qquad \forall k\in
      \zd \text{ and a.e. }x \in \Rd \, .
    \end{equation} \label{item:dual window}

  \item $\G(g, \frac{1}{\beta}, \frac{1}{\alpha})$ is
    a Riesz sequence for $L^2(\Rd)$. \label{item:riesz}

  \item There exist positive constants $A,B > 0$ such that for all
    $c \in \ell^2(\Z^d)$ and almost all $x \in \Rd$
    \begin{equation}
      \label{eq:rectangular-lattices-eq1}
      A \|c\|\lnrm^2 \leq \sum_{j \in \Z^d} \Big| \sum_{k \in \Z^d} c_k
        \overline{g}\big(x+\alpha j - \tfrac{k}{\beta}\big)\Big|^2 
        \leq B \|c\|\lnrm^2 \text{.}
    \end{equation} \label{item:ron-shen}
    
  \item There exist positive constants $A,B > 0$ such that the
    spectrum of almost every Ron-Shen matrix is contained in the interval
    $[A,B]$.  This means
    \begin{equation*}
      \sigma(\RS(x)) \subseteq [A,B] \qquad \text{for a.e. } x \in \Rd \text{.}
    \end{equation*} \label{item:spectrum}

  \item The set of pre-Gramians $\{P(x)\}$ is uniformly bounded on
    $\ell^2(\Z^d)$ and has a set of uniformly bounded left-inverses.
    This means that there exist
    $\Gamma(x): \ell ^2 (\zd ) \to \ell ^2(\zd)$ such that
    \begin{align*}
      \Gamma(x) P(x) = I_{\ell^2(\Z^d)} \qquad &\text{for a.e. } x \in \Rd \text{,} \\
      \|\Gamma(x)\| \leq C \qquad  &\text{for a.e. } x \in \Rd
                                     \text{.}
    \end{align*} \label{item:groe-stoe}

  \newcounter{CountRectChar}
  \setcounter{CountRectChar}{\value{enumi}}
  \end{enumerate}
\end{thm}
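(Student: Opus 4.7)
The plan is to dispatch $(i) \Leftrightarrow (ii) \Leftrightarrow (iii)$ by direct appeal to the duality theorem (Theorem~\ref{sec:duality-theory-DualityTheory}), and then to handle the chain $(iii) \Leftrightarrow (iv) \Leftrightarrow (v) \Leftrightarrow (vi)$ via a Ron-Shen-style fiberization. For $(i) \Leftrightarrow (iii)$, one only needs the observation, made at the start of the section, that the adjoint lattice of $\alpha\zd \times \beta\zd$ is exactly $\tfrac{1}{\beta}\zd \times \tfrac{1}{\alpha}\zd$. For $(i) \Leftrightarrow (ii)$, I would unwind the abstract Wexler-Raz biorthogonality $\langle \gamma, \pi(\mu)g\rangle = \vol(\Lambda)\delta_{\mu,0}$ for $\mu = (k/\beta, j/\alpha) \in \duall$ and $\vol(\Lambda) = (\alpha\beta)^d$; this reads
\[\int_{\rd} \gamma(t)\bar g(t - k/\beta)\, e^{-2\pi i (j/\alpha) \cdot t}\, dt = (\alpha\beta)^d \delta_{j,0}\delta_{k,0} \qquad \forall j,k \in \zd,\]
and by Poisson summation these numbers are, up to the factor $\alpha^d$, the Fourier coefficients of the $\alpha$-periodic function $x \mapsto \sum_{m \in \zd} \gamma(x+\alpha m)\bar g(x+\alpha m - k/\beta)$. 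Fourier uniqueness makes the whole system equivalent to \eqref{eq:j9}.

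Next I would set up the fiberization. For a finitely supported $c = (c_{j,k})$, introduce the $\alpha$-periodic functions $F_k(x) := \sum_j c_{j,k} e^{2\pi i (j/\alpha)\cdot x}$ with $\|F_k\|_{L^2([0,\alpha]^d)}^2 = \alpha^d \sum_j |c_{j,k}|^2$. Using $e^{2\pi i j\cdot m}=1$ for $m,j\in\zd$, a direct computation shows that $f := \sum_{j,k} c_{j,k} \pi(k/\beta, j/\alpha) g$ satisfies $f(x+\alpha m) = \sum_k F_k(x)\, g(x+\alpha m - k/\beta) = \overline{(P(x)\overline{\vec F(x)})_m}$; partitioning $\rd$ into translates of $[0,\alpha]^d$ then yields
\[\|f\|\Lnrm^2 = \int_{[0,\alpha]^d}\|P(x)\overline{\vec F(x)}\|\lnrm^2\, dx \quad \text{and} \quad \|c\|\lnrm^2 = \alpha^{-d}\int_{[0,\alpha]^d}\|\vec F(x)\|\lnrm^2\, dx.\]
Extending by density, the map $c \mapsto \alpha^{-d/2} \vec F$ becomes a unitary isomorphism $\ell^2(\zdd) \to L^2([0,\alpha]^d; \ell^2(\zd))$.

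With the fiberization in place, $(iii) \Leftrightarrow (iv)$ becomes routine: an a.e.\ pointwise bound integrates to a global Riesz inequality, and conversely, applying $(iii)$ to the special sequences corresponding to $\vec F(x) = \chi_E(x)\vec c$ for arbitrary measurable $E \subseteq [0,\alpha]^d$ and $\vec c$ from a countable dense subset of $\ell^2(\zd)$ forces an a.e.\ pointwise inequality, which is then extended to all $\vec c$ by a countable union of null sets. For the tail of the chain, $(iv) \Leftrightarrow (v)$ is the self-adjoint identity $\|P(x)\vec c\|\lnrm^2 = \langle \RS(x)\vec c, \vec c\rangle$ combined with the spectral characterization of two-sided bounds; and for $(v) \Leftrightarrow (vi)$ one sets $\Gamma(x) := \RS(x)^{-1}P(x)^*$ to obtain a uniformly bounded left-inverse in the forward direction, while a uniformly bounded left-inverse gives $\|\vec c\|\lnrm \leq C\|P(x)\vec c\|\lnrm$ and hence the lower spectral bound, the upper bound coming from the assumed uniform boundedness of the $P(x)$.

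The main obstacle is the fiberization step: I have to justify the termwise manipulations rigorously (first for finite $c$, then by density), verify that $c \leftrightarrow \vec F$ really is a unitary correspondence between the full spaces, and, most delicately, handle the exceptional null sets in the a.e.\ bounds so that they do not depend on $\vec c$—hence the countable-dense-subset device above. Everything else is bookkeeping combined with standard spectral theory applied fiberwise.
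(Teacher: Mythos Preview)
Your proposal is correct and follows essentially the same route as the paper: the duality theorem handles $(i)\Leftrightarrow(iii)$, Poisson summation on the biorthogonality relation gives $(ii)$, and the fiberization identity $\|f\|_{L^2}^2=\int_{Q_\alpha}\|P(x)\vec F(x)\|_{\ell^2}^2\,dx$ (obtained via the $\alpha$-periodic trigonometric sums you call $F_k$, the paper's $p_k$) reduces $(iii)\Leftrightarrow(iv)$ to testing against vectors of the form $\chi_E(x)\vec c$, after which $(iv)\Leftrightarrow(v)\Leftrightarrow(vi)$ is linear algebra with the pseudo-inverse $\Gamma(x)=R(x)^{-1}P(x)^*$. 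The only noteworthy difference is that you explicitly invoke a countable dense subset of $\ell^2(\zd)$ to produce a single exceptional null set independent of $\vec c$; the paper leaves this step implicit.
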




\begin{proof}
  The equivalence of \eqref{item:frame} and \eqref{item:riesz} is
  Theorem~\ref{sec:duality-theory-DualityTheory}.  The equivalence of
  conditions \eqref{item:ron-shen}, \eqref{item:spectrum} and
  \eqref{item:groe-stoe} is mainly of linguistic nature, the
  mathematical content is in the equivalence
  $\eqref{item:riesz} \Leftrightarrow \eqref{item:ron-shen}$.

  \LRimp{\ref{item:ron-shen}}{\ref{item:spectrum}}: For all sequences
  $c \in \ell^2(\Z^d)$, we have
  \begin{equation*}
    \sum_{j \in \Z^d} \Big| \sum_{k \in \Z^d} c_k \overline{g}\big(x+ \alpha j -
        \tfrac{k}{\beta}\big)\Big|^2 = 
    \langle P(x) c, P(x) c \rangle = \langle \RS(x)c, c \rangle \text{.}
  \end{equation*}
  Hence, inequality~\eqref{eq:rectangular-lattices-eq1} becomes 
  \begin{equation*}
    A \|c\|\lnrm^2 \leq \langle \RS(x) c, c \rangle \leq B \|c\|\lnrm^2
    \qquad \forall c \in \ell^2(\Z^d) \text{,}
  \end{equation*}
  for almost all $x \in \Rd$, which is equivalent to $\sigma(\RS(x))
  \subseteq [A,B]$ for almost all $x \in \Rd$. 

  \Rimp{\ref{item:ron-shen}}{\ref{item:riesz}}: Let
  $c\in \ell ^2(\zdd )$ be a finite sequence and
  $f = \sum _{k,l \in \zd } c_{k,l}
  M_{\frac{l}{\alpha}}T_{\frac{k}{\beta}} g$.
  For fixed $k$ the sum over $l$ is a trigonometric polynomial
  $$
  p_k(x) := \sum_{l \in \Z^d} c_{k,l} e^{2 \pi i \frac{l}{\alpha}
    \cdot x}
  $$ 
  with period $\alpha $ in each coordinate, and its $L^2$-norm over a
  period $Q_{\alpha} : = [0,\alpha]^d$ given by
  \begin{equation*}
    \int_{Q_{\alpha}} |p_k(x)|^2 \dif x = \alpha^d \sum_{l \in \Z^d}
    |c_{k,l}|^2 \, .
  \end{equation*}


  To calculate  the $L^2$-norm of $f$ we use the periodization trick and
  obtain
  \begin{align*}
    \|f\|\Lnrm^2    & = \Big\| \sum_{k \in \Z^d} p_k \cdot T_{\frac{k}{\beta}}g \Big\|\Lnrm^2 \\
     & = \int_{\Rd} \Big| \sum_{k \in \Z^d} p_k(x)  g\big(x-\tfrac{k}{\beta}\big)\Big|^2 \dif x \\
     & = \int_{Q_{\alpha}} \sum_{j \in \Z^d} \Big| \sum_{k \in \Z^d} p_k(x)
       g\big(x+ \alpha j-\tfrac{k}{\beta}\big)\Big|^2 \dif x \text{.}
        \label{eq:rectangular-lattices-eq2}
  \end{align*}
  Next, for every $x\in Q_\alpha $ we apply assumption
  \eqref{eq:rectangular-lattices-eq1} to the integrand
  and obtain 
  \begin{align*}
    \|f\|\Lnrm^2    & \geq \int_{Q_\alpha} A \sum_{k \in \Z^d} |p_k(x)|^2 \dif x \\
     & = \alpha^d A \sum_{k,l \in \Z^d} |c_{k,l}|^2 = \alpha^d A
     \|c\|\lnrm^2 \, .
  \end{align*}
  for all finite sequences  $c \in \ell^2(\Z^{2d})$.  The upper bound
  follows analogously, and thus $\G (g, \tfrac{1}{\beta},
  \tfrac{1}{\alpha})$ is a Riesz sequence.  

  \Rimp{\ref{item:riesz}}{\ref{item:ron-shen}}: By
  assumption, 
  \begin{equation*}
    A \|c\|\lnrm^2 \leq \Big\| \sum_{k,l \in \Z^d} c_{k,l}
    M_{\frac{l}{\alpha}}T_{\frac{k}{\beta}} g \Big\|\Lnrm^2
    \leq B \|c\|\lnrm^2 \text{}
  \end{equation*}
  for all $c \in \ell^2(\Z^{2d})$. We apply this fact to sequences $c$
  of the form $c_{k,l} :=
  a_k b_l$    for $a, b \in \ell^2(\Z^d)$. 
  Then
  $\|c\|_{\ell^2(\Z^{2d})}^2= \|a\|_{\ell^2(\Z^d)}^2
  \|b\|_{\ell^2(\Z^d)}^2 $.
 
  
  Every $p \in L^2(Q_{\alpha})$ can be written as Fourier series
  $p(x) = \sum_{l \in \Z^d} b_l e^{2 \pi i l \cdot \frac{x}{\alpha}}$
  with coefficients $b \in \ell^2(\Z^d)$.  Hence, we obtain for
  arbitrary $a \in \ell^2(\Z^d)$ and $p \in L^2(Q_{\alpha})$
  \begin{equation}
  \begin{aligned}
    \frac{A}{\alpha^d} \|a\|_{\ell^2(\Z^d)}^2 \int_{Q_{\alpha}} &|p(x)|^2 \dif x 
    = A \|a\|_{\ell^2(\Z^d)}^2 \|b\|_{\ell^2(\Z^d)}^2  = A \|c\|_{\ell^2(\Z^{2d})}^2 \\
    & \leq  
    \Big\| \sum_{k,l \in \Z^d} a_k b_l M_{\frac{l}{\alpha}}T_{\frac{k}{\beta}} g \Big\|\Lnrm^2 \\
    & = \int_{\Rd} |p(x)|^2 \Big| \sum_{k \in \Z^d} a_k
    g\big(x - \tfrac{k}{\beta}\big)\Big|^2 \dif x \label{eq:rectangular-lattices-eq11} \\
    & = \int_{Q_{\alpha}} \sum_{j \in \Z^d} |p(x+ \alpha j )|^2 \Big| \sum_{k \in \Z^d} a_k
    g\big(x+\alpha j - \tfrac{k}{\beta}\big)\Big|^2 \dif x \\
    & = \int_{Q_{\alpha}} |p(x)|^2 \sum_{j \in \Z^d} \Big| \sum_{k \in \Z^d} a_k
    g\big(x+\alpha j - \tfrac{k}{\beta}\big)\Big|^2 \dif x \text{.}
  \end{aligned}
  \end{equation}

  Since $L^2(Q_{\alpha})$ contains all characteristic functions of
  measurable subsets in $Q_{\alpha}$,
  \eqref{eq:rectangular-lattices-eq11} implies 
  \begin{equation*}
    \frac{A}{\alpha^d} \|a\|\lnrm^2 \leq \sum_{j \in \Z^d} \Big| \sum_{k \in \Z^d} a_k
        g\big(x+\alpha j - \tfrac{k}{\beta}\big)\Big|^2
        \qquad \text{for a.e. } x \in \Rd 
  \end{equation*}
  for all $a \in \ell^2(\Z^d)$.  The upper bound follows analogously.

  \Rimp{\ref{item:spectrum}}{\ref{item:groe-stoe}}: Suppose that the
  spectrum of almost all $\RS(x)$ is contained in the interval $[A,B]$
  for some positive constants $A,B > 0$. Then the set of pre-Gramians
  is uniformly bounded by $B^{1/2}$ since
  $\RS(x) = P(x)^*P(x)$.

  As $\RS(x)$ is invertible, we may define the pseudo-inverse 
  $\Gamma(x) := \RS(x)^{-1}P(x)^*$.  Then
  \begin{equation*}
    \Gamma(x)P(x) = I_{\ell^2(\Z^d)}
  \end{equation*}
  and
  \begin{equation*}
    \|\Gamma(x)\| \leq \|\RS(x)^{-1}\| \|P(x)\| \leq A^{-1}B^{1/2} \text{.}
  \end{equation*}

  \Rimp{\ref{item:groe-stoe}}{\ref{item:spectrum}}: By assumption,
  every $P(x)$ possesses a left inverse $\Gamma (x) $ with control of
  the operator norm.  This implies
  \begin{equation*}
    \begin{aligned}
      \label{eq:rectangular-lattices-eq3}
      \|c\|\lnrm^2 &= \|\Gamma(x)P(x)c\|\lnrm^2
      \leq  \|\Gamma(x)\|^2 \|P(x)c\|\lnrm^2 \\
      &\leq C^2 \langle \RS(x) c, c \rangle \leq C^2 \|P(x)\|^2
      \|c\|\lnrm^2 \leq C^2 D^2 \|c\|\lnrm^2 \, .
    \end{aligned}
  \end{equation*}
  for all $c \in \ell^2(\Z^{2d})$ and almost all $x \in \Rd$.  This is
  \eqref{item:spectrum}.

  \LRimp{\ref{item:dual window}}{\ref{item:riesz}} and
  \LRimp{\ref{item:dual window}}{\ref{item:groe-stoe}}: Condition
  \eqref{item:dual window} can be understood as as explicit version of
  \eqref{item:groe-stoe}. Alternatively, it is a slight reformulation
  of the biorthogonality condition
  \eqref{eq:duality-theory-BiorthogonalityRelation}, once again with
  the Poisson summation formula:
  \begin{equation*}
    \sum _{ j\in \Z^d} \gamma (x+\alpha j ) \bar{g} \big(x+\alpha j -
    \tfrac{k}{\beta}\big) =  \frac{1}{\alpha^d }\sum _{j\in \zd } \langle
    \gamma , M_{\frac{j}{\alpha }} T_{\frac{k}{\beta} } g\rangle e^{2\pi i  \frac{j}{\beta} \cdot x }
    = \beta^d \delta _{k,0} \, .
  \end{equation*}
\end{proof}

The formulation \eqref{eq:j9} of the biorthogonality is due to
Janssen~\cite{MR1601115}.  Conditions \eqref{item:ron-shen} and
\eqref{item:spectrum} were discovered by Ron and
Shen~\cite{MR1460623}.  The criterion \eqref{item:groe-stoe} is
from~\cite{MR3053565}.

The results of Ron and Shen are more general and hold for
\emph{separable lattices} of the form $P \Z^d \times Q \Z^d$ with
invertible, real-valued $d \times d$ matrices $P, Q$.  In this setting,
Theorem~\ref{sec:rectangular-lattices-Char} holds with the appropriate
modifications (just replace the scalar-multiplication with
$\alpha, \beta, 1 / \alpha, 1 / \beta$ by the matrix-vector
multiplication with $P$, $Q$, $P^{-1}$, $Q^{-1}$ and use appropriate
fundamental domains).

Condition \eqref{item:ron-shen} has been the master tool of Janssen in
his work on exponential windows~\cite{MR1964306} and ``Zak transforms
with few zeros''~\cite{MR1955931}.  The construction of a dual window
was used by Janssen~\cite{MR1310658} to give a signal-analytic proof
of the Theorem of Lyubarski and Seip. Recently, the biorthogonality
condition for the dual window was used successfully in the analysis of
totally positive windows of finite type~\cite{MR3053565}.
Christensen \etal~\cite{MR2563261} have used \eqref{eq:j9} to
compute explicit formulas for dual windows.

Condition \eqref{item:ron-shen} also lends itself to proving qualitative
sufficient conditions.  By imposing the diagonal dominance of $R(x)$,
one can derive some conditions on $g$ to guarantee that
$\G (g, \alpha, \beta )$ is a frame. The easiest case is $R(x)$ being
a family of diagonal matrices. In this way one obtains the
``painless
non-orthogonal expansions'' of Daubechies, Grossman, and
Meyer~\cite{MR836025}. This fundamental result precedes the era of
wavelets and Gabor analysis, and yields all  Gabor frames that are
used for real applications, e.g.,  in signal
analysis or speech processing. 

\begin{thm}[Painless non-orthogonal expansions]
  \label{sec:criterion-ron-shen-PainLess}
  Suppose $g \in L^{\infty}(\Rd)$ with $\supp g \subseteq [0,L]^d$.
  If $\alpha \leq L$ and $\beta \leq \frac{1}{L}$, then
  $\G(g, \alpha, \beta)$ is a frame if and only if
  \begin{equation*}
    0 < \einf_{x \in \Rd} \sum_{k \in \Z^d} |g(x-\alpha k)|^2 \text{.}
  \end{equation*}
\end{thm}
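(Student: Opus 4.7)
The plan is to invoke the equivalence of conditions (i) and (v) of Theorem~\ref{sec:rectangular-lattices-Char}, which characterizes the frame property by a uniform spectral bound on the Ron--Shen matrices $\RS(x)$. The structural observation is that the compact support of $g$ together with $\beta\leq 1/L$ forces $\RS(x)$ to be diagonal for almost every $x\in\Rd$, so the spectral bound collapses to pointwise bounds on a single $\alpha$-periodic function.

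First I would verify that the off-diagonal entries vanish almost everywhere. For $k\neq l$,
\[
  \RS(x)_{k,l}=\sum_{j\in\zd} g\bigl(x+\alpha j-\tfrac{k}{\beta}\bigr)\,\overline{g}\bigl(x+\alpha j-\tfrac{l}{\beta}\bigr),
\]
and a nonzero summand forces $x+\alpha j$ to lie simultaneously in the translated cubes $\tfrac{k}{\beta}+[0,L]^d$ and $\tfrac{l}{\beta}+[0,L]^d$. Because $k\neq l$, some coordinate differs by at least $1/\beta\geq L$, so these cubes intersect in a set of measure zero; hence $\RS(x)_{k,l}=0$ for a.e.\ $x$, and $\RS(x)$ is diagonal almost everywhere.

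Next I would identify the diagonal entries with translates of one $\alpha$-periodic function. Setting $G(y):=\sum_{j\in\zd}|g(y+\alpha j)|^2$, one has $\RS(x)_{k,k}=G(x-k/\beta)$. The hypotheses $g\in L^\infty(\Rd)$, $\supp g\subseteq[0,L]^d$ and $\alpha\leq L$ guarantee that only $O((L/\alpha)^d)$ terms in $G(y)$ are nonzero at each $y$, so $\esup G<\infty$. Since $\RS(x)$ is diagonal, its spectrum is the closure of $\{G(x-k/\beta):k\in\zd\}$; by $\alpha$-periodicity of $G$ and translation-invariance of Lebesgue measure, the condition $\sigma(\RS(x))\subseteq[A,B]$ for a.e.\ $x$ with some $A,B>0$ is equivalent to $A\leq\einf G$ and $\esup G\leq B$. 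The upper bound is automatic, so the spectral condition reduces to $0<\einf G$.

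Applying the equivalence (i)$\Leftrightarrow$(v) of Theorem~\ref{sec:rectangular-lattices-Char} then yields the desired characterization. The only nontrivial step is the support-disjointness argument of the second paragraph, which is where the hypothesis $\beta\leq 1/L$ is essential; once $\RS(x)$ is diagonalized, the reduction to a scalar essential-infimum condition is routine bookkeeping.
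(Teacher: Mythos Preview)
Your proposal is correct and follows essentially the same route as the paper: both arguments observe that the support hypothesis $\beta\leq 1/L$ forces the translates $T_{k/\beta}g$ to have (a.e.) disjoint supports, so each $\RS(x)$ is diagonal, and then invoke the Ron--Shen criterion (Theorem~\ref{sec:rectangular-lattices-Char}). Your write-up is slightly more detailed---explicitly identifying the diagonal entries with translates of the $\alpha$-periodic function $G$ and checking the upper bound---whereas the paper simply states that a diagonal matrix is bounded and invertible iff its diagonal is bounded above and away from zero.
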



\begin{proof}
  By assumption, we have $\frac{1}{\beta} \geq L$. If $\frac{1}{\beta}
  > L$, then the  supports of $T_{\frac{k}{\beta}}g$ and $T_{\frac{l}{\beta}}g$  are disjoint for
  $k \neq l$; if $\frac{1}{\beta} =  L$, then the supports of
  $T_{\frac{k}{\beta}}g$ and $T_{\frac{l}{\beta}}g$ overlap on a set
  of measure zero and we may modify $g$ so that $T_{\frac{k}{\beta}}g
  \cdot T_{\frac{l}{\beta}}g= 0$ everywhere for $k\neq l$. 
  Consequently,
  \begin{equation*}
    \begin{aligned}
      \label{eq:rectangular-lattices-eq5}
      \RS(x)_{k,l} 
      &= \sum_{j \in \Z^d}  g\big(x+j \alpha - \tfrac{k}{\beta}\big)
       \overline{g}\big(x+j \alpha - \tfrac{l}{\beta}\big) \\
      &= \sum_{j \in \Z^d} \big|g\big(x+j \alpha - \tfrac{k}{\beta}\big)\big|^2\,
      \delta_{k,l}
      \text{,}
    \end{aligned}
  \end{equation*}
and thus $R(x)$ is a diagonal matrix for almost all $x$. Clearly,  a
diagonal matrix is bounded and invertible \fif\ its diagonal 
is bounded above and away from zero, therefore the assertion of
Theorem~\ref{sec:criterion-ron-shen-PainLess} follows  immediately.
\end{proof}

Theorem~\ref{sec:rectangular-lattices-Char} can also be reformulated
in terms of frames for $L^2(\td )$. For this we 
recall that the \emph{Zak transform} with respect to the
parameter $\alpha > 0$ is defined by
\begin{equation*}
  \label{eq:zak-transform-ZT}
  Z_{\alpha}f(x, \xi) := \sum_{k \in \Z^d} f(x - \alpha k) e^{2 \pi i \alpha k \cdot \xi}
  \text{.}
\end{equation*}
Most characterizations of a Gabor frame over a rectangular lattice can
be formulated by means of the Zak transform. Here is the general
version attached to Theorem~\ref{sec:rectangular-lattices-Char}.

\begin{thm}
  \label{sec:zak-transform-addChar}
  Let $g \in L^2(\Rd)$ and $\Lambda = \alpha \Z^d \times \beta \Z^d$
  with $\alpha, \beta > 0$. Then the following
  are equivalent:
  \begin{enumerate}[(i)]
  \item $\G(g, \alpha, \beta)$ is a frame for $L^2(\Rd)$.
 
  \item  $\{ Z_{\frac{1}{\beta}}g(x+\alpha j, \beta \, . \, ): j
  \in \Z^d \}$
  is a frame for $L^2(\T^d)$ for almost all $x \in \Rd$ with frame
  bounds independent of $x$. 
  \end{enumerate}
\end{thm}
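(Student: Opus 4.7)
The plan is to reduce the statement to the pre-Gramian / Ron-Shen criterion \eqref{item:spectrum} of Theorem~\ref{sec:rectangular-lattices-Char} by identifying the pre-Gramian matrix $P(x)$ as the analysis operator of the system $\{Z_{\frac{1}{\beta}}g(x+\alpha j, \beta\cdot)\}_{j\in\zd}$ with respect to the Fourier basis on $\T^d$. The bridge is that, by definition of the Zak transform,
\begin{equation*}
  Z_{\frac{1}{\beta}}g(x+\alpha j, \beta\xi) = \sum_{k\in \zd} g\big(x+\alpha j - \tfrac{k}{\beta}\big)\, e^{2\pi i k\cdot \xi}\, ,
\end{equation*}
so the $k$-th Fourier coefficient of $\xi\mapsto Z_{\frac{1}{\beta}}g(x+\alpha j,\beta\xi)$ equals $\overline{P(x)_{j,k}}$.

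First I would fix $x\in\rd$ and an arbitrary $F\in L^2(\T^d)$ with Fourier expansion $F(\xi)=\sum_{k\in\zd} c_k e^{2\pi i k\cdot\xi}$. Using the Fourier coefficients computed above together with Parseval's identity on $\T^d$, I would compute
\begin{equation*}
  \big\langle F,\, Z_{\frac{1}{\beta}}g(x+\alpha j,\beta\cdot)\big\rangle_{L^2(\T^d)}
  = \sum_{k\in\zd} c_k \,\overline{g\big(x+\alpha j-\tfrac{k}{\beta}\big)} = \big(P(x)c\big)_j \, ,
\end{equation*}
and hence
\begin{equation*}
  \sum_{j\in\zd} \big|\langle F, Z_{\frac{1}{\beta}}g(x+\alpha j,\beta\cdot)\rangle\big|^2 = \|P(x)c\|_{\ell^2}^2 = \langle \RS(x)c,c\rangle \, ,
\end{equation*}
while Parseval again gives $\|F\|_{L^2(\T^d)}^2 = \|c\|_{\ell^2}^2$. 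Therefore, for fixed $x$, the system $\{Z_{\frac{1}{\beta}}g(x+\alpha j,\beta\cdot)\}_j$ is a frame for $L^2(\T^d)$ with bounds $A,B$ if and only if $A\|c\|^2\le \langle \RS(x)c,c\rangle\le B\|c\|^2$ for all $c\in\ell^2(\zd)$, i.e., $\sigma(\RS(x))\subseteq [A,B]$.

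Asking this uniformly for almost every $x$ is then literally condition \eqref{item:spectrum} of Theorem~\ref{sec:rectangular-lattices-Char}, which is equivalent to \eqref{item:frame}, i.e., to $\G(g,\alpha,\beta)$ being a frame for $L^2(\rd)$. This yields the desired equivalence in both directions.

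The only delicate point I anticipate is a bookkeeping one: verifying that the Fourier series in $\xi$ of $Z_{\frac{1}{\beta}}g(x+\alpha j,\beta\cdot)$ converges in $L^2(\T^d)$ for almost every $x$, so that Parseval may legitimately be applied. This follows from the fact that for a.e.\ $x$, the sequence $(g(x+\alpha j-\tfrac{k}{\beta}))_{k\in\zd}$ lies in $\ell^2(\zd)$ (by Fubini, since $g\in L^2(\rd)$), which makes the Zak-transform slice a bona fide $L^2(\T^d)$-function and justifies the computation on the a.e.\ set of admissible $x$.
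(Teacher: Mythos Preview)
Your proof is correct and follows essentially the same route as the paper: both arguments identify the inner product $\langle F, Z_{1/\beta}g(x+\alpha j,\beta\cdot)\rangle$ with $\sum_k c_k\,\overline{g}(x+\alpha j-\tfrac{k}{\beta})$ via Parseval and then invoke Theorem~\ref{sec:rectangular-lattices-Char}. The only cosmetic difference is that you cite condition~\eqref{item:spectrum} while the paper cites the equivalent condition~\eqref{item:ron-shen}; your added remark about Fubini justifying the $L^2(\T^d)$-convergence of the Zak slice is a welcome technical clarification that the paper leaves implicit.
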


\begin{proof} 
  By Theorem~\ref{sec:rectangular-lattices-Char}, $\G(g,\alpha,
  \beta)$ is a Gabor frame if and only if there exist positive
  constants $A,B > 0$ such that
  \begin{equation}\label{eq:c3}
    A \|c\|\lnrm^2 \leq \sum_{j \in \Z^d} \Big| \sum_{k \in \Z^d} c_k
    \overline{g}\big(x+\alpha j - \tfrac{k}{\beta}\big)\Big|^2 
    \leq B \|c\|\lnrm^2
  \end{equation}
  for all $c \in \ell^2(\Z^d)$ and almost all $x \in \Rd$. 

  Using Parseval's identity for Fourier series, we interpret the inner
  sum over $k$ as an inner product of periodic $L^2$-functions. The
  Fourier series of $c$ is
  $\hat{c}(\xi ) = \sum _{k\in \zd } c_k e^{2\pi i k \cdot \xi }$, and
  the Fourier series of the sequence
  $\big(g(x+\alpha j - k/\beta) \big)_{k \in \zd }$ (for fixed $x$) is
  precisely the Zak transform
  $$
  Z_{\frac{1}{\beta}}g(x+\alpha j, \beta \xi) = \sum_{k \in \Z^d}
  g\big(x+ \alpha j - \tfrac{k}{\beta}\big) e^{2 \pi i k \cdot \xi}  \,
  .
  $$
  Consequently,
  $$
  \sum_{k \in \Z^d} c_k \overline{g}\big(x+\alpha j -
  \tfrac{k}{\beta}\big) = \int _{\T ^d} \hat{c}(\xi ) \overline{
    Z_{\frac{1}{\beta}}g(x+\alpha j, \beta \xi)} \dif \xi
  $$
  and \eqref{eq:c3} just says that the set
  $\{ Z_{\frac{1}{\beta}}g(x+\alpha j, \beta \cdot ) : j\in \zd \}$ is
  a frame for $L^2(\T ^d)$ for almost all $x\in \rd $. Furthermore, the
  frame bounds can be chosen to be $A$ and $B$ independent of $x$.
\end{proof}

\section{Zak Transform Criteria for Rational Lattices --- The Criteria of Zeevi and Zibulski}
\label{sec:crit-zeevi-zibulski}
All criteria formulated so far are expressed by the invertibility of
an infinite matrix or of an operator on an infinite dimensional
space. For rectangular lattices $\Lambda = \alpha \zd \times \beta \zd
$ with $\alpha \beta \in \Qf$ one may further reduce the effort and
study the invertibility of a family of finite-dimensional matrices. 




Assume that $\alpha \beta = p/q\leq 1$ for
$p,q \in \mathbb{N}$. In order to simplify the labeling of vectors and
matrices, we define $ E_q := \{0,1, \dots , q-1\}^d$ and
$ E_p := \{0,1, \dots, p-1\}^d$. We then write $j\in \zd $ uniquely as
$j=ql+r$ for $l\in \zd$ and $r\in E_q$. Using the quasi-periodicity of
the Zak transform, we obtain
\begin{equation*}
  \label{eq:c5}
  Z_{\frac{1}{\beta}}g(x+\alpha j, \beta \xi) =
  Z_{\frac{1}{\beta}}g\big(x+\tfrac{p}{q\beta } (ql+r), \beta \xi \big) =
  e^{2\pi i pl \cdot \xi}     Z_{\frac{1}{\beta}}g\big(x+
  \tfrac{p}{q\beta}r, \beta \xi \big ) \, .
\end{equation*}
  Thus for rational
values of $\alpha \beta $, we obtain a function system which factors
into certain  complex exponentials and some functions. The frame
property of such a system is characterized in the following lemma. 

\begin{lem}
  \label{l:c1}
  Let $\{h_r : r \in F\} \subseteq L^2(\td )$ be a finite set and
  $p \in \mathbb{N}$ such that $\mathrm{card}\, F \geq \mathrm{card}\,
  E_p = p^d$.  Furthermore, let
  $\A (\xi ) $ be the matrix with entries
  $\A (\xi ) _{r,s} = \overline{h_r} (\xi + \tfrac{s}{p})$ for
  $r\in F, s\in E_p$. Then the following are equivalent:
  \begin{enumerate} [(i)]
  \item The set
    $\{ e^{2\pi i p l \cdot \xi } h_r(\xi) : l\in \zd , r\in F\}$ is a
    frame for $L^2(\td )$.
  \item There exist $A,B>0$ such that the singular values of
    $\A (\xi )$ are contained in $[A^{1/2},B^{1/2}]$ for almost all
    $\xi \in \td $.
  \item There exist $A,B>0$ such that
    $\sigma (\A ^*(\xi ) \A (\xi )) \subseteq [A,B]$ for almost all
    $\xi \in \td $.
  \end{enumerate}
\end{lem}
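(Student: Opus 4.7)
The plan is to show $(ii)\Leftrightarrow(iii)$ trivially and reduce $(i)\Leftrightarrow(iii)$ to a pointwise matrix inequality using Parseval and a periodization argument on $\mathbb{T}^d$.

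The equivalence $(ii)\Leftrightarrow(iii)$ is immediate because the eigenvalues of $\mathcal{A}^*(\xi)\mathcal{A}(\xi)$ are precisely the squares of the singular values of $\mathcal{A}(\xi)$, so $\sigma(\mathcal{A}^*\mathcal{A}) \subseteq [A,B]$ is equivalent to the singular values of $\mathcal{A}$ lying in $[A^{1/2},B^{1/2}]$.

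For the main equivalence $(i)\Leftrightarrow(iii)$, the plan is to rewrite the frame sum. For $f \in L^2(\mathbb{T}^d)$ I would first use the partition $\mathbb{T}^d = \bigsqcup_{s\in E_p}(s/p + [0,1/p)^d)$ to write
\begin{equation*}
  \langle f, e^{2\pi i p l \cdot \xi} h_r\rangle = \int_{[0,1/p)^d} e^{-2\pi i p l \cdot \xi}\, F_r(\xi)\, \mathrm{d}\xi,
\end{equation*}
where $F_r(\xi) := \sum_{s\in E_p} f(\xi + s/p)\,\overline{h_r(\xi + s/p)}$; the phase $e^{-2\pi i p l\cdot s/p}=1$ makes this periodization work cleanly. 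Since $\{p^{d/2}e^{2\pi i p l \cdot \xi}\}_{l\in\Z^d}$ is an orthonormal basis of $L^2([0,1/p)^d)$, Parseval gives
\begin{equation*}
  \sum_{l\in\Z^d} \bigl|\langle f, e^{2\pi i p l \cdot \xi} h_r\rangle\bigr|^2 = p^{-d}\int_{[0,1/p)^d} |F_r(\xi)|^2\,\mathrm{d}\xi.
\end{equation*}

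Next I would identify the vectorial structure. Let $\mathbf{f}(\xi) := (f(\xi + s/p))_{s\in E_p}\in \mathbb{C}^{p^d}$. Then $F_r(\xi) = (\mathcal{A}(\xi)\mathbf{f}(\xi))_r$, so summing over $r\in F$,
\begin{equation*}
  \sum_{r\in F}\sum_{l\in\Z^d} \bigl|\langle f, e^{2\pi i p l\cdot \xi} h_r\rangle\bigr|^2 = p^{-d}\int_{[0,1/p)^d}\langle \mathcal{A}^*(\xi)\mathcal{A}(\xi)\mathbf{f}(\xi),\mathbf{f}(\xi)\rangle\,\mathrm{d}\xi,
\end{equation*}
while the same periodization yields $\|f\|_{L^2(\mathbb{T}^d)}^2 = \int_{[0,1/p)^d}\|\mathbf{f}(\xi)\|^2\,\mathrm{d}\xi$. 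Since $f\mapsto \mathbf{f}$ is a unitary isomorphism from $L^2(\mathbb{T}^d)$ onto $L^2([0,1/p)^d;\mathbb{C}^{p^d})$, a standard density argument (taking $\mathbf{f}$ concentrated near any Lebesgue point of $\xi \mapsto \mathcal{A}^*\mathcal{A}(\xi)$ and equal to an arbitrary $v\in\mathbb{C}^{p^d}$ there) shows that the frame inequality for the system $\{e^{2\pi i p l\cdot \xi}h_r\}$ is equivalent to the pointwise estimate $A p^d\|v\|^2 \le \langle \mathcal{A}^*(\xi)\mathcal{A}(\xi)v,v\rangle \le B p^d\|v\|^2$ for a.e. $\xi$ and all $v\in\mathbb{C}^{p^d}$, which is exactly $(iii)$ (up to the irrelevant factor $p^d$ that can be absorbed into $A,B$).

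The main technical obstacle is the measurable selection / density step that passes between the integrated inequality and the pointwise spectral statement; I would handle this by a routine Lebesgue-point argument, choosing $\mathbf{f}$ of the form $v\cdot\mathbf{1}_E$ for small balls $E$ around a.e. $\xi$, exploiting that $\xi\mapsto \mathcal{A}^*(\xi)\mathcal{A}(\xi)$ is measurable with values in the finite-dimensional space of $|F|\times|F|$ matrices. The remaining bookkeeping is keeping track of the factor $p^d$ between the two normalizations.
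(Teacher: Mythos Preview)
Your argument is correct and follows essentially the same route as the paper: periodize over the cosets $\xi+s/p$, recognize the inner products as Fourier coefficients on $Q_{1/p}$, apply Parseval, and identify the frame sum with $p^{-d}\int_{Q_{1/p}}\langle \mathcal{A}^*(\xi)\mathcal{A}(\xi)\mathbf{f}(\xi),\mathbf{f}(\xi)\rangle\,\mathrm{d}\xi$. The only genuine difference is in the passage from the integrated inequality to the pointwise spectral bound: the paper invokes a measurable diagonalization of $\mathcal{A}^*\mathcal{A}$ (citing Azoff) and then reads off the eigenvalue bounds, whereas you use a Lebesgue-point argument with test functions $\mathbf{f}=v\cdot\mathbf{1}_E$. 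Your route is slightly more elementary, since it avoids the measurable-selection machinery and works directly because the entries of $\mathcal{A}^*\mathcal{A}$ are locally integrable; the paper's route makes the role of the eigenvalues more explicit. One small slip: $\mathcal{A}(\xi)$ is $|F|\times p^d$, so $\mathcal{A}^*(\xi)\mathcal{A}(\xi)$ is $p^d\times p^d$, not $|F|\times|F|$ as you wrote; this does not affect the argument.
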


The condition  $\mathrm{card}\, F \geq p^d $ is essential, otherwise the 
matrix $\A (\xi ) $ cannot be injective and 
$\A ^*(\xi ) \A (\xi )$ cannot be invertible.

\begin{proof}
  For $f\in L^2(\td )$ and $\xi \in Q_{1/p}= [0,\frac{1}{p}]^d$ we
  write  the vector $y(\xi ) = \big(f(\xi +
  \frac{s}{p})\big)_{s\in E_p}$. Then the inner product of $f$ with the
  frame functions $h_r(\xi )  e^{2\pi i p l \cdot \xi }$ can be written as 
  \begin{align*}
    \langle f , h_r e^{2\pi i pl\cdot \xi } \rangle &= \int
    _{[0,\frac{1}{p}]^d} \sum _{s \in E_p} f(\xi +\tfrac{s}{p})
      \overline{h_r} (\xi + \tfrac{s}{p}) e^{-2\pi i pl \cdot \xi} \, \dif \xi \\
   &= \int _{[0,\frac{1}{p}]^d} \big(\A (\xi ) y(\xi )\big)_r  \,
    e^{-2\pi i pl \cdot \xi } \, \dif \xi \, .
  \end{align*}

  Since $\{p^{d/2} e^{2\pi i pl\cdot \xi}: l\in \zd \}$ is an
  orthonormal basis for $L^2(Q_{1/p})$, we now obtain
  \begin{align*}
    \sum _{l\in \zd } \sum _{r\in F} |\langle f, h_r e^{2\pi i pl\cdot
    \xi }\rangle |^2 &=   \sum _{r\in F } \sum _{l\in \zd } \Big| 
                       \int _{Q_{1/p} } (\A (\xi ) y(\xi ))_r e^{-2\pi
                       i pl\cdot \xi } \,  \dif \xi \Big| ^2 \\
                     &= \frac{1}{p^d} \sum _{r\in F}    \int _{Q_{1/p} } |(\A   (\xi ) y(\xi ))_r |^2
                       \, \dif \xi \\
                     &= \frac{1}{p^d}\int _{Q_{1/p}} |\A (\xi ) y(\xi ) |^2 \, \dif \xi \, .
  \end{align*}

  If the singular values of $\A $ are all in an interval
  $[A^{1/2},B^{1/2}]$, then
  $|\A (\xi ) y(\xi ) |^2 = \langle \A (\xi )^* \A (\xi ) y(\xi),
  y(\xi )\rangle \geq A |y(\xi )|^2$.  Therefore

  \begin{align}
    \sum _{l\in \zd } \sum _{r\in F} |\langle f, h_r e^{2\pi i pl\cdot
    \xi }\rangle |^2  &=  \frac{1}{p^d} \int _{Q_{1/p}} |\A (\xi ) y(\xi ) |^2 \,
                        \dif \xi  \notag \\
                      &\geq \frac{A}{p^d}  \int _{Q_{1/p}} | y(\xi ) |^2 \, \dif \xi 
                        \label{eq:c13} \\
                      &= \frac{A}{p^d} \int _{Q_{1/p}} \sum _{s\in E_p} | f(\xi +\tfrac{s}{p} ) |^2 \,
                        \dif \xi  = \frac{A}{p^d} \|f\|_{L^2(\td ) } ^2 \, . \notag 
  \end{align}
  Similarly, for the upper frame inequality. Thus the set
  $\{h_r(\xi ) e^{2\pi i pl\cdot \xi } : l\in \zd, r\in F \}$ is a
  frame for $L^2(\td )$.

  
  Conversely, assume that
  $\{h_r(\xi ) e^{2\pi i pl\cdot \xi } : l\in \zd,  r\in F \}$ is a
  frame for $L^2(\td )$.  Then \eqref{eq:c13} says that for all
  $f\in L^2(\td )$ with associated vector-valued function
  $y(\xi ) = \big(f(\xi + \frac{s}{p})\big)_{s\in E_p}$ we must have
  \begin{equation}
    \label{eq:c17}
    \frac{A}{p^d} \int _{Q_{1/p}} | y(\xi ) |^2 \, \dif \xi \leq
    \frac{1}{p^d} \int _{Q_{1/p}} |\A (\xi ) y(\xi ) |^2 \,
    \dif \xi  \leq \frac{B}{p^d} \int _{Q_{1/p}} | y(\xi ) |^2 \, \dif \xi \, .  
  \end{equation} 


  Now we diagonalize $ \A (\xi )^* \A (\xi )$.  Since $\A^*\A$ is a
  measurable matrix-valued function on $\T^d$, its diagonalization can
  be chosen to be measurable (see Azoff~\cite{MR0327799}).  This means that
  there exist two measurable matrix-valued functions $\U, \D$ such
  that $\U(\xi)$ is a unitary matrix, $\D (\xi)$ is of diagonal form
  and $ \A (\xi )^* \A (\xi ) = \U(\xi)^*\D(\xi)\U(\xi)$ for all
  $\xi \in \T^d$.


  Hence \eqref{eq:c17} is equivalent to
  \begin{equation}
    \label{eq:c19}
    A \int _{Q_{1/p}} | \tilde{y}(\xi ) |^2 \, \dif \xi \leq
    \int _{Q_{1/p}} \langle \D (\xi ) \tilde{y}(\xi ), \tilde{y}(\xi) \rangle \,
    \dif \xi  \leq B \int _{Q_{1/p}} | \tilde{y}(\xi ) |^2 \, \dif \xi 
  \end{equation}
  for all vector-valued functions $\tilde{y}(\xi) = \U(\xi) y(\xi)$
  with components in $L^2(Q_{1/p})$.  

  Clearly, inequality \eqref{eq:c19} can only hold if
  $\sigma(\D(\xi)) = \sigma ( \A (\xi )^* \A (\xi )) \subseteq [A,B]$
  for almost all $\xi \in Q_{1/p}$.
\end{proof}

We now apply this lemma to the set
$\{e^{2\pi i p l \cdot \xi } Z_{\frac{1}{\beta} } (x+\tfrac{p}{q\beta}
r,\beta \xi ) : l\in \Z^d, r\in E_q \}$
and obtain the characterization of Zeevi and Zibulski for rational
rectangular lattices~\cite{MR1448221,MR1601071}.

\begin{thm}
  Let $g\in \lrd $ and $\alpha \beta = p/q \in \mathbb{Q}$ with
  $p/q \leq 1$.  For $x,\xi \in \rd $ let $\Q (x,\xi )$ be the matrix
  with entries
  \begin{equation*}
    \label{eq:c6}
    \Q (x,\xi )_{r,s} = Z_{\frac{1}{\beta} } g\big(x+ \tfrac{p}{\beta q} r, \beta \xi +
    \tfrac{\beta s}{p}\big) \qquad \forall r\in E_q, s\in E_p \, .
  \end{equation*}
  The Gabor family $\G (g,\alpha , \beta  )$ is a frame
  for $\lrd $ \fif\ the singular values of $\Q (x,\xi )$ are
  contained in an interval $[A^{1/2},B^{1/2}] \subseteq (0,\infty )$ for almost
  all $x, \xi \in \rd $.
\end{thm}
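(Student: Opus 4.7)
The plan is to combine Theorem~\ref{sec:zak-transform-addChar} with Lemma~\ref{l:c1} via the quasi-periodicity of the Zak transform. By Theorem~\ref{sec:zak-transform-addChar}, $\G(g, \alpha,\beta)$ is a frame for $L^2(\rd)$ if and only if the family $\{Z_{\frac{1}{\beta}}g(x+\alpha j, \beta\,\cdot\,): j\in \zd\}$ is a frame for $L^2(\td)$ for almost every $x\in \rd$ with frame bounds independent of $x$. So for each fixed $x$ I would analyze this $L^2(\td)$-frame condition.

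To do so I would exploit that $\alpha j = \tfrac{p}{q\beta} j$ and write $j = ql+r$ uniquely with $l \in \zd$ and $r \in E_q$. The quasi-periodicity displayed before Lemma~\ref{l:c1} gives
\begin{equation*}
  Z_{\frac{1}{\beta}}g(x+\alpha j, \beta \xi)
  = e^{2\pi i p l \cdot \xi}\, Z_{\frac{1}{\beta}}g\big(x+ \tfrac{p}{q\beta}r, \beta \xi\big),
\end{equation*}
so defining $h_r^{x}(\xi) := Z_{\frac{1}{\beta}}g(x+\tfrac{p}{q\beta}r, \beta\xi)$ for $r\in E_q$, the system in question is precisely $\{e^{2\pi i p l \cdot \xi} h_r^{x}(\xi) : l\in \zd,\ r\in E_q\}$. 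Since $\alpha\beta = p/q \le 1$ we have $q\ge p$, hence $\card E_q = q^d \ge p^d = \card E_p$, so the cardinality hypothesis of Lemma~\ref{l:c1} is satisfied with $F=E_q$.

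Now I would apply Lemma~\ref{l:c1}: the system is a frame for $L^2(\td)$ with bounds $A,B$ if and only if the singular values of the matrix $\A^{x}(\xi)$ with entries $\overline{h_r^{x}}(\xi + \tfrac{s}{p}) = \overline{Z_{\frac{1}{\beta}}g}\big(x+\tfrac{p}{q\beta}r,\, \beta\xi + \tfrac{\beta s}{p}\big)$ lie in $[A^{1/2},B^{1/2}]$ for almost every $\xi \in \td$. Entrywise complex conjugation does not change singular values (if $M = U\Sigma V^*$ then $\overline{M} = \overline{U}\Sigma\overline{V}^*$ with $\overline{U},\overline{V}$ unitary), so the singular values of $\A^x(\xi)$ coincide with those of $\Q(x,\xi)$ from the statement.

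Combining these two equivalences, $\G(g,\alpha,\beta)$ is a frame for $\lrd$ iff there exist uniform $A,B>0$ such that $\sigma(\Q(x,\xi)) \subseteq [A^{1/2},B^{1/2}]$ for a.e.\ $(x,\xi)\in \rd\times \rd$. The main subtle point will be the uniformity: Theorem~\ref{sec:zak-transform-addChar} supplies frame bounds for $L^2(\td)$ that are independent of $x$, and Lemma~\ref{l:c1} transfers these to singular-value bounds of $\A^x(\xi)$ that are independent of $\xi$; together these yield bounds on $\sigma(\Q(x,\xi))$ that are independent of both variables, which is exactly what the statement demands. Conversely, any such joint bound restricts to a uniform bound in $\xi$ for each fixed $x$, giving back the $L^2(\td)$-frame property with $x$-independent constants, as required by Theorem~\ref{sec:zak-transform-addChar}. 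A small bookkeeping task along the way will be to handle the measurability of the exceptional null set in $(x,\xi)$, but this follows by Fubini once the $x$-indexed families are handled as in the preceding section.
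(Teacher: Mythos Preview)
Your proposal is correct and follows essentially the same route as the paper: reduce to the $L^2(\td)$-frame condition via Theorem~\ref{sec:zak-transform-addChar}, use the quasi-periodicity of the Zak transform to rewrite the system in the form required by Lemma~\ref{l:c1}, and then apply that lemma. You have in fact been more careful than the paper's own proof by explicitly verifying the cardinality hypothesis $\card E_q \ge \card E_p$, observing that the conjugation in $\A^x(\xi)$ versus $\Q(x,\xi)$ does not affect singular values, and spelling out the uniformity of the bounds in both variables.
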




\begin{proof}
  By Theorem~\ref{sec:zak-transform-addChar}, $\G (g,\alpha, \beta ) $
  is a frame \fif\
  $\{ Z_{\frac{1}{\beta}}g(x+\alpha j, \beta \xi ): j \in \Z^d \} = \{
  e^{2\pi i p l \cdot \xi } Z_{\frac{1}{\beta}}g(x+\alpha r, \beta \xi
  ): l\in \zd , r \in E_q\} $
  is a frame for $L^2(\T^d)$.  Now, the claim follows from
  Lemma~\ref{l:c1} with the functions
  $h_r(\xi ) = Z_{\frac{1}{\beta}}g(x+\alpha r, \beta \xi )$.
\end{proof}

The Zak transform has been used frequently to derive theoretical
properties of Gabor frames.  The Zeevi-Zibulski matrices in particular
are very useful for computational issues, and several important
counter-examples have been discovered first through numerical tests
before being proved rigorously~\cite{MR3572909,MR3027914}. On the other hand, it
seems to be very difficult to apply directly and decide rigorously
whether a concrete Gabor system is a frame or not.

\section{Further Characterizations}

So far we have discussed characterization of Gabor frames that work
for arbitrary windows in $\lrd $. On a technical level, we have not
used more than the Poisson summation formula. Under mild additional
conditions that are standard in time-frequency analysis, one can prove
further characterizations for Gabor frames. These, however, require
additional and  more
advanced mathematical tools, such as spectral invariance, a
non-commutative version of Wiener's lemma or  Beurling's
method of weak limit. For this reason, we state  these  characterizations without proofs.

\subsection{The Wiener amalgam space and irrational lattices}
This condition refines Theorem~\ref{sec:rectangular-lattices-Char} for
irrational lattices. As the appropriate class of window we use the
Wiener amalgam space $W_0 = W(C,\ell ^1)$. It consists of all
continuous functions $g$ for which the norm
$$
\|g\|_W = \sum  _{k\in \zd } \sup _{x\in Q_1} |g(x+k)|
$$
is finite. 

\begin{thm}[~\cite{MR1921985}]
  Assume that $g\in W_0$ 
  and $\Lambda = \alpha \Z^d \times \beta \Z^d$ with
  $\alpha \beta \notin \Qf$.  Then $\G(g, \alpha, \beta)$ is a frame
  for $L^2(\Rd)$ \fif\ there exists some $x_0 \in Q_{\alpha}$ such
  that $\RS(x_0)$ is invertible on $\ell^2(\Z^d)$.
\end{thm}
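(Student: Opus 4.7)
The forward direction is immediate from Theorem~\ref{sec:rectangular-lattices-Char}: if $\G(g,\alpha,\beta)$ is a frame, then the equivalent condition (v) says $\sigma(\RS(x)) \subseteq [A,B]$ for a.e.\ $x$, so in particular the set of $x \in Q_\alpha$ with $\RS(x)$ invertible has full measure and is non-empty.

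For the nontrivial direction, assume $\RS(x_0)$ is invertible at some $x_0 \in Q_\alpha$, and set $A_0 := \|\RS(x_0)^{-1}\|^{-1}$, $B_0 := \|\RS(x_0)\|$, so that $A_0 I \leq \RS(x_0) \leq B_0 I$. The plan is to show that the \emph{same} operator inequalities hold for every $x$ in $Q_\alpha$, which by Theorem~\ref{sec:rectangular-lattices-Char}(v) forces $\G(g,\alpha,\beta)$ to be a frame. The proof rests on two independent ingredients: (a) a spectral invariance of $\RS(x)$ along a discrete orbit, and (b) the density of that orbit coupled with continuity of $x \mapsto \RS(x)$.

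For (a), a direct reindexing of the sum defining $\RS(x)$ shows that translation by $1/\beta$ in one coordinate corresponds to a shift of the row/column indices, i.e. $\RS(x + e_i/\beta) = S_i^{*}\, \RS(x)\, S_i$ where $S_i$ is the unitary translation operator on $\ell^2(\zd)$ in the $i$-th coordinate. Consequently, for every $m \in \zd$, $\RS(x_0 + m/\beta)$ is unitarily equivalent to $\RS(x_0)$ and therefore satisfies $A_0 I \leq \RS(x_0 + m/\beta) \leq B_0 I$. Since $\RS$ is $\alpha \zd$-periodic, these bounds hold on the orbit $\mathcal{O} := \{x_0 + m/\beta \bmod \alpha \zd : m \in \zd\} \subseteq Q_\alpha$. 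Because $\alpha\beta \notin \Qf$, the quantity $1/(\alpha\beta)$ is irrational, and the classical Kronecker--Weyl theorem implies that $\mathcal{O}$ is dense in $Q_\alpha$.

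For (b), I would establish that for $g \in W_0$ the map $x \mapsto \RS(x)$ is continuous from $\Rd$ into the bounded operators on $\ell^2(\zd)$ in operator norm. Via Schur's test it suffices to bound $\sup_k \sum_l |\RS(x)_{k,l} - \RS(y)_{k,l}|$ and the symmetric sum by a quantity tending to $0$ as $y \to x$. Writing each entry as a periodization of $g \cdot \overline{T_{(k-l)/\beta}g}$ and using $\|g\|_W = \sum_k \sup_{Q_1}|g(\cdot + k)| < \infty$ gives both the summable off-diagonal decay of $\RS(x) - \RS(y)$ and the uniform continuity in $x$, via a standard dominated-convergence argument in the Wiener amalgam norm. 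This is the main technical step: the whole point of choosing $W_0$ rather than an $L^p$-class is precisely to make these row/column sums converge uniformly, and I expect the verification of the Schur-norm continuity to be the one place where the hypothesis $g \in W_0$ is used in a nontrivial way.

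Combining (a) and (b) finishes the proof: for any $x \in Q_\alpha$, pick $x_n \in \mathcal{O}$ with $x_n \to x$; then $\RS(x_n) \to \RS(x)$ in operator norm, and the inequalities $A_0 I \leq \RS(x_n) \leq B_0 I$ pass to the limit, giving $A_0 I \leq \RS(x) \leq B_0 I$ uniformly in $x \in Q_\alpha$. By Theorem~\ref{sec:rectangular-lattices-Char}, $\G(g,\alpha,\beta)$ is a frame for $\lrd$.
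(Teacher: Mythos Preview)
The paper does not actually prove this theorem: it appears in Section~8, which explicitly collects results that ``require additional and more advanced mathematical tools, such as spectral invariance, a non-commutative version of Wiener's lemma or Beurling's method of weak limit'' and are therefore stated without proof, with attribution to~\cite{MR1921985}.

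Your argument is correct and is in fact essentially the proof given in the original reference~\cite{MR1921985}. The two ingredients you isolate are exactly the right ones: (a) the unitary equivalence $R(x+m/\beta)=S_m^{*}R(x)S_m$ is the elementary ``spectral invariance'' the paper alludes to, and combined with the $\alpha$-periodicity of $R$ and the irrationality of $\alpha\beta$ it gives uniform spectral bounds on the dense orbit $\{x_0+m/\beta \bmod \alpha\zd\}$; (b) norm-continuity of $x\mapsto R(x)$ for $g\in W_0$ then propagates these bounds to all of $Q_\alpha$. Your Schur-test sketch for (b) is the right approach; the only thing to make explicit is that translation is continuous in the $W(C,\ell^1)$-norm, i.e.\ $\|g-T_tg\|_W\to 0$ as $t\to 0$, which follows by a standard tail-plus-uniform-continuity argument and then feeds directly into the Schur bound. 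So nothing more sophisticated than density and continuity is needed here; the heavier machinery the paper mentions (non-commutative Wiener lemma, Beurling weak limits) is used for the \emph{other} theorems in Section~8, not this one.
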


Thus for irrational
lattices it suffices to check the invertibility of a single Ron-Shen
matrix $R(x)$ instead of all matrices. Although this condition looks
useful, it has not yet found any applications.

\subsection{Janssen's criterium without inequalities}

\begin{thm}[\cite{grrost17}] \label{thtp}
  Assume that $g\in W_0$ and $\Lambda = \alpha \zd \times \beta \zd
  $. Then $\G (g, \alpha  ,\beta ) $ is a frame if and only if the
  pre-Gramian $P(x)$ is one-to-one on $\ell ^\infty (\zd )$ for all
  $x\in \rd $. 
\end{thm}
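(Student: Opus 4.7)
The striking feature of this characterization is that a quantitative, $x$-uniform invertibility condition on $\ell^2$ is replaced by a qualitative, pointwise injectivity condition on $\ell^\infty$. The common thread will be the regularity $g \in W_0$: it forces $P(x)$ to be bounded on every $\ell^p(\zd)$ uniformly in $x$, it makes each entry of $P(x)c$ depend continuously on $x$, and it automatically gives $\G(g,\alpha,\beta)$ the Bessel property. My plan is to prove the two implications by very different tools.

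\emph{Forward direction.} Assume $\G(g,\alpha,\beta)$ is a frame and let $\gamma = S^{-1}g$ be the canonical dual. The key input is a spectral-invariance (Wiener-type) result: for $g \in W_0$ and $\G(g,\alpha,\beta)$ a frame, the canonical dual $\gamma$ also belongs to $W_0$. Relation~\eqref{eq:j9} then becomes an explicit left-inverse relation: defining $\Gamma(x)_{m,j} := \beta^{-d}\gamma(x+\alpha j - m/\beta)$, the substitution $x \mapsto x - m/\beta$ in \eqref{eq:j9} yields $\Gamma(x)P(x) = I_{\ell^2(\zd)}$ for a.e.~$x$. Since $g,\gamma \in W_0 \subseteq C(\rd)$, both sides are continuous in $x$, so the identity in fact holds for \emph{every} $x \in \rd$. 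Moreover, $\gamma \in W_0$ guarantees $\sup_x \|\Gamma(x)\|_{\ell^\infty \to \ell^\infty} < \infty$, so $P(x)$ has a bounded left inverse on $\ell^\infty(\zd)$ and is therefore injective there at every $x$.

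\emph{Reverse direction.} Suppose $P(x)$ is injective on $\ell^\infty(\zd)$ for every $x \in \rd$, and assume for contradiction that $\G(g,\alpha,\beta)$ is \emph{not} a frame. Since the Bessel bound is granted, condition~\eqref{item:ron-shen} of Theorem~\ref{sec:rectangular-lattices-Char} must fail in its lower estimate: there exist $c^{(n)} \in \ell^2(\zd)$ with $\|c^{(n)}\|\lnrm = 1$ and $x_n \in \rd$ with $\|P(x_n) c^{(n)}\|\lnrm \to 0$. The pre-Gramian satisfies $P(x + e_i/\beta) = P(x)\Sigma_i$, where $\Sigma_i$ denotes the column shift by $e_i$, so after replacing $x_n$ by $x_n - k_n/\beta$ and correspondingly shifting the index of $c^{(n)}$ by $k_n$, I may assume $x_n \in [0, 1/\beta]^d$ and, after passing to a subsequence, $x_n \to x_\star$. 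The unit ball of $\ell^\infty$ is weak-$*$ compact, so $c^{(n)} \to c$ weak-$*$ along a subsequence. The $W_0$ hypothesis furnishes a summable majorant $\sum_k \sup_{y\in[0,1/\beta]^d}|g(y+\alpha j - k/\beta)| < \infty$, so splitting $c^{(n)}_k\bar g(\cdot) - c_k\bar g(\cdot)$ and using dominated convergence together with the weak-$*$ testing against the fixed $\ell^1$-sequence $\bar g(x_\star+\alpha j - \cdot /\beta)$, one obtains $(P(x_\star)c)_j = \lim_n (P(x_n)c^{(n)})_j = 0$ for every $j$. The hypothesis then forces $c = 0$.

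\emph{Main obstacle.} The crux is to ensure that the weak-$*$ limit $c$ is \emph{nonzero}, since without this the argument only recovers $c=0$ and yields no contradiction. If one merely shifts $c^{(n)}$ so that its $\ell^\infty$-maximum sits at the origin, then $c_0 = \lim_n \|c^{(n)}\|_{\ell^\infty}$, which can vanish when the unit $\ell^2$-mass of $c^{(n)}$ is spread thinly over many coordinates. Overcoming this is precisely Beurling's weak-limit method: one renormalizes by $\|c^{(n)}\|_{\ell^\infty}$ while simultaneously controlling the ratio $\|P(x_n) c^{(n)}\|_{\ell^\infty}/\|c^{(n)}\|_{\ell^\infty}$, using the off-diagonal decay inherited from $g \in W_0$ and the Bessel bound to extract a nontrivial weak-$*$ profile at an appropriately translated base point. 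This concentration-type step, rather than the spectral-invariance input of the forward direction or the routine pointwise limit passage, is the technical heart of the proof, and it is the only step that genuinely exploits the quantifier ``for \emph{every} $x$'' in the hypothesis.
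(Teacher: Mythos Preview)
The paper does not prove this theorem: Section~8 explicitly announces that the characterizations there ``require additional and more advanced mathematical tools, such as spectral invariance, a non-commutative version of Wiener's lemma or Beurling's method of weak limit'' and are therefore ``state[d]\ldots without proofs''. There is thus no argument in the paper to compare against; the result is attributed to~\cite{grrost17}.

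That said, your outline lines up precisely with the tools the paper names. Your forward direction is correct and essentially complete: the Wiener-type spectral-invariance result $\gamma = S^{-1}g \in W_0$ is exactly what the paper alludes to, and once you have it, the matrix $\Gamma(x)$ built from $\gamma$ is a bona fide left inverse of $P(x)$ on $\ell^\infty(\zd)$ for every $x$ (the Fubini interchange needed to pass from $\ell^2$ to $\ell^\infty$ is justified by $g,\gamma\in W_0$). For the reverse direction you have correctly isolated the one substantive difficulty --- manufacturing a \emph{nonzero} $c\in\ell^\infty$ in the kernel of some $P(x_\star)$ from a sequence of approximate $\ell^2$-null vectors --- and correctly named Beurling's weak-limit method as the remedy. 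What you have not done is carry it out: the renormalization you describe (dividing by $\|c^{(n)}\|_{\ell^\infty}$) only works once you know that $\|P(x_n)c^{(n)}\|_{\ell^\infty}/\|c^{(n)}\|_{\ell^\infty}\to 0$, and extracting this from the $\ell^2$ information requires the actual concentration argument, not just its name. So your proposal is a faithful road map rather than a proof, which is in keeping with the paper's own treatment of the result.
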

Put differently,  to show that $\G (g,\alpha , \beta ) $ is a frame,
one has to show that
\begin{equation*}
  \sum _{k\in \zd } c_k g(x+\alpha j - \tfrac{k}{\beta}) = 0  \quad
  \Longrightarrow \quad  c \equiv 0 \, ,
\end{equation*}
with the added subtlety that $c$ is only a bounded sequence, but not necessarily
in $\ell ^2(\zd )$, as  is the case in 
Theorem~\ref{sec:rectangular-lattices-Char}.

In general it is easier
to verify the injectivity of  an operator than to prove its
invertibility, therefore Theorem~\ref{thtp} is a strong result. It has
been applied successfully for the study of totally positive windows of
Gaussian type in ~\cite{grrost17} and carries potential for further applications.

\subsection{Gabor frames without inequalities}
This group of conditions holds for arbitrary lattices and windows in
$M^1(\rd )$. As is well-known, the modulation space $M^1(\Rd)$ is a
natural condition in many problems in time-frequency analysis, because
it is invariant under the Fourier transform and many other
transformations. By choosing a suitable norm, $M^1(\Rd)$ becomes a
Banach space and its dual space $M^\infty(\Rd)$ consists of all
tempered distributions $f\in \Sc ' (\rd )$ that satisfy
$$
\sup _{z\in \rdd } |V_\varphi f(z) | < \infty $$
for some (or equivalently, for all) Schwartz functions $\phi $.

\begin{thm}[~\cite{MR2380004}]
  \label{sec:char-with-ineq-BigThm}
  Let $g \in M^1(\Rd)$ and $\Lambda \subseteq \Rtd$ be a lattice.
  Then the following are equivalent:
  \begin{enumerate}[(i)]
  \item $\G(g,\Lambda)$ is a frame for $L^2(\Rd)$, \ie $S_{g,\Lambda}$
    is invertible on $L^2(\Rd)$.
  \item The frame operator $S_{g,\Lambda}$ is one-to-one on $M^\infty(\Rd)$.
  \item The analysis operator $C_{g,\Lambda} : f \mapsto \big( \langle f,
    \pi (\lambda ) g \rangle \big) _{\lambda \in \Lambda }$  is one-to-one
    from $M^\infty(\Rd)$ to
    $\ell^\infty(\Lambda)$.
  \item The synthesis operator
    $D_{g,\Lambda^\circ} : c \mapsto \sum _{\lambda \in \Lambda ^\circ
    } c_\lambda \pi (\lambda )g$
    is one-to-one from $\ell^\infty(\Lambda^\circ)$ to
    $M^\infty(\Rd)$. \label{item: D injective}
  \item The Gramian operator $G_{g,\Lambda^\circ}$ is one-to-one on $\ell^\infty(\Lambda^\circ)$.
  \end{enumerate}
\end{thm}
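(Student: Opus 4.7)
The plan is to combine the already-proven duality theorem with the Gröchenig--Leinert spectral invariance theorem for the non-commutative Wiener algebra generated by time-frequency shifts $\{\pi(\lambda) : \lambda \in \Lambda\}$ with $\ell^1$-summable coefficients. Since $g \in M^1(\Rd)$, each of $S_{g,\Lambda}$, $C_{g,\Lambda}$, $D_{g,\Lambda^\circ}$, $G_{g,\Lambda^\circ}$ belongs (via Janssen expansion) to this algebra and is therefore bounded between the appropriate modulation/sequence spaces, so all five statements make sense. The crux is that spectral invariance forces the spectrum of such an operator to be the same whether it is viewed on $L^2$ or on $M^\infty$ (respectively $\ell^2$ or $\ell^\infty$), which is exactly what converts an injectivity hypothesis on the big space into $L^2$-invertibility.

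First I would dispose of (i)\,$\Rightarrow$\,(ii)--(v) and the equivalences among (ii)--(v). If (i) holds, Gröchenig--Leinert yields $\tilde{g} := S^{-1}g \in M^1$, so $S_{g,\Lambda}^{-1}$ lies in the same algebra and is bounded on $M^\infty$; thus $S_{g,\Lambda}$ is invertible on $M^\infty$, hence injective, giving (ii). The factorization $S_{g,\Lambda} = D_{g,\Lambda}C_{g,\Lambda}$ makes (ii)\,$\Leftrightarrow$\,(iii) immediate, since $Cf=0$ forces $Sf=0$. The Janssen representation $\mathrm{vol}(\Lambda)\,S_{g,\Lambda} = \sum_{\mu \in \Lambda^\circ}\langle g, \pi(\mu)g\rangle\pi(\mu)$ identifies $S_{g,\Lambda}$ with an element of the $\Lambda^\circ$-algebra whose invertibility on $L^2$ is equivalent, via the duality theorem of Section~4, to invertibility of $G_{g,\Lambda^\circ} = C_{g,\Lambda^\circ}D_{g,\Lambda^\circ}$ on $\ell^2(\Lambda^\circ)$; applying spectral invariance inside the $\Lambda^\circ$-algebra then lifts this to $\ell^\infty(\Lambda^\circ)$, yielding (i)\,$\Leftrightarrow$\,(v), and the factorization of $G_{g,\Lambda^\circ}$ gives (v)\,$\Leftrightarrow$\,(iv).

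The hard direction is therefore concentrated in the single implication \emph{injective on the big space $\Rightarrow$ invertible on $L^2$}. My plan is a Beurling-style weak-limit argument: assume (ii) holds but $S_{g,\Lambda}$ is not invertible on $L^2$. Since $S$ is positive self-adjoint and Bessel, $0\in \sigma_{L^2}(S)$, so there exist $f_n \in L^2$ with $\|f_n\|_{L^2}=1$ and $\|Sf_n\|_{L^2}\to 0$. Using the translation invariance of the $M^1$- and $M^\infty$-norms under $\pi(z)$ and the short-time Fourier transform, one renormalizes $f_n$ by time-frequency translates $\pi(z_n)^\ast f_n$ whose STFTs are forced to concentrate while their $M^\infty$-norms stay bounded below. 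Passing to a weak-$\ast$ limit in $M^\infty = (M^1)^\ast$ (possible because $S$ is bounded on $M^1$ and the unit ball of $M^\infty$ is weak-$\ast$ compact) produces a nonzero $f_\infty \in M^\infty$ with $Sf_\infty = 0$, contradicting (ii). This weak-limit step, which essentially re-derives the inverse-closedness of the Gabor algebra inside $B(M^\infty)$, is the principal obstacle; once it is in place, everything else is bookkeeping through the duality theorem and the factorizations $S = DC$ and $G = CD$ on the appropriate lattice.
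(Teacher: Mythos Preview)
The paper does not prove this theorem; Section~8 says explicitly ``we state these characterizations without proofs'' and only names the required tools (spectral invariance, a non-commutative Wiener lemma, Beurling's weak-limit method). Your proposal invokes precisely these ingredients, so in spirit you are aligned with what the paper indicates and with the original source.

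That said, your reduction to the single hard implication (ii)\,$\Rightarrow$\,(i) has a genuine gap. Spectral invariance gives that $S_{g,\Lambda}$ is \emph{invertible} on $L^2$ iff it is invertible on $M^\infty$, and likewise for $G_{g,\Lambda^\circ}$ on $\ell^2$ versus $\ell^\infty$; but (ii)--(v) assert only \emph{injectivity}, which is weaker, so the chain ``(i)\,$\Leftrightarrow$\,$G$ invertible on $\ell^2$\,$\Leftrightarrow$\,$G$ invertible on $\ell^\infty$'' yields (i)\,$\Rightarrow$\,(v) but not (v)\,$\Rightarrow$\,(i). Likewise the factorizations $S=DC$ and $G=CD$ give only the one-way implications (ii)\,$\Rightarrow$\,(iii) and (v)\,$\Rightarrow$\,(iv): the converses fail because for $f\in M^\infty\setminus L^2$ one cannot use $\langle Sf,f\rangle=\|Cf\|_{\ell^2}^2$, and $D_{g,\Lambda}$ (respectively $C_{g,\Lambda^\circ}$) need not be injective on the $\ell^\infty$-level. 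After your weak-limit proof of (ii)\,$\Rightarrow$\,(i), the items (iii), (iv), (v) therefore remain dead ends. Each of (iii)\,$\Rightarrow$\,(i), (iv)\,$\Rightarrow$\,(i), (v)\,$\Rightarrow$\,(i) requires its own Beurling-type argument --- with renormalizing shifts taken from the appropriate lattice ($\Lambda$ for $S,C$; $\Lambda^\circ$ for $G,D$) so that the operator commutes with them --- to manufacture a nonzero null vector in $M^\infty$ or $\ell^\infty$ from an approximate null sequence in $L^2$ or $\ell^2$. That is exactly the substance of the cited paper, and it is more than the ``bookkeeping'' you suggest.
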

Conceptually, it seems easier to verify that an operator is one-to-one,
therefore one may hope that these conditions will become useful when
research on Gabor frames will move from rectangular lattices towards
arbitrary ones.


\end{document}